\providecommand{\U}[1]{\protect\rule{.1in}{.1in}}
\renewcommand{\ll}{{\ell}}
\newcommand{\be}[1]{\begin{equation}\label{#1}}
\newcommand{\ee}{\end{equation}}
\let\pa\partial
\let\r\rho
\newtheorem{thm}{Theorem}
\newtheorem{prop}{Proposition}
\newtheorem{rem}{Remark}
\newtheorem{cor}{Corollary}
\newtheorem{assumption}{Assumption}
\newcommand{\beq}{\begin{eqnarray}}
\newcommand{\eeq}{\end{eqnarray}}
\newcommand{\beqs}{\begin{eqnarray*}}
\newcommand{\eeqs}{\end{eqnarray*}}
\newcommand{\bequ}{\begin{equation}}
\newcommand{\eequ}{\end{equation}}
\def\r{\rho}
\def\T{{\cal T}}
\def\M{{\cal M}}
\newcommand{\cal}{\mathcal}
\def\e{\varepsilon}
\newtheorem{lemma}{Lemma}[]
\let\pa\partial
\newcommand{\corr}[1]{\textcolor{black}{#1}}
\begin{document}
\title[]{Global well-posedness for the primitive equations coupled to
nonlinear moisture dynamics with phase changes}

\author{Sabine Hittmeir}
\address[S. Hittmeir]{Fakult\"at f\"ur Mathematik, Universit\"at Wien, Oskar-Morgenstern-Platz 1, 1090 Wien, Austria}
\email{sabine.hittmeir@univie.ac.at}
\author{Rupert Klein}
\address[R. Klein]{FB Mathematik \& Informatik, Freie Universit\"at Berlin, Arnimallee 6, 14195 Berlin, Germany}
\email{rupert.klein@math.fu-berlin.de}
\author{Jinkai Li}
\address[J. Li]{South China Research Center for Applied Mathematics and Interdisciplinary Studies, South China Normal University, Guangzhou 510631, China.}
\email{jklimath@m.scnu.edu.cn; jklimath@gmail.com}
\author{Edriss S. Titi}
\address[E. S. Titi]{Department of Mathematics, Texas A{\&}M University, College Station,  TX 77840, USA.  Department of Applied Mathematics and Theoretical Physics,
University of Cambridge, Cambridge CB3 0WA, UK. Department of Computer Science and Applied Mathematics, Weizmann Institute of Science,
Rehovot 76100, Israel.} \email{titi@math.tamu.edu; Edriss.Titi@damtp.cam.ac.uk}

\keywords{well-posedness for nonlinear moisture dynamics; primitive equations; moisture model for warm clouds with phase transition}.
\subjclass[2010]{ 35A01,
35B45, 35D35, 35M86, 35Q30, 35Q35, 35Q86, 76D03, 76D09, 86A10}

\maketitle

\begin{center}
  {March 20, 2020}
\end{center}

\begin{abstract}
In this work we study the global solvability of the primitive
equations for the atmosphere coupled to moisture dynamics with phase
changes for warm clouds, where water is present in the form of
water vapor and in the liquid state as cloud water and rain
water. This moisture model contains closures for the phase changes
condensation and evaporation, as well as the processes of
autoconversion of cloud water into rainwater and the collection of
cloud water by the falling rain droplets. It has been used by Klein
and Majda in \cite{KM} and corresponds to a basic form of the bulk
microphysics closure in the spirit of Kessler \cite{Ke} and Grabowski
and Smolarkiewicz \cite{GS}. The moisture balances are strongly
coupled to the thermodynamic equation via the latent heat associated to the
phase changes. In \cite{HKLT} we assumed  the
velocity field to be given and proved rigorously the global existence
and uniqueness of uniformly bounded solutions of the moisture balances
coupled to the thermodynamic equation.
In this paper we present the solvability of a full moist
atmospheric flow model, where the moisture model is coupled to the primitive equations of atmospherical dynamics governing the velocity field. For the derivation of a priori estimates for the velocity field we thereby use the ideas of Cao and Titi \cite{CT}, who succeeded in proving the global solvability of the primitive equations.
\end{abstract}

\allowdisplaybreaks
\section{Introduction}
Moisture and precipitation still cause major uncertainties in numerical weather prediction models and it is our aim here to develop further the rigorous analysis of atmospheric flow models. In a preceding paper \cite{HKLT} we assumed the velocity field to be given and studied the moisture model for water vapor, cloud water and rain water  coupled to the  thermodynamic equation through the latent heat in the setting of
Klein and Majda \cite{KM} corresponding to a basic form of a bulk microphysics model in the spirit of Kessler \cite{Ke} and Grabowski and Smolarkiewicz \cite{GS}.  In this work we couple the moisture dynamics to the primitive equations of the atmospheric dynamics by taking over the ideas of Cao and Titi \cite{CT} for their recent breakthrough on the global solvability of the latter system. Moreover, cases of  partial viscosities and diffusions, arising from the asymptotical analysis in \cite{KM}, will be analyzed in a future work capitalizing on the results by Cao et al. \cite{CLT,CLT1,CLT2}.

A study of a moisture model coupled to the primitive equations has already been carried out by Coti Zelati et al. in \cite{CZH}.  The moisture model there consists of one moisture quantity coupled to temperature and contains only the process of condensation
during upward motion, see e.g. \cite{HW}. Since the source term there is modeled via a Heavy side
function as a switching term between saturated and undersaturated regions, the analysis, however, requires elaborate techniques. Coti Zelati et al. in \cite{BCZ,CZF,CZT} therefore used an approach based on differential inclusions and variational techniques, which have then been coupled to the primitive equations in \cite{CZH}.

\corr{The moisture model we are analyzing here is physically more refined and consists of three moisture quantities for  water vapor, cloud water and rain water. It contains besides the phase changes condensation and evaporation also the autoconversion of cloud water to rain water after a certain threshold is reached, as well as  the collection of cloud water by the falling rain droplets.}

In the remainder of the introduction we first state the moisture model  in pressure coordinates, which have the advantage that under the assumption of hydrostatic balance the continuity equation takes the form of the incompressibility condition.

\subsection{Governing equations}
Solvability of the full geophysical governing equations (without moisture) is a long standing problem. Assuming hydrostatic balance
\beq
\label{hydbal}
\frac{\pa p}{\pa z} = - g \r \,,
\eeq
where  $g$ denotes the graviational acceleration, the equations reduce to the well-known primitive equations and only recently the global well-posedness of strong solutions could be proven by Cao and Titi \cite{CT} for the incompressible ocean dynamics.  The density of air $\r$ in the atmosphere in comparison to the ocean  however varies strongly with height, and the incompressibility assumption is only justified when describing shallow phenomena. Thus for the atmosphere in general the full compressible governing equations need to be considered. However, under the assumption of hydrostatic balance \eqref{hydbal}, which in particular guarantees the pressure to decrease monotonically with height, the pressure $p$ can be used as the vertical coordinate.
Switching to the pressure coordinates $(x,y,p)$ has the main advantage that the continuity equation takes the form of  the  incompressibility condition
\beq\label{div}
\pa_x u + \pa_y v + \pa_p \omega=0\,\qquad \textnormal{where} \quad \omega =\frac{dp}{dt}\,,\eeq
see, e.g., Lions et al.\,\cite{LTW} and  Petcu et al.\,\cite{PTZ}. Thus the ideas of Cao and Titi \cite{CT} can be taken over for the atmospheric primitive equations in pressure coordinates, as we shall also see below and we therefore work in the following with the governing equations in the pressure coordinates and use hereafter the notation
\beqs
 \mathbf{u}=(u,v)\,,\qquad &\nabla_h=(\pa_x,\pa_y)\,,\qquad & \Delta_h=\pa_x^2+\pa_y^2\,.
\eeqs
We note that the vertical velocity $\omega$ in pressure coordinates takes the opposite sign as the vertical velocity in the cartesian coordinates, i.e., $\omega<0$ for upward motion and $\omega>0$ for downward motion. Moreover the  derivatives as well as the  velocity components have different units in vertical and horizontal directions. Nevertheless the total derivative  in pressure coordinates reads
\beq\label{tot.p}
\frac{d}{dt}&=&\pa_t + \mathbf{u}\cdot \nabla_h + \omega\pa_p \,.
\eeq
For the eddy viscosity closure of turbulence and molecular transport we use
\beq\label{tot.diff}
{\cal D}^* &=& \mu_*\Delta_h+ \nu_*\pa_p\left(\left(\frac{g p }{R \bar T}\right)^2\pa_p\right),
\eeq
where $\bar T=\bar T(p)$ corresponds to some background distribution of the temperature,
being uniformly bounded from above and below, and $R$ is the individual
gas constant. The operator ${\cal D}^*$  thereby provides a
close approximation to the full Laplacian in cartesian coordinates, see
also \cite{LTW,PTZ}.
The governing equations in  pressure coordinates $(x,y,p)$ with corresponding velocities $(\mathbf{u},\omega)$ become
\beq
&&\pa_t\mathbf{u}+(\mathbf{u}\cdot \nabla_h)\mathbf{u} + \omega \pa_p\mathbf{u} + \nabla_h \Phi +f (\mathbf{k}\times\mathbf{v})_h={\cal D}^{\mathbf{u}}\mathbf{u},\label{equ.1}\\
&&\pa_p \Phi + \frac{R T}{p}=0,\label{equ.2}\\
&&\label{inc.v} \nabla_h\cdot\mathbf{u}+\pa_p\omega =0,\label{equ.3}
\eeq
where $\Phi$ denotes the geopotential $\pa_z \Phi =g$, and the second equation combines the ideal gas law (see \eqref{ideal} below) and the hydrostatic balance
equation \eqref{hydbal}. Here $\mathbf{v}:=(\mathbf{u},\omega)$, $\mathbf{k}=(0,0,1)$, and $(\mathbf{k}\times\mathbf{v})_h$ are the first two components of $\mathbf{k}\times\mathbf{v}$.
Now the density $\r$ does not appear in the system anymore, which is connected to the other thermodynamic quanitities via
 the ideal gas law
\beq\label{ideal}
p=R \r T\,.
\eeq
A typical measure for quantification of moisture are mixing ratios, which compare the density of the moisture component to the density of dry air (denoted as $\r_d$). We assume to be in the warm cloud regime, where no ice and snow phases occur and water is therefore present in the form of water vapor (with density $\r_v$) and in the liquid state as cloud water and rain water (with corresponding densities $\r_c$ and $\r_r$), such that we have the mixing ratios
\begin{equation}\label{qv}
q_{v}=\frac{\r_v}{\r_d}\,, \qquad  q_{c}=\frac{\r_c}{\r_d}\,,\qquad q_{r}=\frac{\r_r}{\r_d}\,.
 \end{equation}
For these mixing ratios we then have the  moisture balances
\beq
\label{eq.qv}\frac{d q_v}{dt}  &=& S_{ev} - S_{cd}+\cal{D}^{q_v} q_v\,,\\
\label{eq.qc}\frac{d q_c}{dt} &=& S_{cd} - S_{ac}- S_{cr}+\cal{D}^{q_c} q_c\,,\\
\label{eq.qr}\frac{d q_r}{dt}+V\pa_p\left(\frac{p}{R_d \bar{T}} q_r\right) &=& S_{ac} + S_{cr}- S_{ev}+\cal{D}^{q_r} q_r\,,
\eeq
where the total derivative is given according to \eqref{tot.p} and the diffusion terms are as in \eqref{tot.diff}.
The source terms  $S_{ev}, S_{cd}, S_{ac}, S_{cr}$ are, respectively, the rates of evaporation of rain water, the condensation of water vapor to cloud water and
the inverse evaporation process, the auto-conversion of cloud water into rainwater by accumulation of microscopic droplets,
and the collection of cloud water by falling rain. Moreover $V$ denotes the terminal velocity of falling rain and is assumed to be constant.

The thermodynamic equation accounts for the diabatic source and sink terms, such as latent heating, radiation effects,
but we will in the following only focus on the effect of latent heat in association with phase
changes (see, e.g., \cite{CZF,CZH,HKLT,KM}).
The temperature equation in pressure coordinates then reads, see, e.g., \cite{C,HK},
\beq
\label{eq.T}\frac{dT}{dt}-\frac{R}{c_p}\frac{T}{p} \omega =\frac{L}{c_p}(S_{cd}-S_{ev})+ {\cal D}^TT\,,
\eeq
where the heat capacity $c_p$ and the latent heat $L$ are assumed to be constant.

To describe the state of the atmosphere a common thermodynamic quantity used instead of the temperature is the potential temperature
\begin{equation}\label{PT}
\theta = T \left(\frac{p_0}{p}\right)^\kappa\,, \qquad \textnormal{where} \qquad \kappa = \frac{ R}{c_{p}}\,.
\end{equation}
The potential temperature has the main advantage that the left-hand side of \eqref{eq.T} simply reduces to
$\frac{T}{\theta}\frac{d}{dt}\theta$. This property was essential in the
preceding works \cite{CZT} and \cite{HKLT} to derive a priori
nonnegativity and boundedness of the temperature and moisture
components.


\begin{rem}
In the present model the difference of the gas constants for dry air and water vapor as well as the  dependence of the internal energy on the moisture components via different heat capacities  is neglected. These additional terms that would arise in a more precise thermodynamical setting are small in principle and therefore often not taken into account.
\corr{It has been, however, revealed in \cite{HK} that, e.g., in the presence of deep convective clouds the refined thermodynamical setting can be essential. The according moisture model has a much stronger coupling of the thermodynamic equation to the moisture components and will be investigated in a forthcoming paper.}
\end{rem}

\subsection{Explicit expressions for the source terms}
The threshold for phase changes is saturation, which is defined via the saturation mixing ratio $q_{vs}$. Saturation thereby is reached when $q_v=q_{vs}$, whereas the air is undersaturated if $q_v<q_{vs}$ and oversaturated if $q_v>q_{vs}$, respectively.
For the given function $q_{vs}$ we pose the natural assumption to depend continuously on $p$ and $T$ and to vanish below and above some critical temperatures (given in Kelvin), i.e.
\beq
\label{cond.qvs}
\label{nonneg.qvs}
 q_{vs}(p,T)=0 \quad \textnormal{for} \quad  T \leq T_{A} \quad \textnormal{and} \quad T\geq T_{B} \,,
\eeq
for some $0\leq T_A\leq  T_B\,$, which is helpful for proving nonnegativity of the solution. Moreover we assume $q_{vs}$ to be nonnegative, uniformly bounded and to be Lipschitz continuous with respect to $T$, i.e., we assume
\beq\label{LC}
|q_{vs}(p,T_1)-q_{vs}(p,T_2)|\leq C|T_1-T_2|,
\eeq
for a positive constant $C$.  This constant actually depends on the pressure as it grows approximately inversely proportional to $p$. We are however only interested in the lower part of the atmosphere, where weather related phenomena are taking place. There the pressure is uniformly bounded from below (e.g. by 100\,hPa), such that the constant $C$ in \eqref{LC} can be assumed to be positive and uniformly bounded.  For more details we refer also to \cite{HKLT}.

For the source terms of the mixing ratios we take over the setting of Klein and Majda \cite{KM} corresponding to a basic form of
the bulk microphysics closure in the spirit of Kessler \cite{K} and Grabowski and
Smolarkiewicz \cite{GS}, which has also been used in the preceding work \cite{HKLT}:
\beq
S_{ev}&=&C_{ev}T (q_r^+)^\beta(q_{vs}-q_v)^+\,,\qquad \beta \in (0,1],\label{Sev}\\
S_{cr}&=&C_{cr} q_c q_r,\qquad \\
S_{ac}&=&C_{ac} (q_c-q_{ac}^*)^+, \\
\label{Scd}
S_{cd}&=&C_{cd}(q_v-q_{vs})q_c  +C_{cn}(q_v-q_{vs})^+\,,
\eeq
where $C_{ev},C_{cr},C_{ac},C_{cd},C_{cn}$ are dimensionless rate constants. Moreover, $(g)^+=\max\{0,g\}$
and $q_{ac}^*$ denotes the threshold for cloud water mixing ratio beyond which autoconversion of cloud water into precipitation becomes active.

Exponents $\beta \in (0,1)$ cause difficulties in the analysis, in particular for the uniqueness of the solutions. This problem however was overcome in \cite{HKLT} by introducing new unknowns, which allow for certain cancellation properties of the source terms and reveal advantageous monotonicity properties. \corr{This procedure in particular relies on the fact, that the evaporation constitutes a sink in the equation for temperature, i.e. for positive temperatures $T$ the term $S_{ev}$ is nonnegative and arises with a negative sign in the thermodynamic equation.}

The rest of this paper is arranged as follows. In section
\ref{sec.main}, we formulate the full problem with boundary
conditions and state the main results on the global existence and
uniqueness of solutions. In section \ref{sec.apx},
we prove the existence
and uniqueness, and the uniform a priori estimates of solutions to an
approximate system, which is nothing but the original one by
replacing $S_{ev}$ (which may be only H\"older continuous in $q_r$)
by $S_{ev,\varepsilon}$ \corr{(see \eqref{Sev.eps} below)} which is Lipschitz in $q_r$. In section
\ref{sec.pf}, based on the results in section \ref{sec.apx}, we give the
proof of the global existence result, and the uniqueness is also
established by using the idea in our previous work \cite{HKLT}.

Throughout this paper, unless explicitly specified, we use $C$
to denote a generic positive constant depending only on the given
functions in the boundary conditions, the initial data, and the physical
parameters appearing in the original system (but not on the parameter
$\varepsilon$ arising in the approximate system introduced in the next section).

\section{Formulation of the problem and main results}\label{sec.main}

Recall the momentum equation
\begin{equation}\label{eq.u}
\pa_t\mathbf{u}+(\mathbf{u}\cdot \nabla_h) \mathbf{u} +\omega \pa_p \mathbf{u} +  f (\mathbf{k}\times\mathbf{v})_h+\nabla_h \Phi ={\cal D}^{\mathbf{u}}\mathbf{u},
\end{equation}
where $\mathbf{v}=(\mathbf{u},\omega)$, $\mathbf{k}=(0,0,1)$,
and $(\mathbf{k}\times\mathbf{v})_h$ are the first two components of $\mathbf{k}\times\mathbf{v}$. According to the incompressibility condition (\ref{inc.v}) and the boundary condition (\ref{bound.0}), we have a diagnostic equation for $\omega$
\begin{equation}
\label{eq.omega}
\omega(t,x,y,p)=\int_p^{p_0}\nabla_h\cdot \mathbf{u}(t,x,y,s) ds\,.
\end{equation}
By the hydrostatic balance (\ref{equ.2}), we have
\begin{equation}
\Phi(t,x,y,p)=\Phi_s(x,y,t) + \int_p^{p_0} \frac{R}{\sigma}T(t,x,y,\sigma) d\sigma.\label{eq.Phi}
\end{equation}

\corr{For analyzing cloudy air phenomena either numerically or analytically a bounded domain of cylindrical form is a natural choice, see also \cite{CZF,HKLT}, and we thus consider a domain ${\M}$ defined as}
\beqs
{\M}=\{(x,y,p)\,|\, (x,y) \in {\M}', p\in (p_1,p_0)\}\,,
\eeqs
where ${\M}'$ is a smooth bounded domain in $\mathbb{R}^2$, and $0<p_1<p_0$. The boundary is given by
\beqs
&&\Gamma_0=\M'\times\{p_0\},\quad \Gamma_1=\M'\times\{p_1\},\quad \Gamma_\ell=\partial\mathcal M'\times(p_1,p_0).
\eeqs
The boundary conditions are:
\beq
\label{bound.0}&\Gamma_0:\ & \pa_p\mathbf{u}=-\alpha_{\mathbf{u}}\mathbf{u}\,,\quad \omega=0\,,\quad \pa_pT=\alpha_{0T}(T_{b 0}-T)\,,\qquad \nonumber\\
&&  \pa_pq_{j}=\alpha_{0j} (q_{b 0 j}-q_j) \,, \quad \text{for }j\in\{v,c,r\}\,,\qquad\\
&\Gamma_1:\ & \pa_p\mathbf{u}=0\,,\quad  \omega=\pa_pT=\pa_pq_j=0\,,\quad j\in \{v,c,r\}\,, \label{bound.1}\\
\label{bound.ll}&\Gamma_{\ll}:\ &\mathbf{u}\cdot \mathbf{n}=0\,,\quad \partial_n\textbf{u}\times\textbf{n}=0\,,\quad \pa_{\mathbf{n}}T=\alpha_{{\ll}T}(T_{b \ll}-T)\,,\nonumber\\
&&\pa_nq_{j}=\alpha_{{\ll} j} (q_{b {\ll} j}-q_j) \,, \quad \text{for }j\in\{v,c,r\},
\eeq
where $\alpha_{0 j }, \alpha_{\ll j}, \alpha_{0 T }, \alpha_{\ll T}$ are
given nonnegative constants, and $T_{b 0 }, T_{b \ll},  q_{b 0 j },
q_{b \ll j }$, which can depend on time, are given nonnegative and sufficiently smooth functions. Here
$\mathbf{n}$ denotes the outward normal direction on $\partial\mathcal
M'$. Note that the boundary conditions (\ref{bound.0})--(\ref{bound.ll})
reduce to those in \cite{CZH} if $\alpha_{\ll T}$, $\alpha_{{\ll} v}$,
$\alpha_{{\ll} c}$, and $\alpha_{{\ll} r}$ are set tozero.
The initial condition is
\begin{equation}
  \label{IC}
(\mathbf{u}, T, q_v, q_c, q_r)|_{t=0}=(\mathbf{u}_0, T_0, q_{v0}, q_{c0}, q_{r0}).
\end{equation}


Throughout this paper, we use the abbreviation
\[\|f\|=\|f\|_{L^2(\M)}\,,\qquad \|f\|_{L^p}=\|f\|_{L^p(\M)}\,.\]
According to the weight in the vertical diffusion terms, we  introduce the weighted norm
\[\|f\|_w=\Big\|\left(\frac{g p}{R_d\bar T}\right) f\Big\|\,,\]
which, since the weight $\frac{gp}{R_d\bar T}$ is uniformly bounded from above and below by positive constants, is equivalent to the $L^2$-norm.
Moreover, we shall often use for convenience the notation
\[\|(f_1,\dots,f_n)\|^2=\sum_{j=1}^n\|f_j\|^2\]
We state our main result on the global existence and uniqueness of solutions to the fully coupled  system in the following:

\begin{thm}
\label{thm}
Assume that $\mathbf{u}_0, T_0, q_{v0}, q_{c0}, q_{r0}\in H^1
(\mathcal M)$ and $T_0,q_{v0}, q_{c0}, q_{r0}\in L^\infty(\mathcal M)$, with $T_0$, $q_{v0}$, $q_{c0}$,
$q_{r0}\geq0$ in $\mathcal M$ and
$\int_{p_0}^{p_1}\nabla_h\cdot\mathbf{u}_0dp=0$ on $\mathcal M'$. Then, system \eqref{eq.qv}--\eqref{eq.T}, \eqref{eq.u}--\eqref{eq.Phi},
subject to \eqref{bound.0}--\eqref{IC}, has a unique global
in time solution $(\mathbf{u}, T, q_v, q_c, q_r)$, satisfying
\begin{eqnarray*}
&T, q_v, q_c, q_r\geq0 \quad\textnormal{and} \quad  T,q_v,q_c,q_r\in L^\infty(0,\mathcal T; L^\infty(\M)),\\
&\mathbf{u}, T, q_v, q_c, q_r\in C([0,\mathcal T]; H^1(\mathcal M))\cap L^2(0,\mathcal T; H^2(\mathcal M)),
\\
&\partial_t\mathbf{u},\partial_tT, \partial_tq_v, \partial_tq_c, \partial_tq_r\in L^2(0,\mathcal
T; L^2(\mathcal M)),
\end{eqnarray*}
for any $\mathcal T\in(0,\infty)$.
\end{thm}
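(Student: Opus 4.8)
The plan is to prove Theorem \ref{thm} by the standard program for the primitive equations à la Cao–Titi, adapted to the moisture coupling, and split into three stages: (i) construction and a priori estimates for a regularized system, (ii) passage to the limit to obtain a global strong solution, and (iii) uniqueness. For stage (i) I would first replace $S_{ev}$, which is only $\beta$-Hölder in $q_r$, by a Lipschitz approximation $S_{ev,\varepsilon}$ (announced as \eqref{Sev.eps}); for the resulting system a local strong solution exists by a fixed-point argument using the parabolic smoothing in $\mathcal D^*$ and the diagnostic formulas \eqref{eq.omega}, \eqref{eq.Phi} for $\omega$ and $\Phi$. The heart of the matter is the sequence of global-in-time a priori bounds, uniform in $\varepsilon$, which I would derive in the following order. \emph{Nonnegativity and $L^\infty$ bounds for the moisture and temperature:} testing the $q_j$-equations with the negative parts and using that $q_{vs}$ vanishes outside $[T_A,T_B]$, together with the structure of the source terms, yields $q_v,q_c,q_r,T\ge 0$; then, passing to the potential temperature $\theta$ so that the left side of \eqref{eq.T} becomes $\frac{T}{\theta}\frac{d\theta}{dt}$, and combining $\theta$ with suitable linear combinations of the $q_j$'s (exactly the "new unknowns" from \cite{HKLT}) to cancel the sign-indefinite source contributions, gives uniform $L^\infty$ bounds on $q_v,q_c,q_r,\theta$, hence on $T$. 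This is where I expect the interplay with \cite{HKLT} to do the real work, and it is the step that most depends on the precise algebraic form of the closures \eqref{Sev}--\eqref{Scd}.

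\emph{Energy estimates for the velocity:} with $T$ (hence $\Phi$ via \eqref{eq.Phi}) already controlled in $L^\infty_t L^2_x$, I would run the Cao–Titi argument on $\mathbf u$: the basic $L^2$ energy estimate, then the crucial $L^6$ (or $\|\mathbf u\|_{L^p}$, $p$ large) estimate exploiting the hydrostatic structure and the fact that $\omega$ is the vertical antiderivative of $\nabla_h\cdot\mathbf u$, followed by the $H^1$ estimate split into the bound on $\|\partial_p\mathbf u\|$ and then $\|\nabla_h\mathbf u\|$, using anisotropic Ladyzhenskaya-type inequalities to absorb the convective terms. The source-free momentum equation here is essentially identical to \cite{CT} (with the Coriolis term harmless and the temperature entering only through the known $\nabla_h\Phi$), so I would cite \cite{CT} for the structural inequalities and only track the moisture-dependent forcing through the temperature. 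Once $\mathbf u\in L^\infty_t H^1\cap L^2_t H^2$ is in hand, I would bootstrap: go back to the $T$ and $q_j$ equations, now linear transport–diffusion equations with an $H^1\cap L^2_tH^2$ drift and bounded right-hand sides, and obtain $T,q_j\in L^\infty_t H^1\cap L^2_t H^2$ by testing with $-\Delta_h$ and the vertical diffusion operator; the time-derivative bounds in $L^2_tL^2_x$ then follow by reading them off from the equations.

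For stage (ii), the uniform-in-$\varepsilon$ bounds plus Aubin–Lions compactness give a subsequence converging to a limit which, since $S_{ev,\varepsilon}\to S_{ev}$ locally uniformly and $q_r^{(\varepsilon)}$ converges strongly in $L^2_tL^2_x$, solves the original system; the claimed regularity class is inherited by lower semicontinuity, and continuity in time into $H^1$ follows from the parabolic regularity together with $\partial_t(\cdot)\in L^2_tL^2_x$. For stage (iii), uniqueness, I would take the difference of two solutions and estimate in $L^2$; the velocity difference is handled exactly as in \cite{CT}, while for the moisture–temperature part the obstacle is again the $\beta$-Hölder term $S_{ev}$ in $q_r$, which is not Lipschitz. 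Here I would reuse the device from \cite{HKLT}: change unknowns so that the offending term appears with a favorable sign (using that $S_{ev}\ge 0$ enters the temperature equation with a minus sign and the $q_r$-equation with a minus sign), so that in the energy estimate for the difference it produces a term of the correct sign that can simply be discarded rather than estimated, after which Grönwall closes the argument. The main obstacle throughout is thus the non-Lipschitz evaporation term, which is isolated by the $\varepsilon$-regularization for the existence part and defused by the \cite{HKLT} change of variables for the uniqueness part; the Cao–Titi velocity estimates, while technically the longest, are essentially imported.
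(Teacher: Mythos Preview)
Your proposal is correct and follows essentially the same route as the paper: $\varepsilon$-regularize $S_{ev}$, establish uniform-in-$\varepsilon$ bounds in the order $L^\infty(T,q_j)\to$ basic energy $\to$ Cao--Titi $L^6/\partial_p\mathbf u/\nabla_h\mathbf u$ hierarchy $\to$ $H^1(T,q_j)$, pass to the limit via Aubin--Lions, and prove uniqueness with the \cite{HKLT} auxiliary unknowns $Q=q_v+q_r$ and $H=T-\tfrac{L}{c_p}(q_c+q_r)$. The only subtlety your outline understates is that the Cao--Titi inequalities for $\mathbf u$, as imported from \cite{CT}, carry $\|\nabla_h T\|^2$ on the right-hand side through the geopotential, so an $L^2_tH^1$ estimate on $T$ (which in turn needs the basic energy on $\mathbf u$ for its Gr\"onwall) must be inserted between the basic energy and the $L^6$ step---forcing at least one back-and-forth between the velocity and thermodynamic estimates; the paper interleaves even further by coupling $\nabla_h\mathbf u$ and $\nabla_h q_j$ in a single Gr\"onwall (the convective term $\omega\partial_p q_j$ tested against $\Delta_h q_j$ produces $\|\Delta_h\mathbf u\|^2$), but your sequential ``close $\mathbf u\in L^2_tH^2$ first, then treat $\|\Delta_h\mathbf u\|^2$ as $L^1_t$ forcing in the $q_j$ estimate'' works just as well.
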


\corr{Some comments concerning
the proof of Theorem \ref{thm} are given in order. For the global existence of solutions, the key is to
get appropriate a priori estimates for the solutions, which are mainly obtained by adopting the ideas from \cite{HKLT} for the moisture model with given velocity field and the work of Cao and Titi \cite{CT}, who proved global well-posedness for the primitive equations for the ocean. In particular, we use as in \cite{CT} the ideas of decomposing the velocity into the barotropic (vertically averaged) and the baroclinic (the according deviation) components and using the Ladyzhenskaya type inequality (see Lemma \ref{lemlad} in the Appendix) to derive the $L^\infty(0,\mathcal T; L^6(\mathcal M))$ of the horizontal velocity, see the proof of Proposition \ref{est.u.pu}, in the below. Besides, similar to \cite{CT}, due to the anisotropic property of the system (\ref{eq.u})--(\ref{eq.Phi}), we also use anisotropic treatments to the horizontal derivatives and vertical derivatives, that is the a priori estimates for the vertical derivatives are carried out separately before working on the horizontal ones. Furthermore, as already mentioned before, we use the idea of introducing new
unknowns and making use of cancellations as in \cite{HKLT} to overcome the difficulty cased by the exponent $\beta\in(0,1)$ in the source term $S_{ev}$ to prove the uniqueness. Yet there are some technical differences of arguments in the current paper comparing with
those in \cite{CT} and \cite{HKLT}, due to the presence of the antidissipative term $\frac{R}{c_p}\frac{T}{p}\omega$ and the latent heating term $\frac{L}{c_p}(S_{cd}-S_{ev,\varepsilon})$ in the thermodynamic equation (\ref{eq.T}), as well as the coupling of the momentum equations to the moisture system through the transport terms. The antidissipative term makes the $L^1$ type estimate for $T$, i.e., Proposition \ref{basic}, in the below,
become a necessary step to get the further estimates, while
the latent heat provides a strong coupling to the moisture dynamics, which are in turn transported with the air velocity. These additional challenges due to the strong coupling of all solution components are overcome by careful derivations of a priori estimates, which need to be elaborated in the right order, as explained more detailed below. Moreover, since the velocity field is no longer assumed to be a given function as in \cite{HKLT}, the required $L^\infty(0,\mathcal T; H^1(\mathcal M))$ of the moisture components
cannot be derived by applying the parabolic estimates to the moisture system (\ref{eq.qv})--(\ref{eq.qr}).
}

\section{An approximated system: existence and a priori estimates}
\label{sec.apx}

This section is devoted to proving the global existence, uniqueness, and uniform a priori estimates of the following $\e$--approximate system to (\ref{eq.qv})--(\ref{eq.qr}), (\ref{eq.T}), and (\ref{eq.u})--(\ref{eq.Phi}),
\begin{eqnarray}
&&\pa_t\mathbf{u}+(\mathbf{u}\cdot \nabla_h)\mathbf{u} + \omega
\pa_p\mathbf{u} + \nabla_h \Phi +f (\mathbf{k}\times\mathbf{v})_h={\cal
D}^{\mathbf{u}}\mathbf{u},\label{Au.1}\\
&&\pa_p \Phi + \frac{R T}{p}=0\label{Au.2}\\
&&\nabla_h\cdot\mathbf{u}+\pa_p\omega =0,\label{Au.3}\\
&&\pa_tT+(\mathbf{u}\cdot\nabla_h)T+\omega\partial_pT
-\frac{R
}{c_p}\frac{T}{p} \omega=\frac{L}{c_p}(S_{cd}^+-S_{ev,\varepsilon}^+)+ {\cal D}^TT,\label{AT}\\
&&\partial_tq_v+\mathbf{u}\cdot\nabla_hq_v+\omega\partial_pq_v  =
S_{ev,\varepsilon}^+
- S_{cd}^++\cal{D}^{q_v} q_v\,,\label{Aqv}\\
&&\partial_tq_c+\mathbf{u}\cdot\nabla_hq_c+\omega\partial_pq_c= S_{cd}^+ -S_{ac}^+- S_{cr}^++\cal{D}^{q_c} q_c\,,\label{Aqc}\\
&&\partial_tq_r+\mathbf{u}\cdot\nabla_hq_r+\omega\partial_pq_r+V\pa_p
\left(\frac{p}{R_d\bar{T}} q_r\right) = S_{ac}^+ + S_{cr}^+-
S_{ev,\varepsilon}^++\cal{D}^{q_r} q_r\,,\label{Aqr}
\end{eqnarray}
where $\varepsilon\in(0,1)$ is  fixed  and
\begin{eqnarray}
\label{Sev.eps}&&S_{ev,\varepsilon}^+=C_{ev}  R T^+q_r^+(q_r^++\varepsilon)^{\beta-1}(q_{vs}(p,T)-q_v)^+,\quad \varepsilon\in(0,1),\\
&&S_{cr}^+=C_{cr}q_c^+q_r^+,\qquad S_{ac}^+=S_{ac}=C_{ac}(q_c-q_{ac}^*)^+,\\
&&S_{cd}^+=C_{cd}(q_v^+-q_{vs}(p,T))q_c^++C_{cn}(q_v-q_{vs}(p,T))^+.
\end{eqnarray}
Unlike the original $S_{ev}$, the corresponding approximation
$S_{ev,\varepsilon}^+$ is Lipschitz with respect to $q_r$, and it approximates
$S_{ev}$ as $\varepsilon$ tends to zero.

Since all the nonlinear terms $S_{ev,\varepsilon}^+, S_{cr}^+,
S_{ac}^+,$ and $S_{cd}^+$ are Lipschitz with respect to $q_v, q_c, q_r,$ and $T$, the local, in time, existence and
uniqueness of strong solutions to the initial boundary value problem of the $\e$--approximate system (\ref{Au.1})--(\ref{Aqr}) follows the standard contraction mapping fixed point principle and  we obtain the  following proposition on the local, in time, existence and uniqueness result.

\begin{prop}
  \label{Aloc}
Assume that $\mathbf{u}_0, T_0, q_{v0}, q_{c0}, q_{r0}\in H^1
(\mathcal M)$ and $q_{v0}, q_{c0}, q_{r0}\in L^\infty(\mathcal M)$, with $T_0$, $q_{v0}$, $q_{c0}$,
$q_{r0}\geq0$ on $\mathcal M$, and
$\int_{p_0}^{p_1}\nabla_h\cdot\mathbf{u}_0dp=0$ on $\mathcal M'$. Then,
there is a positive time $\mathcal T_0$ depending only on
the upper bound of $\|(\mathbf{u}_0, T_0, q_{v0}, q_{c0}, q_{r0})\|_{H^1(\mathcal M)}$, which is independent of $\e$,
such that system (\ref{Au.1})--(\ref{Aqr}), subject to
(\ref{bound.0})--(\ref{IC}), has a unique strong solution
$\mathbf{u}, T, q_v, q_c, q_r$
on $\mathcal M\times(0,\mathcal T_0)$, satisfying
\begin{eqnarray*}
  &&\mathbf{u}, T, q_v, q_c, q_r\in C([0,\mathcal T_0]; H^1(\mathcal M))\cap L^2(0,\mathcal T_0; H^2(\mathcal M)), \quad
\\
&&\partial_t\mathbf{u},\partial_tT, \partial_tq_v, \partial_tq_c, \partial_tq_r\in L^2(0,\mathcal T_0; L^2(\mathcal M)).
\end{eqnarray*}
\end{prop}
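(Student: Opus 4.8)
The plan is to prove Proposition~\ref{Aloc} by a standard Banach fixed-point argument, exploiting the fact that, after the $\e$-regularization, all source terms are globally Lipschitz in $(\mathbf u, T, q_v, q_c, q_r)$. First I would reformulate the system so that the diagnostic variables $\omega$ and $\Phi$ are eliminated: using \eqref{eq.omega} and \eqref{eq.Phi}, one writes $\omega = \omega[\mathbf u]$ and $\nabla_h\Phi = \nabla_h\Phi_s + \int_p^{p_0}\frac{R}{\sigma}\nabla_hT\,d\sigma$, so that the unknowns are reduced to $(\mathbf u, T, q_v, q_c, q_r)$ solving a coupled system of parabolic equations with the operators $\mathcal D^{\mathbf u}, \mathcal D^T, \mathcal D^{q_j}$ providing the dissipation. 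The mixed Robin/Neumann boundary conditions \eqref{bound.0}--\eqref{bound.ll} are those associated naturally with these weighted second-order operators, so the corresponding linear parabolic problems generate analytic semigroups (or, equivalently, admit maximal $L^2$-regularity) on $L^2(\mathcal M)$ with domains comparable to $H^2(\mathcal M)$ subject to the boundary conditions. Since the boundary data $T_{b0},T_{b\ll},q_{bj}$ are smooth and may depend on time, I would first subtract off a smooth fixed extension of the boundary data to reduce to the case of homogeneous (or at least time-independent smooth) boundary conditions, absorbing the resulting lower-order terms into the right-hand side.

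Next I would set up the contraction. Fix $R_0 > \|(\mathbf u_0, T_0, q_{v0}, q_{c0}, q_{r0})\|_{H^1(\mathcal M)}$ and, for $\mathcal T_0>0$ to be chosen, let
\[
  X_{\mathcal T_0}=\Big\{(\mathbf u,T,q_v,q_c,q_r)\in C([0,\mathcal T_0];H^1(\mathcal M))\cap L^2(0,\mathcal T_0;H^2(\mathcal M))
  \ :\ \|\cdot\|_{X_{\mathcal T_0}}\le 2R_0,\ (\cdot)|_{t=0}=(\mathbf u_0,\dots)\Big\},
\]
with the norm $\|\cdot\|_{X_{\mathcal T_0}}$ being $\sup_{[0,\mathcal T_0]}\|\cdot\|_{H^1}^2+\int_0^{\mathcal T_0}\|\cdot\|_{H^2}^2\,dt$. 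Given $(\tilde{\mathbf u},\tilde T,\tilde q)\in X_{\mathcal T_0}$, define $(\mathbf u,T,q)$ as the solution of the linear problems obtained by freezing the transport velocity and all source terms at the tilde variables: e.g. $\partial_t\mathbf u -\mathcal D^{\mathbf u}\mathbf u = -(\tilde{\mathbf u}\cdot\nabla_h)\tilde{\mathbf u}-\omega[\tilde{\mathbf u}]\partial_p\tilde{\mathbf u}-f(\mathbf k\times\tilde{\mathbf v})_h-\nabla_h\Phi[\tilde T]$, and analogously for $T$ and the $q_j$ with right-hand sides given by the transport terms, the antidissipative term $\frac{R}{c_p}\frac{T}{p}\omega[\tilde{\mathbf u}]$, the settling term $V\partial_p(\tfrac{p}{R_d\bar T}\tilde q_r)$, and the $\e$-regularized source terms evaluated at $(\tilde T,\tilde q)$. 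Maximal $L^2$-regularity for the linear parabolic operators, together with the interpolation embedding $H^1(\mathcal M)\cap\{$finite $X_{\mathcal T_0}$-norm$\}\hookrightarrow L^4(0,\mathcal T_0;L^6)$ and the anisotropic Ladyzhenskaya-type estimates (Lemma~\ref{lemlad} in the Appendix), lets one bound each right-hand side in $L^2(0,\mathcal T_0;L^2(\mathcal M))$ by $C(R_0)(1+\mathcal T_0^{\gamma})$ for some $\gamma>0$; hence for $\mathcal T_0=\mathcal T_0(R_0)$ small the solution map preserves $X_{\mathcal T_0}$. Repeating the estimate on differences $(\mathbf u_1-\mathbf u_2,\dots)$, using the global Lipschitz property of $S_{ev,\e}^+, S_{cd}^+, S_{ac}^+, S_{cr}^+$ in all arguments and the bilinearity of the transport terms, yields a contraction factor $\le C(R_0)\mathcal T_0^{\gamma}<\tfrac12$, so Banach's theorem provides a unique fixed point, which is the desired local strong solution; the time-derivative regularity $\partial_t(\cdot)\in L^2(0,\mathcal T_0;L^2)$ then follows directly from the equations since every other term lies in $L^2(L^2)$.

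The key structural point making this work — and it is worth flagging rather than the routine estimates — is that $\mathcal T_0$ depends only on the $H^1$-norm of the data and \emph{not} on $\e$: this is because the $\e$-dependence enters only through $S_{ev,\e}^+$, and $(q_r^++\e)^{\beta-1}\le (q_r^+)^{\beta-1}$ is false for $\beta<1$, but what one actually uses is that $q_r^+(q_r^++\e)^{\beta-1}\le (q_r^++\e)^{\beta}\le (q_r^+)^{\beta}+1$ uniformly in $\e\in(0,1)$, so $|S_{ev,\e}^+|\le C\, T^+((q_r^+)^\beta+1)(q_{vs}-q_v)^+$ with a constant independent of $\e$; combined with the uniform bound on $q_{vs}$ this gives $\e$-uniform $L^2(L^2)$ bounds on the source term in terms of the $X_{\mathcal T_0}$-norm. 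The main obstacle in writing the argument carefully is therefore \emph{not} the fixed point per se but verifying the maximal-regularity / analytic-semigroup property for the weighted anisotropic operator $\mathcal D^*=\mu_*\Delta_h+\nu_*\partial_p((\tfrac{gp}{R\bar T})^2\partial_p)$ on the cylinder $\mathcal M$ with the given mixed Robin boundary conditions on $\Gamma_0,\Gamma_1,\Gamma_\ll$; since $\bar T(p)$ is smooth and bounded above and below, this operator is uniformly elliptic and the boundary conditions are of Robin type satisfying the Lopatinskii--Shapiro condition, so the required $L^2$-theory is classical, but one should either cite the appropriate reference (e.g.\ \cite{LTW,PTZ} or standard parabolic theory) or sketch why the weight does not cause trouble. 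Once that linear input is in hand, the nonlinear iteration and the $\e$-uniformity of $\mathcal T_0$ are as above.
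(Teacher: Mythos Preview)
Your approach---Banach fixed point after reducing to the prognostic variables---is exactly what the paper has in mind; the paper itself devotes only one sentence to this proposition, saying it ``follows the standard contraction mapping fixed point principle'' since the regularized sources are Lipschitz. Your sketch of the iteration, the maximal-regularity input for the weighted operator $\mathcal D^*$ with Robin boundary conditions, and the treatment of the bilinear transport terms via Lemma~\ref{lemlad} are all appropriate.

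There is, however, a gap in your argument for the $\varepsilon$-independence of $\mathcal T_0$. The bound $q_r^+(q_r^++\varepsilon)^{\beta-1}\le (q_r^+)^\beta+1$ that you give is correct and does make the \emph{map-into-ball} step uniform in $\varepsilon$. But the \emph{contraction} step requires a Lipschitz estimate for $q_r\mapsto q_r^+(q_r^++\varepsilon)^{\beta-1}$, and the derivative of this map at $q_r=0^+$ equals $\varepsilon^{\beta-1}$, which blows up as $\varepsilon\to0$ when $\beta<1$. Hence the contraction factor $C(R_0)\mathcal T_0^\gamma$ you write is really $C(R_0,\varepsilon)\mathcal T_0^\gamma$, and Banach's theorem as you have set it up only yields $\mathcal T_0=\mathcal T_0(R_0,\varepsilon)$.

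Two clean ways to recover the stated $\varepsilon$-independence: (i) replace Banach by Schauder for existence, using the $\varepsilon$-uniform map-into-ball estimate together with Aubin--Lions compactness coming from the $L^2(0,\mathcal T_0;H^2)\cap H^1(0,\mathcal T_0;L^2)$ bounds, and prove uniqueness separately for each fixed $\varepsilon$ via the $\varepsilon$-dependent Lipschitz bound; or (ii) first obtain local existence on some $\mathcal T_0(\varepsilon)>0$ by your Banach argument, then run short-time $\varepsilon$-uniform $H^1$ energy estimates on the actual solution (your uniform source bound feeds directly into these) and use the fixed-$\varepsilon$ continuation criterion to extend to an $\varepsilon$-independent time. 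Note also that, strictly speaking, the paper's subsequent argument (Corollary~\ref{COR}) only needs the continuation criterion \eqref{T_max} for each fixed $\varepsilon$ together with the $\varepsilon$-uniform a~priori estimates of Propositions~\ref{bddq}--\ref{prop.est.dt}; the $\varepsilon$-independence of $\mathcal T_0$ asserted in Proposition~\ref{Aloc} is convenient but not essential for the global result.
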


In the following we let $(\mathbf{u}, T, q_v, q_c, q_r)$ be the solution obtained in
Proposition \ref{Aloc}, and extend it
to the maximal interval of existence $(0,\mathcal T_\text{max})$, where $\mathcal T_\text{max}$ is characterized
as
\begin{equation}\label{T_max}
  \limsup_{\mathcal T\rightarrow\mathcal T_{\text{max}}^-}\|(\mathbf{u}, T, q_v, q_c, q_r)\|_{H^1(\mathcal M)}=\infty,\quad \mbox{if } \mathcal T_{\text{max}}<\infty.
\end{equation}

The following assumption will be made in the subsequent propositions throughout this section.
\begin{assumption}\label{ass} Let all the assumptions in Proposition \ref{Aloc} hold, and let the unique solution $(\mathbf{u}, T, q_v, q_c, q_r)$ obtained in Proposition \ref{Aloc} be extended in the above way to the maximal time of existence $\mathcal T_{\text{max}}$.
\end{assumption}
The aim of the rest of this section is to show that $\mathcal T_{\text{max}}=\infty$, and to establish the a priori estimates that are independent of $\varepsilon\in(0,1)$. \corr{Here at first nonnegativity and uniform boundedness of $(T,q_v, q_c, q_r)$ are proven. This in particular allows to derive estimates for the moisture mixing ratios  in $L^\infty(0,\T;L^2(\M)) \cap L^2(0,\T;H^1(\M))$, which grow continuously in $\T$. The energy of the horizontal velocity is combined with the integral of the temperature, since this allows for the cancellation of the geopotential term involving the vertical velocity. This bound on the horizontal velocity then allows to obtain an $L^\infty(0,\T;L^2(\M)) \cap L^2((0,\T;H^1(\M))$ control of $T$. Due to the strong nonlinearty of the system the direct derivation of a priori estimates for the gradients of the solution components is not possible and we employ the main idea of Cao and Titi \cite{CT} by first bounding $\mathbf{u}$ in $L^\infty(0,\T;L^6(\M))$. The key idea here in \cite{CT} is to split the solution into the barotropic (vertically averaged) and the baroclinic component (the according deviation). Since the geopotential due to the hydrostatic assumption enters the equations in fact only as a two-dimensional surface, it is absent in the dynamics of the baroclinic mode, allowing to close the estimate for  $\mathbf{u}$ in $L^\infty(0,\T;L^6(\M))$. Based upon this estimate then further an a priori estimate for $\pa_p \mathbf{u}$ and thereafter $\pa_p q_{j}$ can be derived. These and previous bounds allow further to control the horizontal gradients $\nabla_h \mathbf{u}$ and subsequently also $\nabla_h q_j$. Finally also a control of the gradient of $T$ is derived, which completes the set of estimates allowing to bound all solution components in $L^\infty(0,\T;H^1(\M))\cap L^2(0,\T;H^2(\M))$.}

\corr{As a first step we derive the nonnegativity and uniform boundedness of $(T,q_v, q_c, q_r)$ in the next proposition in a similar fashion to \cite{HKLT}:}

\begin{prop}\label{bddq}
Let Assumption \ref{ass} hold, then the solution $(T,q_{v},q_{c},q_{r})$ satisfies
\beq
0\leq q_{v}\leq  q_v^*\,,\quad 0\leq q_{c}\leq q_c^*\,,\quad 0\leq q_{r}\leq  q_r^*\,,\quad 0\leq T\leq T^*\,, \qquad \textnormal{on} \ \mathcal M \times (0,\mathcal T),
\eeq
for any $\mathcal T \in (0,\mathcal T_{max})$, where
\beq\label{qv.star}
q_v^*=\max\big\{\|q_{v 0}\|_{L^\infty{(\M)}},\|q_{b0v}\|_{L^\infty((0,\T)\times\M')},\|q_{b\ll v}\|_{L^\infty((0,\T)\times\Gamma_\ll)}, q_{vs}^*\big\}
\eeq
with $q_{vs}^*=\max q_{vs}$. Moreover $q_c^*,q_r^*,T^*$ are continuous in $\mathcal T$ and depend on the following quantities:
\beq
&&q_c^*\, =C_{q_c}\big(\T, \|q_{c 0}\|_{L^\infty{(\M)}},\|q_{b0c}\|_{L^\infty((0,T)\times\M')},\|q_{b\ll c}\|_{L^\infty((0,T)\times\Gamma_\ll)},q_v^*,q_{vs}^*\big)\,,\\
&&q_r^*\, =C_{q_r}\big(\T, \|q_{r 0}\|_{L^\infty{(\M)}},\|q_{b0r}\|_{L^\infty((0,T)\times\M')},\|q_{b\ll r}\|_{L^\infty((0,T)\times\Gamma_\ll)},q_c^*\big)\,,\\
&&T^*=C_{T}\big(\T,\|T_0\|_{L^\infty{(\M)}},\|T_{b0}\|_{L^\infty((0,T)\times\M')}, \|T_{b\ll}\|_{L^\infty((0,T)\times\Gamma_\ll)},q_v^*,q_c^*,q_{vs}^*\big)\,.
\eeq
\end{prop}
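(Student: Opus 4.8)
\sketch
The strong solution furnished by Proposition \ref{Aloc} is regular enough that multiplying the equations by truncations such as the negative part $(\cdot)^-$ or $(\cdot-M)^+$ and integrating by parts over $\M$ is legitimate on $(0,\T)$ for every $\T<\T_{\max}$. I would use two preliminary facts repeatedly. First, the transport part $\mathbf u\cdot\na_h(\cdot)+\omega\pa_p(\cdot)$ contributes nothing when tested against a function of the transported quantity, since $\na_h\cdot\mathbf u+\pa_p\omega=0$ together with $\omega=0$ on $\Gamma_0\cup\Gamma_1$ and $\mathbf u\cdot\mathbf n=0$ on $\Gamma_\ell$. Second, for each component the diffusion $\mu_*\Delta_h(\cdot)+\nu_*\pa_p\big((\tfrac{gp}{R\bar T})^2\pa_p(\cdot)\big)$, after integration by parts, yields the good term $\mu_*\|\na_h(\cdot)\|^2+\nu_*\|\tfrac{gp}{R\bar T}\pa_p(\cdot)\|^2$ plus boundary contributions on $\Gamma_0$ and $\Gamma_\ell$ that, by the Robin conditions $\pa_{\mathbf n}(\cdot)=\alpha(\cdot_b-\cdot)$ with $\alpha\ge0$, $\cdot_b\ge0$, carry a favorable sign whenever the level against which we test dominates the corresponding boundary datum, the $\Gamma_1$-term vanishing because $\pa_p(\cdot)=0$ there. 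Before touching $T$ I would pass to the potential temperature $\theta=T(p_0/p)^\kappa$: by \eqref{PT} the left-hand side of \eqref{AT} equals $(p/p_0)^\kappa\tfrac{d\theta}{dt}$, so the antidissipative term $-\tfrac R{c_p}\tfrac Tp\omega$ is absorbed and
\[
\pa_t\theta+\mathbf u\cdot\na_h\theta+\omega\pa_p\theta=\Big(\tfrac{p_0}{p}\Big)^{\!\kappa}\Big(\tfrac L{c_p}(S_{cd}^+-S_{ev,\varepsilon}^+)+\mathcal D^TT\Big),
\]
where $\mathcal D^TT=(p_0/p)^\kappa\mathcal D^T\theta+a(p)\pa_p\theta+b(p)\theta$ with $a,b$ bounded (since $(p_0/p)^\kappa$ and its derivatives are bounded above and below on $[p_1,p_0]$); the term $a(p)\pa_p\theta$ is in divergence form and $b(p)\theta$ is a bounded zeroth order perturbation, and the boundary traces they generate are absorbed into the good term via the trace interpolation $\|f\|_{L^2(\pa\M)}^2\le\epsilon\|\na f\|^2+C_\epsilon\|f\|^2$. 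The point of this reduction is that $\omega$ now enters only through the divergence-free transport, so the bound on $\theta$, hence on $T=(p/p_0)^\kappa\theta\le\theta$, will not involve $\omega$ — a direct estimate on \eqref{AT} would force $T^*$ to depend on $\omega$.

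For the nonnegativity I would test the equations for $q_v,q_c,q_r,\theta$ against their negative parts; by the two preliminary facts everything reduces to checking that on the set where the unknown is negative the reaction term has the right sign, which is exactly what the truncations in \eqref{Sev.eps}--\eqref{Scd} guarantee: on $\{q_v<0\}$ one has $q_v^+=0$, hence $S_{ev,\varepsilon}^+\ge0$ and $-S_{cd}^+=C_{cd}q_{vs}q_c^+\ge0$; on $\{q_c<0\}$ only $S_{cd}^+\ge0$ survives on the right; on $\{q_r<0\}$ only $S_{ac}^+\ge0$ survives, and the falling-rain operator contributes, after two integrations by parts, a bulk term $\le C\|q_r^-\|^2$, a favorable $\Gamma_0$-boundary term, and a $\Gamma_1$-term absorbed by trace interpolation; on $\{\theta<0\}=\{T<0\}$ one has $T\le0\le T_A$, so $q_{vs}(p,T)=0$ by \eqref{cond.qvs} and therefore $S_{ev,\varepsilon}^+=0$, $S_{cd}^+=C_{cd}q_v^+q_c^++C_{cn}q_v^+\ge0$. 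In each case one reaches $\tfrac d{dt}\|(\cdot)^-\|^2\le C\|(\cdot)^-\|^2$ with vanishing initial datum, so $(\cdot)^-\equiv0$; this can be done simultaneously, or in the order $q_v,q_c,q_r,\theta$ since the $\theta$-case uses $q_v,q_c\ge0$.

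For the upper bounds, $q_v$ is special: on $\{q_v>q_v^*\}\subseteq\{q_v>q_{vs}\}$ one has $S_{ev,\varepsilon}^+=0$ and $S_{cd}^+=C_{cd}(q_v-q_{vs})q_c+C_{cn}(q_v-q_{vs})\ge0$, so the reaction term $S_{ev,\varepsilon}^+-S_{cd}^+$ is a genuine sink, and testing with $(q_v-q_v^*)^+$ — whose initial datum vanishes and whose level dominates the boundary data by \eqref{qv.star} — gives $\tfrac d{dt}\|(q_v-q_v^*)^+\|^2\le0$, i.e. the time-uniform bound $q_v\le q_v^*$. For $q_c,q_r$ and $\theta$ no such sink exists (condensation, autoconversion, collection and latent heating are true sources), so the bounds grow with $\T$. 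Using $0\le q_v\le q_v^*$ one has $S_{cd}^+\le C_{cd}q_v^*q_c+C_{cn}q_v^*$; using $0\le q_c\le q_c^*$ one has $S_{ac}^+\le C_{ac}q_c^*$ and $S_{cr}^+\le C_{cr}q_c^*q_r$; and on $\{\theta>M\}$ one has $T\ge(p_1/p_0)^\kappa M\ge T_B$ for $M$ large, hence $q_{vs}=0$ and $S_{cd}^+-S_{ev,\varepsilon}^+\le C_{cd}q_v^*q_c^*+C_{cn}q_v^*=:K$. In each case the reaction term is dominated by an affine function of the unknown with coefficients built from the data and the bounds already obtained, so I would introduce comparison functions $q_c^*(t),q_r^*(t),M(t)$ solving the corresponding linear ODEs $\dot Q=aQ+b$, with $Q(0)$ dominating the initial datum and the (possibly time-dependent) boundary data on $[0,\T]$, and test the equations against $(q_c-q_c^*(t))^+$, $(q_r-q_r^*(t))^+$, $(\theta-M(t))^+$. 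On the relevant superlevel set the reaction term minus $\dot Q$ is then bounded by $C\,(\cdot-Q)^+$, so, with the favorable boundary terms, $\tfrac d{dt}\|(\cdot-Q)^+\|^2\le C\|(\cdot-Q)^+\|^2$ with zero initial datum, whence $q_c\le q_c^*(t)$, $q_r\le q_r^*(t)$, $\theta\le M(t)$, and finally $T\le M(t)=:T^*$. Since $Q(0)$ and the coefficients depend only on the data and the previously established bounds, $q_c^*,q_r^*,T^*$ depend only on $\T$ (through the exponential growth of these ODEs) and on the quantities listed in the statement, and are continuous in $\T$.

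I expect the main obstacle to be the two terms that do not obey the plain maximum principle. The antidissipative term $\tfrac R{c_p}\tfrac Tp\omega$ in \eqref{AT}, with $\omega$ not controlled pointwise, is what forces the potential-temperature reduction to be performed first. The falling-rain flux $V\pa_p\big(\tfrac p{R_d\bar T}q_r\big)$ in \eqref{Aqr} is the other: splitting it as $\tfrac{Vp}{R_d\bar T}\pa_pq_r+V\pa_p\big(\tfrac p{R_d\bar T}\big)q_r$, the first piece is an extra vertical transport whose boundary contribution at $\Gamma_0$ (the ground) has the favorable outflow sign and whose $\Gamma_1$-contribution is absorbed into the good term by trace interpolation, while the second piece and the residual bulk terms are $\le C\|(q_r-q_r^*(t))^+\|^2+Cq_r^*(t)\|(q_r-q_r^*(t))^+\|_{L^1}$ and are absorbed by letting $q_r^*(t)$ grow slightly faster; the careful bookkeeping of all boundary terms, and of the lower order terms produced by the $\theta$-substitution, follows the pattern of \cite{HKLT}.
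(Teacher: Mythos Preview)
Your argument is correct and, for the nonnegativity and for the $q_v$ upper bound, coincides with the paper's (Stampacchia testing against the negative part, respectively against $(q_v-q_v^*)^+$, after passing to the potential temperature $\theta$ to kill the antidissipative term). The genuine difference is in the upper bounds for $q_c,q_r,\theta$: the paper does \emph{not} use time-dependent supersolutions, but instead obtains $L^m$ estimates for the truncations $(q_c-M)^+,(q_r-M)^+,(\theta-M)^+$ with constants independent of $m$ and lets $m\to\infty$ (a Moser-type iteration, carried over from Proposition~3.2 in \cite{HKLT}). Your comparison-function approach---choosing $q_c^*(t),q_r^*(t),M(t)$ to solve the affine ODEs $\dot Q=aQ+b$ built from the already established bounds and testing against $(\,\cdot\,-Q(t))^+$---is a legitimate and more elementary alternative for this particular system, precisely because the reaction terms are affine in the unknown once the earlier bounds are in place. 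The iteration route, in exchange, is more robust (it does not require the source to be affine and would survive, e.g., polynomial growth), which is presumably why \cite{HKLT} set it up in that generality.

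Two small points to clean up in your write-up. First, the formula ``$\mathcal D^TT=(p_0/p)^\kappa\mathcal D^T\theta+a(p)\partial_p\theta+b(p)\theta$'' has the prefactor on the wrong side; what you need is $(p_0/p)^\kappa\mathcal D^TT=\widetilde{\mathcal D}\theta+a(p)\partial_p\theta+b(p)\theta$ for a second-order operator $\widetilde{\mathcal D}$ with the same ellipticity constants. Second, when you test $b(p)\theta$ against $(\theta-M(t))^+$ you pick up a term of order $M(t)\|(\theta-M(t))^+\|_{L^1}$, so $M(t)$ must solve $\dot M=cM+K$ rather than $\dot M=K$; this only changes the growth from linear to exponential in $t$ and is harmless for the statement. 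With these adjustments your Gronwall closes.
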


\begin{proof}
We only state the key steps of the proof and refer to the proof of Proposition 3.2 in \cite{HKLT} for more details. Due to the vanishing of the antidissipative term in the potential temperature $\theta$, the latter is used instead of the temperature for the derivation of the maximum principle. As a first step then the nonnegativity of $q_c,q_v,q_r$ and $\theta$ are  derived  in exactly this order by employing the Stampacchia method. Therefore  the according equations are multiplied with the corresponding negative parts of the solution components and after integration and integration by parts  the Gronwall inequality is applied to show that if the solution was nonnegative initially it has to remain nonnegative for all times. The same method can then be used to show uniform boundedness of $q_v$ by deriving an $L^2$-estimate for $(q_v-q_v^*)^+$. The boundedness of $q_c, q_r$ and $\theta$ again in this order follow by employing iterative estimations for cutoff functions in $L^m$ and then passing to the limit as $m\rightarrow \infty$ (see Proposition 3.2 in \cite{HKLT}).
\end{proof}

Due to the nonnegativity of $q_v, q_c, q_r,$ and $T$ we obtain also for the source terms
 \[S_{cr}^+=S_{cr}, \quad S_{cd}^+ =S_{cd} \quad \textnormal{and} \quad S_{ev,\varepsilon}^+=S_{ev,\varepsilon}=C_{ev}Tq_r(q_r+\e)^{\beta-1}(q_{vs}-q_v)^+\,,\]
such that in the following we can drop the positive signs in the according notations. Proposition \ref{bddq} in particular also implies the boundedness of the source terms. To see the uniform boundedness of $S_{ev,\varepsilon}$ we notice that since $\beta \in (0,1)$, we have $q_r(q_r+\varepsilon)^{\beta-1}\leq q_rq_r^{\beta-1}=q_r^\beta$, and recall that $q_{vs}(p,T)=0$, for $T\geq T_{B}$ (see (\ref{cond.qvs})). Therefore, one has
\begin{equation}
  \label{bddsource}
0\leq S_{ev,\epsilon},S_{cr}, S_{ac},|S_{cd}|\leq C,
\end{equation}
where the constant $C$, as the upper bounds in Proposition \ref{bddq}, depends on $\mathcal T$, the initial data and the given functions in the boundary conditions (\ref{bound.0})--(\ref{bound.ll}).

\begin{prop}
\label{H1q}
Let Assumption \ref{ass} hold, then  there exists a function $K_0(t)$, which depends on the initial and boundary data, and is continuous for all $t\geq 0$, such that  the estimate
\begin{equation*}
\int_0^{\mathcal T}
\|\nabla(q_v,q_c,q_r)\|^2dt\leq K_0(\mathcal T),
\end{equation*}
holds for any $\mathcal T\in[0,\mathcal T_{\text{max}})$.
\end{prop}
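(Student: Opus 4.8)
The plan is to derive the claimed space-time $L^2$ bound on $\nabla(q_v,q_c,q_r)$ from standard energy estimates, exploiting the uniform $L^\infty$ bounds on $T,q_v,q_c,q_r$ and on the source terms already established in Proposition~\ref{bddq} and estimate \eqref{bddsource}. The key point is that, since the $L^\infty$ bounds are already in hand, the moisture equations \eqref{Aqv}--\eqref{Aqr} can be treated essentially as linear advection--diffusion equations with bounded right-hand sides, and the troublesome transport terms will be handled using the incompressibility condition \eqref{Au.3} together with the boundary conditions \eqref{bound.0}--\eqref{bound.ll}.

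First I would multiply each of \eqref{Aqv}, \eqref{Aqc}, \eqref{Aqr} by $q_v$, $q_c$, $q_r$ respectively and integrate over $\M$. For the advection term $\int_\M (\mathbf{u}\cdot\nabla_h q_j + \omega\pa_p q_j)\,q_j\,dx$, I would integrate by parts in $(x,y,p)$; using $\nabla_h\cdot\mathbf{u}+\pa_p\omega=0$ and the fact that $\omega$ vanishes on $\Gamma_0$ and $\Gamma_1$ and $\mathbf{u}\cdot\mathbf{n}=0$ on $\Gamma_\ell$, this term vanishes identically. For the diffusion term $\int_\M (\D^{q_j}q_j)\,q_j\,dx$, integration by parts produces $-\mu_{q_j}\|\nabla_h q_j\|^2 - \nu_{q_j}\|(gp/R\bar T)\pa_p q_j\|^2$ plus boundary contributions coming from the Robin conditions $\pa_p q_j=\alpha_{0j}(q_{b0j}-q_j)$ on $\Gamma_0$ and $\pa_n q_j=\alpha_{\ell j}(q_{b\ell j}-q_j)$ on $\Gamma_\ell$; these boundary terms are of the form $-\alpha\int q_j^2 + \alpha\int q_{bj}q_j$, which after Young's inequality and a trace inequality are absorbed into a small fraction of $\|\nabla q_j\|^2$ plus lower-order terms controlled by the $L^\infty$ bounds and the given boundary data. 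For the extra sedimentation term $V\pa_p\big((p/R_d\bar T)q_r\big)$ in \eqref{Aqr}, multiplying by $q_r$ and integrating by parts gives a term that is bounded by $C\|q_r\|\,\|\pa_p q_r\|$ (again the boundary terms vanish or are harmless since $q_r$ is bounded and $\omega,\pa_p q_r$ behave well on $\Gamma_0,\Gamma_1$), hence absorbable into $\tfrac14\nu_{q_r}\|\pa_p q_r\|_w^2$ plus $C\|q_r\|^2$. The source terms on the right are bounded in $L^\infty$ by \eqref{bddsource}, so $\int_\M S\,q_j\,dx \le C$. Collecting everything yields
\[
\frac{d}{dt}\|(q_v,q_c,q_r)\|^2 + c\,\|\nabla(q_v,q_c,q_r)\|^2 \le C\big(1+\|(q_v,q_c,q_r)\|^2\big),
\]
with $c>0$ and $C$ depending only on $\mathcal T$, the data, and the physical parameters.

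Finally I would apply Gronwall's inequality to control $\|(q_v,q_c,q_r)(t)\|^2$ on $[0,\mathcal T]$ — in fact the uniform $L^\infty$ bounds of Proposition~\ref{bddq} already give this directly — and then integrate the differential inequality in time over $[0,\mathcal T]$ to obtain $\int_0^{\mathcal T}\|\nabla(q_v,q_c,q_r)\|^2\,dt \le K_0(\mathcal T)$, with $K_0$ continuous in $\mathcal T$ since all constants depend continuously (in fact polynomially/exponentially) on $\mathcal T$ through the bounds in Proposition~\ref{bddq}. I expect the only mildly delicate point to be the careful handling of the boundary integrals arising from the Robin conditions on $\Gamma_0$ and $\Gamma_\ell$: one must invoke a trace/interpolation inequality of the form $\|f\|_{L^2(\partial\M)}^2 \le \eta\|\nabla f\|^2 + C_\eta\|f\|^2$ to absorb them, and verify that the weight $(gp/R\bar T)^2$ in the vertical diffusion, being bounded above and below, does not interfere. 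The advection and sedimentation terms, by contrast, are routine once incompressibility and the boundary conditions are used.
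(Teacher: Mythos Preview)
Your proposal is correct and follows essentially the same approach as the paper's own proof: multiply each moisture equation by the corresponding unknown, integrate, use incompressibility and the boundary conditions to kill the transport terms, integrate the diffusion by parts, and bound the source and sedimentation contributions via the $L^\infty$ bounds of Proposition~\ref{bddq} and \eqref{bddsource}. The only minor remark is that the trace inequality you anticipate for the Robin boundary terms is not actually needed here---since $q_j\in L^\infty(\M)$ is already known, the boundary integrals $\int q_{bj}q_j$ are bounded directly by constants, and the remaining $-\alpha\int q_j^2$ contributions have a favorable sign.
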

\begin{proof}
The conclusion follows from multiplying (\ref{Aqv}), (\ref{Aqc}), and
(\ref{Aqr}), respectively, with $q_v, q_c,$ and $q_r$,
performing integration, integrating by parts and using the boundary conditions as well as  Young's inequality, \corr{where all} integrals involving the source terms $S_{ev, \varepsilon}, S_{cr},$ $S_{ac}, S_{cd}$ are uniformly bounded, due to the a priori bounds obtained in Proposition \ref{bddq} concluding the proof.
\end{proof}

Furthermore we have the basic energy inequality contained in the following proposition.

\begin{prop}[Basic energy estimate]
\label{basic}
Let Assumption \ref{ass} hold, then there exists a function $K_1(t)$, which depends on the initial and boundary data, and is continuous for all $t\geq 0$, such that
  $$
\sup_{0\leq t\leq\mathcal T}\int_\mathcal M(|\mathbf{u}|^2+T)d\mathcal M+\int_0^{\mathcal T}
(\|\nabla_h \mathbf{u}\|^2+\|\partial_p\mathbf{u}\|_w^2)dt\leq K_1(\mathcal T),
$$
for any $\mathcal T\in[0,\mathcal T_{\text{max}})$.
\end{prop}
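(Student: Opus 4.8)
The plan is to test the momentum equation \eqref{Au.1} with $\mathbf{u}$, integrate the temperature equation \eqref{AT} over $\mathcal M$ (that is, test it with the constant $1$), multiply the latter identity by $c_p$, and add the two. The point of this precise combination is that the geopotential contribution from the first test and the antidissipative term $\frac{R}{c_p}\frac Tp\omega$ from the second cancel exactly; everything else is then either of favourable sign or controlled by the data through Proposition \ref{bddq}.

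Testing \eqref{Au.1} with $\mathbf{u}$: writing the transport terms $(\mathbf{u}\cdot\nabla_h)\mathbf{u}+\omega\partial_p\mathbf{u}$ in divergence form and using the incompressibility \eqref{Au.3} together with $\omega|_{\Gamma_0}=\omega|_{\Gamma_1}=0$ and $\mathbf{u}\cdot\mathbf{n}|_{\Gamma_\ell}=0$, they drop out; the Coriolis term vanishes pointwise since $(\mathbf{k}\times\mathbf{v})_h\cdot\mathbf{u}=(-v,u)\cdot(u,v)=0$; integration by parts in the diffusion term, using \eqref{bound.0}--\eqref{bound.ll}, yields $-\mu_{\mathbf{u}}\|\nabla_h\mathbf{u}\|^2-\nu_{\mathbf{u}}\|\partial_p\mathbf{u}\|_w^2$ plus the nonnegative boundary term $\nu_{\mathbf{u}}\alpha_{\mathbf{u}}\int_{\Gamma_0}\big(\tfrac{gp}{R\bar T}\big)^2|\mathbf{u}|^2$ (the lateral term vanishes because $\mathbf{u}$ is tangential and $\partial_{\mathbf{n}}\mathbf{u}$ is normal on $\Gamma_\ell$). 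For the geopotential term I use \eqref{eq.Phi}: the surface part $\int_{\mathcal M}\mathbf{u}\cdot\nabla_h\Phi_s\,d\mathcal M$ vanishes because the vertical average of $\mathbf{u}$ is divergence-free (by \eqref{eq.omega} and $\omega|_{\Gamma_0}=\omega|_{\Gamma_1}=0$) with vanishing normal trace on $\partial\mathcal M'$; for the remaining part I integrate by parts in the horizontal variables (using $\mathbf{u}\cdot\mathbf{n}|_{\Gamma_\ell}=0$), substitute $\nabla_h\cdot\mathbf{u}=-\partial_p\omega$ and integrate by parts in $p$ (using again $\omega|_{\Gamma_0}=\omega|_{\Gamma_1}=0$), obtaining $\int_{\mathcal M}\mathbf{u}\cdot\nabla_h\Phi\,d\mathcal M=R\int_{\mathcal M}\frac Tp\,\omega\,d\mathcal M$. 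Hence
\[
\frac12\frac{d}{dt}\|\mathbf{u}\|^2+\mu_{\mathbf{u}}\|\nabla_h\mathbf{u}\|^2+\nu_{\mathbf{u}}\|\partial_p\mathbf{u}\|_w^2+\nu_{\mathbf{u}}\alpha_{\mathbf{u}}\!\int_{\Gamma_0}\!\Big(\tfrac{gp}{R\bar T}\Big)^2|\mathbf{u}|^2=-R\int_{\mathcal M}\frac Tp\,\omega\,d\mathcal M.
\]

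Integrating \eqref{AT} over $\mathcal M$: the transport terms again vanish by \eqref{Au.3} and the boundary conditions; integration by parts in the diffusion term leaves only $\mu_T\int_{\Gamma_\ell}\alpha_{\ell T}(T_{b\ell}-T)+\nu_T\int_{\Gamma_0}\big(\tfrac{gp}{R\bar T}\big)^2\alpha_{0T}(T_{b0}-T)$, which, since $T\ge0$ by Proposition \ref{bddq}, are bounded above by a quantity continuous in $\mathcal T$ depending only on the boundary data; the source term is bounded by \eqref{bddsource}; and the antidissipative term gives $-\frac{R}{c_p}\int_{\mathcal M}\frac Tp\omega\,d\mathcal M$. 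Multiplying this identity by $c_p$ and adding it to the previous one, the terms $\pm R\int_{\mathcal M}\frac Tp\omega\,d\mathcal M$ cancel; discarding the nonnegative boundary term $\nu_{\mathbf{u}}\alpha_{\mathbf{u}}\int_{\Gamma_0}(\cdots)|\mathbf{u}|^2$ and bounding the right-hand side yields
\[
\frac{d}{dt}\Big(\tfrac12\|\mathbf{u}\|^2+c_p\!\int_{\mathcal M}\!T\,d\mathcal M\Big)+\mu_{\mathbf{u}}\|\nabla_h\mathbf{u}\|^2+\nu_{\mathbf{u}}\|\partial_p\mathbf{u}\|_w^2\le\Lambda(\mathcal T),
\]
where $\Lambda$ is continuous and nondecreasing and depends only on the data via \eqref{bddsource} and \eqref{bound.0}--\eqref{bound.ll}. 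Integrating in $t$ on $[0,\mathcal T]$, using $\int_{\mathcal M}T(t)\,d\mathcal M=\|T(t)\|_{L^1(\mathcal M)}\ge0$ and $\|T_0\|_{L^1(\mathcal M)}\le|\mathcal M|\,\|T_0\|_{L^\infty(\mathcal M)}$, and taking the supremum over $t$ gives the claim, say with $K_1(\mathcal T)=C\big(\|\mathbf{u}_0\|^2+\|T_0\|_{L^\infty(\mathcal M)}+\mathcal T\Lambda(\mathcal T)\big)$.

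The only genuinely non-routine step is recognizing that the geopotential and antidissipative terms have to be dealt with together: neither is controllable in isolation at this stage (indeed $\int_{\mathcal M}\frac Tp\omega\,d\mathcal M$ carries no a priori bound yet), but their weighted sum is identically zero — which is precisely why the $L^1$-in-space estimate for $T$ must be coupled to the kinetic energy estimate rather than carried out separately. All the integrations by parts are justified by the regularity of the solution recorded in Proposition \ref{Aloc} (or, equivalently, performed on the Galerkin approximations used in its construction). The remaining bookkeeping — vanishing of the transport and Coriolis terms, coercivity of the diffusion, and boundedness of the boundary and source contributions — is routine.
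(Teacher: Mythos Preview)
Your proof is correct and follows essentially the same route as the paper's: test \eqref{Au.1} with $\mathbf{u}$, integrate \eqref{AT} over $\mathcal M$, multiply the latter by $c_p$, and add so that the geopotential contribution and the antidissipative term cancel. The only cosmetic difference is that you handle $\int_{\mathcal M}\nabla_h\Phi\cdot\mathbf{u}$ by splitting $\Phi$ via \eqref{eq.Phi} into its surface and baroclinic parts, whereas the paper integrates by parts directly and invokes the hydrostatic relation \eqref{Au.2}; both computations yield the same identity.
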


\begin{proof}
  Multiplying equation (\ref{Au.1}) by $\mathbf{u}$, integrating the resultant over $\mathcal M$, then it follows from integration by parts that
\begin{equation}
  \frac12\frac{d}{dt}\|\mathbf{u}\|^2-\int_\mathcal M\mathcal D^{\mathbf{u}}\mathbf{u}\cdot\mathbf{u}d\mathcal M=-\int_\mathcal M\nabla_h\Phi\cdot\mathbf{u}d\mathcal M,\label{egyu.1}
\end{equation}
here we have used the facts that $(\mathbf{k}\times\mathbf{v})_h\cdot\mathbf{u} =\mathbf{u}^\perp\cdot\mathbf{u}=0$, and that the integral involving the
convection terms vanishes after integration by parts, due to the incompressibility condition (\ref{Au.3}) and the boundary conditions (\ref{bound.0})--(\ref{bound.ll}). For the integral of the dissipation term, it follows from integration by parts and the boundary conditions (\ref{bound.0})--(\ref{bound.ll}) for $\mathbf{u}$ that
\begin{eqnarray}
-\int_{\mathcal M}\mathcal
D^{\mathbf{u}}\mathbf{u}\cdot\mathbf{u}d\mathcal M&=&-\int_{\mathcal
M}\left[\mu_{\mathbf{u}}\Delta_h\mathbf{u}
+\nu_{\mathbf{u}}\partial_p\left( \left(\frac{gp}{R_d\bar
T}\right)^2\partial_p\mathbf{u}\right)\right] \cdot\mathbf{u}d\mathcal
M\nonumber\\
&=&-\mu_{\mathbf{u}}\int_{\partial\Gamma_{\ell}}\partial_\mathbf{n}
\mathbf{u}\cdot
\mathbf{u}d\Gamma_{\ell}-\left.\nu_{\mathbf{u}}\int_{\mathcal
M'}\frac{gp}{R_d\bar T}\partial_p\mathbf{u}\cdot\mathbf{u}d\mathcal
M'\right|_{p=p_1}^{p_0}\nonumber\\
&&+\int_\mathcal
M\left(\mu_{\mathbf{u}}|\nabla_h\mathbf{u}|^2+\nu_{\mathbf{u}}\left(
\frac{gp}{R_d\bar T}\right)^2|\partial_p\mathbf{u}|^2\right)d\mathcal
M\nonumber\\
&=&\frac{\nu_{\mathbf{u}}gp_0\alpha_{\mathbf{u}}}{R_d\bar
T(p_0)}\left.\int_{\mathcal M'}|\mathbf{u}|^2d\mathcal M'\right|_{p_0}+\mu_{\mathbf{u}}
\|\nabla_h\mathbf{u}\|^2+\nu_{\mathbf{u}}\|\partial_p\mathbf{u}\|_w^2
\nonumber\\
&\geq&\mu_{\mathbf{u}}
\|\nabla_h\mathbf{u}\|^2+\nu_{\mathbf{u}}\|\partial_p\mathbf{u}\|_w^2.
\label{egyu.2}
\end{eqnarray}
For the term $-\int_\mathcal M\nabla_h\Phi\cdot\mathbf{u}d\mathcal M$, it follows from integrating by parts, using the boundary conditions (\ref{bound.0})--(\ref{bound.ll}) for $\mathbf{u}$, and equations (\ref{Au.2})--(\ref{Au.3}),
that
\begin{eqnarray}
  -\int_\mathcal M\nabla_h\Phi\cdot\mathbf{u}d\mathcal M
  &=& -\int_{\Gamma_\ell}\Phi\mathbf{u}\cdot\textbf{n}d\Gamma_\ell
  +\int_\mathcal M\Phi\nabla_h\cdot\mathbf{u}d\mathcal M\nonumber\\
  &=&-\int_\mathcal M\Phi\partial_p\omega d\mathcal M=
\int_\mathcal M\partial_p\Phi\omega d\mathcal M=-\int_\mathcal M\frac{ RT}{p}\omega d\mathcal M.\label{egyu.3}
\end{eqnarray}
Substituting (\ref{egyu.2}) and (\ref{egyu.3}) into (\ref{egyu.1}) yields
\begin{equation}
  \label{egyu}
\frac12\frac{d}{dt}\|\mathbf{u}\|^2+\mu_\mathbf{u}\|\nabla_h\mathbf{u}\|^2
+\nu_{\mathbf{u}}\|\partial_p\mathbf{u}\|_w^2
\leq-\int_\mathcal M\frac{RT}{p}\omega d\mathcal M.
\end{equation}

Multiplying equation (\ref{AT}) by $c_p$, integrating the resultant over $\mathcal M$, and recalling the nonnegativity of $T$, it follows from integration by parts that
\begin{equation}
  c_p\frac{d}{dt}\|T\|_{L^1}=\int_\mathcal M
  \left(\mathcal D^TT+\frac{ RT}{p}\omega+L(S_{cd}-S_{ev,\varepsilon})\right)d\mathcal M. \label{egyT.1}
\end{equation}
By the boundary conditions (\ref{bound.0})--(\ref{bound.ll}) for $T$,
it follows from integration by parts that
\begin{eqnarray}
\int_\mathcal M\mathcal D^TTd\mathcal M
&=&\int_\mathcal M\left(
\mu_T\Delta_hT+\nu_T\partial_p\left(\frac{gp}{R_d\bar T}\partial_pT\right)\right)d\mathcal M\nonumber\\
&=&\mu_T\int_{\Gamma_\ell}\partial_{\mathbf{n}}Td\Gamma_\ell+\left.\nu_T \int_{\mathcal M'}\frac{gp}{R_d\bar T}\partial_pTd\mathcal M'\right|_{p=p_1}^{p_0}\nonumber\\
&=&\mu_T\int_{\Gamma_\ell}\alpha_{\ell T}(T_{b\ell}-T)d\Gamma_\ell
+\frac{\nu_Tgp_0\alpha_{0T}}{R_d\bar T(p_0)}\left.\int_{\mathcal M'}(T_{b0}-T)d\mathcal M'\right|_{p_0}\nonumber\\
&\leq&\mu_T\int_{\Gamma_\ell}\alpha_{\ell T}T_{b\ell}d\Gamma_\ell
+\frac{\nu_Tgp_0\alpha_{0T}}{R_d\bar T(p_0)}\int_{\mathcal M'}T_{b0}d\mathcal M'\leq C, \label{egyT.2}
\end{eqnarray}
where we note that $C$ depends on the boundary data. Recalling (\ref{bddsource}) we have moreover
\begin{equation}
\int_\mathcal M L(S_{cd}-S_{ev,\varepsilon})d\mathcal M\leq C \label{egyT.3}
\end{equation}
Thanks to (\ref{egyT.2}) and (\ref{egyT.3}), it follows from (\ref{egyT.1}) that
\begin{equation}\label{egyT}
  c_p\frac{d}{dt}\|T\|_{L^1}\leq
\int_\mathcal M\frac{RT}{p}\omega d\mathcal M+C,
\end{equation}
with the constant $C$ depending again on the boundary data.
Adding (\ref{egyu}) and (\ref{egyT}) yields
\begin{equation*}
\frac{d}{dt}\left(\frac12\|\mathbf{u}\|^2+c_p\|T\|_{L^1}\right)
+\mu_\mathbf{u}\|\nabla_h\mathbf{u}\|^2
+\nu_{\mathbf{u}}\|\partial_p\mathbf{u}\|_w^2
\leq C,
\end{equation*}
from which by integration the conclusion of the proposition follows.
\end{proof}

Thanks to the basic energy estimate, we can now derive the $L^\infty(L^2)\cap L^2(H^1)$ estimate on $T$ contained in the following proposition.

\begin{prop}
  \label{L2T}
Let Assumption \ref{ass} hold, then there exists a function $K_2(t)$, which depends on the initial and boundary data, and is continuous for all $t\geq 0$, such that
\begin{equation*}
  \sup_{0\leq t\leq\mathcal T}\|T\|^2+\int_0^{\mathcal T}(\|\nabla_hT\|^2+\|\partial_pT\|_w^2)dt\leq K_2(\mathcal T),
\end{equation*}
for any $\mathcal T\in[0,\mathcal T_{\text{max}})$.
\end{prop}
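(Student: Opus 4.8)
The plan is to multiply the temperature equation \eqref{AT} by $T$ and integrate over $\mathcal{M}$, exactly as in the proof of Proposition \ref{H1q} for the moisture components, but now the antidissipative term $\frac{R}{c_p}\frac{T}{p}\omega$ must be controlled, and this is where the work lies. First I would perform the dissipation integration by parts: using the boundary conditions \eqref{bound.0}--\eqref{bound.ll} one gets
\[
-\int_{\mathcal M}\mathcal D^TT\cdot T\,d\mathcal M\geq \mu_T\|\nabla_h T\|^2+\nu_T\|\partial_p T\|_w^2 - C(1+\|T\|^2),
\]
the lower-order correction coming from the Robin boundary terms $\alpha_{\ell T}(T_{b\ell}-T)$ and $\alpha_{0T}(T_{b0}-T)$, which are absorbed by Young's inequality against the boundary trace and the data. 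The convective terms $(\mathbf{u}\cdot\nabla_h)T$ and $\omega\partial_p T$ again vanish after integration by parts thanks to \eqref{Au.3} and the boundary conditions. The latent heating term $\frac{L}{c_p}\int_{\mathcal M}(S_{cd}-S_{ev,\varepsilon})T\,d\mathcal M$ is bounded by $C\|T\|_{L^1}\le C+C\|T\|^2$ using the uniform source bound \eqref{bddsource}, so it is harmless.

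The main obstacle is the term $\frac{R}{c_p}\int_{\mathcal M}\frac{T^2}{p}\omega\,d\mathcal M$. Since $p\ge p_1>0$ and $\omega(t,x,y,p)=\int_p^{p_0}\nabla_h\cdot\mathbf u\,ds$, a crude estimate gives
\[
\Big|\frac{R}{c_p}\int_{\mathcal M}\frac{T^2}{p}\omega\,d\mathcal M\Big|
\le C\int_{\mathcal M'}\Big(\int_{p_1}^{p_0}|\nabla_h\mathbf u|\,dp\Big)\Big(\int_{p_1}^{p_0}T^2\,dp\Big)dx\,dy.
\]
The inner $p$-integral of $|\nabla_h\mathbf u|$ only gives an $L^2_{xy}$ bound on $\|\nabla_h\mathbf u\|_{L^2_p}$, which pairs with $\|T\|_{L^2_p}^2$ in $L^2_{xy}$; that needs an $L^4_{xy}$ control of $T$. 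The natural route is the anisotropic Ladyzhenskaya/Agmon-type inequality (the two-dimensional $\|f\|_{L^4_{xy}}^2\le C\|f\|_{L^2_{xy}}\|f\|_{H^1_{xy}}$ applied slice-wise in $p$, then integrated using Minkowski and Cauchy--Schwarz in $p$), which yields
\[
\Big|\frac{R}{c_p}\int_{\mathcal M}\frac{T^2}{p}\omega\,d\mathcal M\Big|
\le C\|\nabla_h\mathbf u\|\,\|T\|\,\|T\|^{1/2}\|\nabla_h T\|^{1/2}+\cdots
\]
or a similar product of the dissipative norms of $T$ and $\mathbf{u}$ with lower powers of the $L^2$ norms. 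Then Young's inequality absorbs $\mu_T\|\nabla_h T\|^2$ on the left, at the cost of a term like $C\|\nabla_h\mathbf u\|^{4}\|T\|^{2}$ or $C(1+\|\nabla_h\mathbf u\|^2)\|T\|^2$ on the right.

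The estimate is then closed by Gronwall's inequality: we arrive at
\[
\frac{d}{dt}\|T\|^2+\mu_T\|\nabla_h T\|^2+\nu_T\|\partial_p T\|_w^2\le C\big(1+g(t)\big)\|T\|^2+C,
\]
where $g(t)=\|\nabla_h\mathbf u(t)\|^2$ (or $\|\nabla_h\mathbf u\|^2+\|\partial_p\mathbf u\|_w^2$) is integrable in time on $[0,\mathcal T]$ by the basic energy estimate, Proposition \ref{basic}. Hence $\exp\big(\int_0^{\mathcal T}(1+g)\big)$ is finite and continuous in $\mathcal{T}$, and applying Gronwall gives the claimed $\sup_{[0,\mathcal T]}\|T\|^2$ bound; integrating the differential inequality once more then controls $\int_0^{\mathcal T}(\|\nabla_h T\|^2+\|\partial_p T\|_w^2)\,dt$. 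All constants depend only on the data and on $\mathcal{T}$ through the continuous function $K_1(\mathcal T)$ from Proposition \ref{basic}, so $K_2(t)$ is continuous and the bound is uniform in $\varepsilon\in(0,1)$. The delicate point throughout is choosing the anisotropic interpolation so that every factor of a dissipative norm carries an exponent at most $2$ after Young, so that nothing beyond the time-integrable quantity $g(t)$ is needed; this mirrors the $L^6$ estimate for $\mathbf u$ in Cao--Titi but is simpler here because $p$ stays bounded away from zero.
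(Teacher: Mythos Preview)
Your proposal is correct and follows essentially the same route as the paper: multiply \eqref{AT} by $T$, handle the dissipation via the Robin boundary terms and the trace inequality, bound the latent-heating term by \eqref{bddsource}, and control the antidissipative term $\frac{R}{c_p}\int_{\mathcal M}\frac{T^2}{p}\omega\,d\mathcal M$ by the anisotropic Ladyzhenskaya-type inequality (this is exactly Lemma~\ref{lemlad} in the Appendix), yielding a bound of the form $C\|\nabla_h\mathbf u\|\,\|T\|(\|T\|+\|\nabla_h T\|)\le \tfrac{\mu_T}{2}\|\nabla_h T\|^2+C(1+\|\nabla_h\mathbf u\|^2)\|T\|^2$, after which Gronwall together with Proposition~\ref{basic} closes the estimate. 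The only cosmetic difference is that the paper keeps track of the constant $\tfrac{3\mu_T}{4}$ after the trace absorption and applies Lemma~\ref{lemlad} in the form giving one full power of $\|\nabla_h T\|$, whereas you sketch a half-power variant; both lead to the same Gronwall inequality with $g(t)=\|\nabla_h\mathbf u(t)\|^2$.
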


\begin{proof}
Performing integration by parts and using the boundary conditions for $T$ we obtain
\begin{eqnarray}
  -\int_{\mathcal M}T \mathcal D^T Td\mathcal M&=&-\int_{\mathcal M}
  \left[\mu_T\Delta_hT+\nu_T\partial_p\left(\left(\frac{gp}{R_d\bar
T}\right)^2\partial_pT\right)\right]T d\mathcal M\nonumber\\
  &=&-\mu_T\int_{\Gamma_\ell}\partial_\mathbf{n}TT
d\Gamma_\ell-\nu_T\int_{\mathcal M'}\left(\frac{gp}{R_d\bar
T}\right)^2\partial_pTT d\mathcal
M'\bigg|_{p=p_1}^{p_0}\nonumber\\
  &&+\int_{\mathcal M}\left[\mu_T\nabla_hT\cdot\nabla_hT +\nu_T
  \left(\frac{gp}{R_d\bar
T}\right)^2\partial_pT\partial_pT \right] d\mathcal M\nonumber\\
  &=&\mu_T\|\nabla_hT \|^2+\nu_T\|\partial_pT \|_w^2
  -\mu_T\int_{\Gamma_\ell}\alpha_{\ell T}(T_{b\ell}-T)T
d\Gamma_\ell
  \nonumber\\
  &&-\nu_T\alpha_{0T}\left(\frac{gp_0}{R_d\bar
T(p_0)}\right)^2\left.\int_{\mathcal M'}
  (T_{b0}-T)T d\mathcal M'\right|_{p_0}\nonumber\\
&\geq&\mu_T\|\nabla_hT \|^2+\nu_T\|\partial_pT \|_w^2
-\mu_T\int_{\Gamma_\ell}\alpha_{\ell T}T_{b\ell}T
d\Gamma_\ell\nonumber\\
&&-\nu_T\alpha_{0T}\left.\left(\frac{gp_0}{R_d\bar
T(p_0)}\right)^2\int_{\mathcal M'}
  T_{b0}T d\mathcal M'\right|_{p_0}.\nonumber
\end{eqnarray}
By the trace inequality, the boundary integrals in the above inequality can be bounded as
\begin{eqnarray*}
 &&\mu_T\int_{\Gamma_\ell}\alpha_{\ell T}T_{b\ell}T
d\Gamma_\ell+\nu_T\alpha_{0T}\left(\frac{gp_0}{R_d\bar
T(p_0)}\right)^2\left.\int_{\mathcal M'}
  T_{b0}T d\mathcal M'\right|_{p_0}\\
&\leq& C\|T\|_{L^2(\partial\mathcal M)}\leq C\|T\|_{H^1(\mathcal M)}
\leq \frac{\mu_T}{4}\|\nabla_hT \|^2+\frac{\nu_T}{4}\|\partial_pT \|_w^2
+C(1+\|T\|^2).
\end{eqnarray*}
Therefore, we have
\begin{equation}
  -\int_{\mathcal M}T \mathcal D^T Td\mathcal M\geq
  \frac{3\mu_T}{4}\|\corr{\nabla_h}T\|^2+\frac{3\nu_T}{4}
\|\partial_pT\|_w^2-C(1+\|T\|^2).\label{LI-2}
\end{equation}

Multiplying (\ref{AT}) with $T$ and using (\ref{LI-2}) we get
\begin{eqnarray*}
&&\frac{1}{2}\frac{d}{dt}\|T\|^2
+ \frac{3\mu_T}{4}\|\nabla_h T\|^2 +\frac{3\nu_T}{4}\|\pa_p T\|_w^2
\nonumber\\
&\leq& \int_\M \frac{R}{c_p}\frac{\omega}{p}T^2 d\M
+ \int_\M \frac{L}{c_p}(S_{cd}-S_{ev,\varepsilon})  T d\M,
\end{eqnarray*}
from which, recalling (\ref{bddsource}), we obtain
\begin{equation}
\frac{d}{dt}\|T \|^2
+ \frac{3\mu_T}{2}\|\nabla_h T \|^2 +\frac{3\nu_T}{2}\|\pa_p T \|_w^2
\leq C\int_{\mathcal M}(|\omega|T^2+1+T^2)d\mathcal M.
\label{LI-4}
\end{equation}
By Lemma \ref{lemlad} from the Appendix and Young's inequality, we deduce
\begin{align}
C\int_{\mathcal M}&|\omega|T^2d\mathcal M=\int_\mathcal M\left|\int_p^{p_0}\nabla_h\cdot\mathbf{u}
dp'\right|T^2d\mathcal M\nonumber\\
\leq&\  C\int_{\mathcal M'} \int_{p_1}^{p_0}|\nabla_h\mathbf{u}|dp\int_{p_1}^{p_0}T^2dp d\mathcal M' \leq C\|\nabla_h\mathbf{u}\|\|T\|(\|T\|+\|\nabla_hT\|)\nonumber\\
\leq&\ \frac{\mu_T}{2}\|\nabla_hT\|^2+C(1+\|\nabla_h\mathbf{u}\|^2) \|T\|^2,\nonumber
\end{align}
which, substituted into (\ref{LI-4}), leads to
$$
\frac{d}{dt}\|T \|^2
+  \mu_T \|\nabla_h T \|^2 + \nu_T \|\pa_p T \|_w^2
\leq C(1+\|\nabla_h\mathbf{u}\|_2^2)(1+\|T\|^2),
$$
from which, by the Gronwall inequality and Proposition \ref{basic}, the conclusion follows.
\end{proof}

We have the following $L^\infty(L^2)$  estimate on the vertical derivative of the velocity, which was first established by Cao and Titi in \cite{CT} for the primitive equations without coupling to the
moisture equations.

\begin{prop}
\label{est.u.pu}
Let Assumption \ref{ass} hold, then there exists a function $K_3(t)$, which depends on the initial and boundary data, and is continuous for all $t\geq 0$, such that
\begin{equation*}
 \sup_{0\leq t\leq\mathcal T}
(\|\mathbf{u}\|_{L^6}^6+\|\pa_p {\mathbf{u}}\|^2) + \int_0^{\mathcal T}(\|\pa_p\nabla_h\mathbf{u}\|^2 + \|\pa_p^2\mathbf{u}\|_w^2)dt
\leq K_3(\mathcal T),
\end{equation*}
for any $\mathcal T\in[0,\mathcal T_{\text{max}})$.
\end{prop}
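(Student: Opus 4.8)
The plan is to adapt the barotropic--baroclinic splitting of Cao and Titi \cite{CT}. Write $\bar{\mathbf u}=\frac{1}{p_0-p_1}\int_{p_1}^{p_0}\mathbf u\,dp$ for the barotropic (vertically averaged) mode and $\tilde{\mathbf u}=\mathbf u-\bar{\mathbf u}$ for the baroclinic deviation; then $\int_{p_1}^{p_0}\tilde{\mathbf u}\,dp=0$, and, since $\omega|_{\Gamma_0}=\omega|_{\Gamma_1}=0$ together with $\partial_p\omega=-\nabla_h\cdot\mathbf u$, one gets $\nabla_h\cdot\bar{\mathbf u}=0$. Averaging \eqref{Au.1} over $p$ and subtracting yields evolution equations for $\bar{\mathbf u}$ and $\tilde{\mathbf u}$. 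The decisive structural point, exactly as in \cite{CT}, is that the surface geopotential $\nabla_h\Phi_s$ is independent of $p$ and hence is \emph{absent} from the baroclinic equation, which inherits from the geopotential only $\nabla_h\int_p^{p_0}\frac R\sigma T\,d\sigma$ minus its vertical mean -- a quantity bounded pointwise in $p$ by $C\int_{p_1}^{p_0}|\nabla_hT(\cdot,\cdot,\sigma)|\,d\sigma$, hence in $L^2_t$ by Proposition \ref{L2T}.

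For the $L^6$ bound I would first test the baroclinic equation with $|\tilde{\mathbf u}|^4\tilde{\mathbf u}$. Integration by parts (using incompressibility, $\nabla_h\cdot\bar{\mathbf u}=0$, and the boundary conditions) leaves on the left $\frac16\frac{d}{dt}\|\tilde{\mathbf u}\|_{L^6}^6$ together with coercive terms dominating $\|\nabla_h|\tilde{\mathbf u}|^3\|^2$ and $\|\partial_p|\tilde{\mathbf u}|^3\|_w^2$, while the advective contributions (self-interaction of $\tilde{\mathbf u}$, interactions with $\bar{\mathbf u}$ and with $\omega$) and the geopotential contribution are controlled by H\"older's inequality, the Ladyzhenskaya-type inequality of Lemma \ref{lemlad} and Young's inequality: one absorbs the top-order factors into the coercive terms, the cost being a multiplier of the form $C\big(1+\|\nabla_h\mathbf u\|^2+\|\nabla_hT\|^2\big)$ in front of $1+\|\tilde{\mathbf u}\|_{L^6}^6$. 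Since $\int_0^{\mathcal T}(\|\nabla_h\mathbf u\|^2+\|\nabla_hT\|^2)\,dt<\infty$ by Propositions \ref{basic} and \ref{L2T}, Gronwall's inequality gives $\tilde{\mathbf u}\in L^\infty(0,\mathcal T;L^6(\mathcal M))$ and the space-time bound on $|\tilde{\mathbf u}|^3$ in $H^1$. For the barotropic mode one tests its equation with $-\Delta_h\bar{\mathbf u}$: the surface-geopotential term $\int_{\mathcal M'}\nabla_h\Phi_s\cdot\Delta_h\bar{\mathbf u}\,dx\,dy$ vanishes after integration by parts thanks to $\nabla_h\cdot\bar{\mathbf u}=0$ and the lateral conditions $\bar{\mathbf u}\cdot\mathbf n=0$, $\partial_{\mathbf n}\bar{\mathbf u}\times\mathbf n=0$; the baroclinic self-advection forcing is handled with the $L^6$ bound just obtained, the remaining terms being standard, and a two-dimensional Navier--Stokes--type energy argument then gives $\bar{\mathbf u}\in L^\infty(0,\mathcal T;H^1(\mathcal M'))$, hence $\bar{\mathbf u}\in L^\infty(0,\mathcal T;L^6(\mathcal M))$ by the $2$D Sobolev embedding. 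Adding the two pieces gives $\mathbf u\in L^\infty(0,\mathcal T;L^6(\mathcal M))$.

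With this in hand, I would estimate $\mathbf w:=\partial_p\mathbf u$ by differentiating \eqref{Au.1} in $p$ (so that $\nabla_h\partial_p\Phi=-\frac Rp\nabla_hT$ and $\partial_p\omega=-\nabla_h\cdot\mathbf u$): the equation for $\mathbf w$ has forcing made of the vortex-stretching term $(\mathbf w\cdot\nabla_h)\mathbf u$, a term $(\nabla_h\cdot\mathbf u)\mathbf w$, lower-order contributions from the $p$-dependent weight in $\mathcal D^{\mathbf u}$, and $-\frac Rp\nabla_hT$, while the Coriolis term becomes $f\mathbf w^\perp$, which is orthogonal to $\mathbf w$. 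Testing with $\mathbf w$, the boundary terms from the vertical dissipation are absorbed using $\partial_p\mathbf u|_{\Gamma_1}=0$ and $\partial_p\mathbf u|_{\Gamma_0}=-\alpha_{\mathbf u}\mathbf u$ (together with $\mathbf u\in L^\infty_tL^2$), and $-\frac Rp\nabla_hT$ by Young's inequality and $\int_0^{\mathcal T}\|\nabla_hT\|^2\,dt<\infty$. The crux is the cubic term $\int_{\mathcal M}(\mathbf w\cdot\nabla_h)\mathbf u\cdot\mathbf w\,d\mathcal M$ (and, after using incompressibility, $\int_{\mathcal M}(\nabla_h\cdot\mathbf u)|\mathbf w|^2\,d\mathcal M$): as in \cite{CT}, one integrates by parts in $p$ to shift a $p$-derivative off one $\mathbf w$, then invokes the anisotropic Ladyzhenskaya inequality (Lemma \ref{lemlad}) and the $L^6$ bound on $\mathbf u$, so that the highest-order factors $\|\nabla_h\mathbf w\|$ and $\|\partial_p\mathbf w\|_w$ are absorbed into the dissipation while the remaining factor carries an $L^1_t$-in-time coefficient; Gronwall's inequality then yields the stated bounds on $\sup_{0\le t\le\mathcal T}\|\mathbf w\|^2$ and on $\int_0^{\mathcal T}(\|\nabla_h\mathbf w\|^2+\|\partial_p\mathbf w\|_w^2)\,dt$. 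I expect the main obstacles to be, first, the bookkeeping of the barotropic--baroclinic decomposition -- in particular checking that $\nabla_h\Phi_s$ truly drops out both from the baroclinic equation and, under the test function $-\Delta_h\bar{\mathbf u}$, from the barotropic $H^1$ estimate -- and, second, closing the supercritical vortex-stretching term in the $\partial_p\mathbf u$ estimate, which is precisely where the $L^\infty_tL^6$ control of $\mathbf u$ is indispensable; the moisture coupling, by contrast, enters only through $\nabla_hT$ and causes no extra trouble here.
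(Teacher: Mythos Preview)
Your proposal is correct and follows the same barotropic--baroclinic route as the paper, which in fact just quotes the relevant differential inequalities from \cite{CT} (pp.\ 257, 259, 260) and notes that the only new ingredient, the $T$-dependent piece of the geopotential, is controlled via Propositions~\ref{basic} and~\ref{L2T}.

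One technical caveat on the $\partial_p\mathbf u$ step: you write that for the vortex-stretching term $\int_{\mathcal M}(\mathbf w\cdot\nabla_h)\mathbf u\cdot\mathbf w\,d\mathcal M$ you would ``integrate by parts in $p$ to shift a $p$-derivative off one $\mathbf w$''. Taken literally this generates a residual $\int_{\mathcal M}(\mathbf u\cdot\nabla_h)\mathbf u\cdot\partial_p\mathbf w\,d\mathcal M$, which would need $\|\nabla_h\mathbf u\|_{L^3}$ --- not available at this stage. The maneuver that actually closes (and underlies the inequality quoted from \cite{CT}) is to integrate by parts \emph{horizontally}, shifting the derivative from $\mathbf u$ onto $\mathbf w$: the resulting terms are of the form $\int_{\mathcal M}|\mathbf u|\,|\mathbf w|\,|\nabla_h\mathbf w|\,d\mathcal M\le \|\mathbf u\|_{L^6}\|\mathbf w\|_{L^3}\|\nabla_h\mathbf w\|$, and then interpolation $\|\mathbf w\|_{L^3}\le C\|\mathbf w\|^{1/2}\|\mathbf w\|_{H^1}^{1/2}$ together with Young's inequality produces the multiplier $C\|\mathbf u\|_{L^6}^4$ in front of $\|\mathbf w\|^2$, so Gronwall applies. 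Apart from this detail your sketch matches the paper.
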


\begin{proof}
The proof is adapted from that in \cite{CT}. Decompose the velocity into the barotropic and baroclinic modes $\bar{\mathbf{u}}$ and $\widetilde{\mathbf{u}}$ as follows
$$
\mathbf{u}=\bar{\mathbf{u}}+\widetilde{\mathbf{u}},
$$
where
$\bar{f}:=\frac{1}{p_0-p_1}\int_{p_1}^{p_0}fdp$.
For the $L^6$-norm of $\widetilde{\mathbf u}$, following the arguments in \cite{CT}, with tiny modifications on dealing with the integral involving $\Phi$, we have (see the inequality above (66) in page 257 of \cite{CT})
\begin{eqnarray*}
&&\frac{d}{dt}\|\widetilde{\mathbf{u}}\|_{L^6}^6 +  \int_\M \big(\mu_\mathbf{u}|\nabla_h  \widetilde{\mathbf{u}}|^2|\widetilde{\mathbf{u}}|^4  + \nu_\mathbf{u}  |\pa_p  \widetilde{\mathbf{u}}|^2|\widetilde{\mathbf{u}}|^4\big)d\M\nonumber\\
& \leq& C\big(\|\bar{\mathbf{u}}\|^2\|\nabla\bar{\mathbf{u}}\|^2 +\|\nabla\widetilde{\mathbf{u}}\|^2  +\|\widetilde{\mathbf{u}}\|^2 \big)\|\widetilde{\mathbf{u}}\|_6^6 + C\|\bar T\|^2\|\nabla\bar T\|^2.
\end{eqnarray*}
Noticing $\|\bar f\| +  \|\tilde f\|\leq
C\|f\|$, $\|\nabla \bar f\|+\|\nabla\tilde
f\|\leq C\|\nabla f\|$,  it follows from Proposition \ref{basic},  Proposition \ref{L2T} and the Gronwall inequality that
\begin{equation}
\sup_{0\leq t\leq\mathcal T}
\|\widetilde{\mathbf{u}}\|_{L^6}^6 +  \int_0^{\mathcal T} \int_\M \big( |\nabla_h  \widetilde{\mathbf{u}}|^2|\widetilde{\mathbf{u}}|^4   +  |\pa_p  \widetilde{\mathbf{u}}|^2|\widetilde{\mathbf{u}}|^4  \big) d\M dt\leq K_3'(\mathcal T),\label{est.u.L6}
\end{equation}
for a continuous function $K_3'$ on $[0,\infty)$, which depends on the initial and boundary data.
For the barotropic mode $\bar{\mathbf{u}}$ it holds that (see the first inequality in page 259 of \cite{CT})
\begin{equation*}
\frac{d}{dt}\|\nabla_h\bar{\mathbf{u}}\|^2 + \mu_\mathbf{u}\|\Delta_h\bar{\mathbf{u}}\|^2 \leq  C \|\bar{\mathbf{u}}\|^2 \|\nabla_h\bar{\mathbf{u}}\|^4+C\Big( \|\nabla_h \bar{\mathbf{u}}\|^2  + \int_\M |\nabla_h  \widetilde{\mathbf{u}}|^2|\widetilde{\mathbf{u}}|^4 d\M + \|\bar{\mathbf{u}}\|^2 \Big),
\end{equation*}
from which, by Proposition \ref{basic} and (\ref{est.u.L6}), it follows from the Gronwall inequality that
\begin{equation}\label{est.nabla.baru}
\sup_{0\leq t\leq\mathcal T}
\|\nabla_h\bar{\mathbf{u}}\|^2 + \int_0^{\mathcal T}\|\Delta_h\bar{\mathbf{u}}\|^2dt\leq K_3''(\mathcal T),
\end{equation}
for a continuous function $K_3''$ on $[0,\infty)$.
Note that by the Sobolev embedding inequality and Proposition \ref{basic}
one can easily obtain from (\ref{est.u.L6}) and (\ref{est.nabla.baru}) that
\begin{equation*}
  \sup_{0\leq t\leq\mathcal T}\|\mathbf{u}\|_{L^6}\leq K_3'''(\mathcal T),
  \label{est.L6}
\end{equation*}
for a continuous function $K_3'''$ on $[0,\infty)$ depending on the initial and boundary data.
For the vertical gradient we have the following bound (see the inequality above (75) in page 260 of \cite{CT})
$$
\frac{d}{dt}\|\pa_p {\mathbf{u}}\|^2 + \mu_\mathbf{u}\|\pa_p\nabla_h\mathbf{u}\|^2 + \nu_\mathbf{u}\|\pa_p^2\mathbf{u}\|_w^2
\leq  C (\|\nabla_h\bar{\mathbf{u}}\|^4 +  \|\widetilde{\mathbf{u}}\|_{L^6}^4)\|\pa_p \mathbf{u}\|^2 + C\|T\|^2  \,,
$$
from which, by (\ref{est.u.L6})--(\ref{est.nabla.baru}), Proposition \ref{L2T} and the Gronwall inequality it follows that
\begin{equation*}
 \sup_{0\leq t\leq\mathcal T}
\|\pa_p {\mathbf{u}}\|^2 + \int_0^{\mathcal T}(\|\pa_p\nabla_h\mathbf{u}\|^2 + \|\pa_p^2\mathbf{u}\|_w^2)dt
\leq K_3''''(\mathcal T),
\end{equation*}
for a continuous function $K_3''''$ on $[0,\infty)$ depending on the initial and boundary data, which  completes the proof.
\end{proof}

We are now ready to establish the estimates for the vertical derivative of the moisture components.

\begin{prop}
\label{prop.est.pq}
Let Assumption \ref{ass} hold, then there exists a function $K_4(t)$, which depends on the initial and boundary data, and is continuous for all $t\geq 0$, such that
  \begin{eqnarray*}
  \sup_{0\leq t\leq\mathcal T}\|\partial_pq_j\|^2
+\int_0^\mathcal T\|\nabla\partial_pq_j\|^2dt\leq K_4(\mathcal T),\quad j\in\{v,c,r\},
\end{eqnarray*}
for any $\mathcal T\in[0,\mathcal T_{\text{max}})$.
\end{prop}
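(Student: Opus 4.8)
\sketch
The plan is to bound $\psi_j:=\partial_pq_j$, $j\in\{v,c,r\}$, in $L^\infty(0,\mathcal T;L^2(\M))\cap L^2(0,\mathcal T;H^1(\M))$ by differentiating \eqref{Aqv}--\eqref{Aqr} in $p$, testing with $\psi_j$, summing over $j$, and closing by Gronwall. Writing $a(p)=\big(\tfrac{gp}{R_d\bar T}\big)^2$, differentiating \eqref{Aqv} in $p$ gives
\[
\partial_t\psi_v+\mathbf u\cdot\nabla_h\psi_v+\omega\partial_p\psi_v+\partial_p\mathbf u\cdot\nabla_hq_v+(\partial_p\omega)\psi_v=\partial_p(S_{ev,\varepsilon}-S_{cd})+\mathcal D^{q_v}\psi_v+\nu_{q_v}(a'\partial_p\psi_v+a''\psi_v),
\]
and likewise for $q_c,q_r$, with the additional transport-type term $V\partial_p(b'q_r+b\partial_pq_r)$, $b(p)=\tfrac{p}{R_d\bar T}$, in the $q_r$ equation. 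Testing with $\psi_j$ over $\M$: the time derivative yields $\tfrac12\tfrac{d}{dt}\|\psi_j\|^2$; the genuine transport terms $\mathbf u\cdot\nabla_h\psi_j+\omega\partial_p\psi_j$ cancel after integration by parts thanks to \eqref{Au.3}, $\omega|_{\Gamma_0\cup\Gamma_1}=0$ and $\mathbf u\cdot\mathbf n|_{\Gamma_\ell}=0$; and $-\int_\M\mathcal D^{q_j}\psi_j\cdot\psi_j\,d\M$ produces $\mu_{q_j}\|\nabla_h\psi_j\|^2+\nu_{q_j}\|\partial_p\psi_j\|_w^2$ up to boundary contributions and the lower-order terms $\nu_{q_j}\int(a'\partial_p\psi_j+a''\psi_j)\psi_j$, absorbed by Young's inequality. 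The source contributions $\int_\M\partial_p(S_{ev,\varepsilon}-S_{cd})\psi_j\,d\M$ are handled without differentiating the (only H\"older-in-$q_r$) sources: one integration by parts in $p$ turns them into $-\int_\M(S_{ev,\varepsilon}-S_{cd})\partial_p\psi_j\,d\M$ plus a $\Gamma_0$-integral, both bounded by $\epsilon\|\partial_p\psi_j\|^2+C$ using \eqref{bddsource} and $q_j\le q_j^*$ (Proposition \ref{bddq}). The falling-rain term is treated in the same way and contributes $\epsilon\|\partial_p\psi_r\|^2+C\|\psi_r\|^2+C$.

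There remain the two terms coupling moisture to the velocity, $\int_\M\partial_p\mathbf u\cdot\nabla_hq_j\,\psi_j\,d\M$ and $\int_\M(\partial_p\omega)\psi_j^2\,d\M=-\int_\M(\nabla_h\cdot\mathbf u)\psi_j^2\,d\M$. For the first, one horizontal integration by parts moves $\nabla_h$ off $q_j$; the $\Gamma_\ell$-boundary integral vanishes because $\partial_p\mathbf u\cdot\mathbf n=\partial_p(\mathbf u\cdot\mathbf n)=0$ there, and the remainder is bounded, using $\|q_j\|_{L^\infty}\le q_j^*$, by $\epsilon\|\nabla_h\psi_j\|^2+C\|\psi_j\|^2+C(\|\partial_p\nabla_h\mathbf u\|^2+\|\partial_p\mathbf u\|^2)$, the last two terms being integrable in $t$ by Proposition \ref{est.u.pu}. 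The term $\int_\M(\nabla_h\cdot\mathbf u)\psi_j^2\,d\M$ is the main point, since $\nabla_h\mathbf u$ is so far controlled only in $L^2(0,\mathcal T;L^2)$ and a naive H\"older estimate does not close; I would split $\mathbf u=\bar{\mathbf u}+\widetilde{\mathbf u}$ as in the proof of Proposition \ref{est.u.pu}. For the barotropic part, $\int_\M|\nabla_h\bar{\mathbf u}|\psi_j^2\,d\M=\int_{\M'}|\nabla_h\bar{\mathbf u}|\big(\int_{p_1}^{p_0}\psi_j^2\,dp\big)d\M'$, and estimating the inner factor in $L^2(\M')$ by the two-dimensional Ladyzhenskaya inequality (Lemma \ref{lemlad}) slice-wise gives $\le C\|\nabla_h\bar{\mathbf u}\|\,\|\psi_j\|\big(\|\psi_j\|+\|\nabla_h\psi_j\|\big)$, with $\|\nabla_h\bar{\mathbf u}\|^2$ uniformly bounded in $t$ by \eqref{est.nabla.baru}. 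For the baroclinic part, one uses that $\nabla_h\widetilde{\mathbf u}$ and $\partial_p\nabla_h\widetilde{\mathbf u}=\partial_p\nabla_h\mathbf u$ both lie in $L^2(0,\mathcal T;L^2)$ (Propositions \ref{basic} and \ref{est.u.pu}), so $\sup_p\|\nabla_h\widetilde{\mathbf u}(\cdot,p)\|_{L^2(\M')}^2\le C\|\nabla_h\widetilde{\mathbf u}\|\big(\|\nabla_h\widetilde{\mathbf u}\|+\|\partial_p\nabla_h\mathbf u\|\big)$ is integrable in $t$; combined with the slice-wise bound $\int_{p_1}^{p_0}\|\psi_j(\cdot,p)\|_{L^4(\M')}^2dp\le C\|\psi_j\|(\|\psi_j\|+\|\nabla_h\psi_j\|)$, this yields $\int_\M|\nabla_h\widetilde{\mathbf u}|\psi_j^2\,d\M\le\epsilon\|\nabla_h\psi_j\|^2+C\big(1+\sup_p\|\nabla_h\widetilde{\mathbf u}(\cdot,p)\|_{L^2(\M')}^2\big)\|\psi_j\|^2$ with an $L^1$-in-$t$ coefficient.

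It remains to control the boundary terms from the vertical diffusion. Those on $\Gamma_1$ vanish since $\partial_pq_j|_{\Gamma_1}=0$; those on $\Gamma_\ell$ contribute, because $\partial_n\psi_j=\partial_p(\partial_nq_j)=-\alpha_{\ell j}\psi_j$ there, the favorably signed term $\mu_{q_j}\alpha_{\ell j}\int_{\Gamma_\ell}\psi_j^2$ up to lower-order pieces controlled by the trace inequality. The delicate one is $\nu_{q_j}a(p_0)\alpha_{0j}\int_{\M'}\partial_p^2q_j\,(q_{b0j}-q_j)\big|_{p_0}d\M'$, which sees $\partial_p^2q_j$ on $\Gamma_0$; I would substitute for it using equation \eqref{Aqv} (resp.\ \eqref{Aqc}, \eqref{Aqr}) restricted to $p=p_0$, where $\omega=0$, replacing $\nu_{q_j}a(p_0)\partial_p^2q_j|_{p_0}$ by $\partial_tq_j$, $\mathbf u\cdot\nabla_hq_j$, $\Delta_hq_j$, $\partial_pq_j$ and the (bounded) sources at $p=p_0$. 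The $\partial_tq_j|_{p_0}$ contribution becomes $\tfrac{\alpha_{0j}}{2}\tfrac{d}{dt}\int_{\M'}(q_{b0j}-q_j)^2|_{p_0}$ and is moved into a uniformly bounded modification of the energy; the $\mathbf u\cdot\nabla_hq_j|_{p_0}$ and $\Delta_hq_j|_{p_0}$ contributions, after a further horizontal integration by parts on $\M'$ and the trace estimates $\|\mathbf u|_{p_0}\|_{L^2(\M')}^2\le C(\|\mathbf u\|^2+\|\mathbf u\|\|\partial_p\mathbf u\|)$ and $\|\nabla_hq_j|_{p_0}\|_{L^2(\M')}^2\le C(\|\nabla_hq_j\|^2+\|\nabla_hq_j\|\|\nabla_h\psi_j\|)$, are bounded by $\epsilon\|\nabla_h\psi_j\|^2+C(1+\|\nabla q_j\|^2)$, whose forcing is in $L^1(0,\mathcal T)$ by Propositions \ref{basic}, \ref{est.u.pu} and \ref{H1q}; the rest is bounded by $C$. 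Collecting all estimates and summing over $j$ yields $\tfrac{d}{dt}E(t)+c\sum_j\|\nabla\psi_j\|^2\le G(t)\big(1+E(t)\big)+H(t)$ with $E(t)\simeq\sum_j\|\psi_j\|^2$ and $G,H\in L^1(0,\mathcal T)$ having norms continuous in $\mathcal T$; since $q_{j0}\in H^1(\M)$, Gronwall's inequality gives both asserted bounds with a continuous $K_4$. The crux is the estimate of $\int_\M(\nabla_h\cdot\mathbf u)\psi_j^2\,d\M$, where the barotropic/baroclinic splitting, the extra regularity $\partial_p\nabla_h\mathbf u\in L^2(0,\mathcal T;L^2)$ from Proposition \ref{est.u.pu}, and anisotropic slice-wise Ladyzhenskaya estimates are all needed to reduce the coefficient to one that is merely integrable in time; the $\Gamma_0$ boundary term is the other nontrivial technical point.
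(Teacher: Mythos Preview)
Your argument is correct and leads to the same estimate, but the paper's proof is organized more economically in two places.

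First, instead of differentiating \eqref{Aqr} in $p$ and testing with $\psi_r=\partial_pq_r$, the paper tests the undifferentiated equation with $-\partial_p^2q_r$. After one integration by parts in $p$, the time term already produces
\[
\frac12\frac{d}{dt}\|\partial_pq_r\|^2-\alpha_{0r}\int_{\mathcal M'}\partial_tq_r(q_{b0r}-q_r)\,d\mathcal M'\Big|_{p_0},
\]
so the $\Gamma_0$ contribution appears directly as the time derivative of a bounded boundary functional, with no need to substitute the equation at $p=p_0$ to eliminate $\partial_p^2q_r|_{p_0}$. Your substitution step is legitimate and in fact reconstructs exactly the same boundary integrals (the $\Delta_hq_r|_{p_0}$, $\mathbf u\cdot\nabla_hq_r|_{p_0}$, and $\partial_tq_r|_{p_0}$ pieces), but it is a detour.

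Second, for the crucial term $\int_{\mathcal M}(\nabla_h\cdot\mathbf u)(\partial_pq_r)^2\,d\mathcal M$, the paper avoids the barotropic/baroclinic splitting altogether. Since $\omega$ vanishes on both $\Gamma_0$ and $\Gamma_1$, one has $\int_{p_1}^{p_0}\nabla_h\cdot\mathbf u\,dp=0$, hence
\[
|\nabla_h\cdot\mathbf u(x,y,p)|\le\int_{p_1}^{p_0}|\nabla_h\partial_p\mathbf u|\,dq,
\]
and a single application of Lemma \ref{lemlad} gives
\[
\int_{\mathcal M}|\nabla_h\cdot\mathbf u|(\partial_pq_r)^2\,d\mathcal M\le C\|\nabla_h\partial_p\mathbf u\|\,\|\partial_pq_r\|\big(\|\partial_pq_r\|+\|\nabla_h\partial_pq_r\|\big),
\]
with $\|\nabla_h\partial_p\mathbf u\|^2\in L^1(0,\mathcal T)$ by Proposition \ref{est.u.pu}. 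In your splitting the barotropic divergence $\nabla_h\cdot\bar{\mathbf u}$ in fact vanishes identically, so only the baroclinic part matters; your slice-wise bound via $\sup_p\|\nabla_h\widetilde{\mathbf u}(\cdot,p)\|_{L^2(\mathcal M')}$ is then just a repackaging of the same pointwise inequality. Both routes buy the same thing: the coefficient in front of $\|\partial_pq_r\|^2$ is merely $L^1$ in time, which suffices for Gronwall.
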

\begin{proof}
We only prove the estimate for $q_r$, since the derivation for the other moisture quantities follows the same steps.
Multiplying equation (\ref{Aqr}) by $-\partial_p^2q_r$ and integrating over $\mathcal M$ yield
\begin{eqnarray}
&&-\int_\mathcal M\partial_tq_r\partial_p^2q_rd\mathcal M+\int_\mathcal M
\mathcal D^{q_r}q_r\partial_p^2q_rd\mathcal M\nonumber\\
&=&\int_\mathcal M(\mathbf{u}\cdot\nabla_hq_r+\omega\partial_pq_r)\partial_p^2q_rd\mathcal M+V\int_\mathcal M\partial_p\left(\frac{pq_r}{R_d\bar T}\right)\partial_p^2q_rd\mathcal M\nonumber\\
&&+\int_\mathcal M(S_{ev,\varepsilon}-S_{ac}-S_{cr})\partial_p^2q_rd\mathcal M\,,
\label{est.pqr}
\end{eqnarray}
Recalling (\ref{bddsource}), it follows from the H\"older and Young inequalities that
\begin{eqnarray}
  V\int_\mathcal M\partial_p\left(\frac{pq_r}{R_d\bar T}\right)\partial_p^2q_rd\mathcal M+\int_\mathcal M(S_{ev,\varepsilon}-S_{ac}-S_{cr})\partial_p^2q_rd\mathcal M\nonumber\\
\leq\ \frac{\nu_{q_r}}{16}\|\partial_p^2q_r\|^2+C(\|\partial_pq_r\|^2+1).
\label{soupq}
\end{eqnarray}

For the fist term on the left-hand side of (\ref{est.pqr}), using the boundary conditions (\ref{bound.0}) and (\ref{bound.1}), it follows from integration by parts that
\begin{eqnarray}
&&-\int_\mathcal M\partial_tq_r\partial_p^2q_rd\mathcal M\nonumber\\
&=&-\left.\int_{\mathcal M'}\partial_tq_r\partial_pq_rd\mathcal M'
\right|_{p_0}+\int_\mathcal M\partial_t\partial_pq_r\partial_pq_r d\mathcal M\nonumber\\
&=&\frac12\frac{d}{dt}\|\partial_pq_r\|^2-\alpha_{0r}\left.\int_{\mathcal M'}\partial_tq_r(q_{b0r}-q_r)d\mathcal M'\right|_{p_0}\nonumber\\
&=&\frac{d}{dt}\left(\frac{\|\partial_pq_r\|^2}{2}+\alpha_{0r}
\left.\int_{\mathcal M'}\left(\frac{q_r^2}{2}-q_rq_{b0r}\right) d\mathcal M'\right|_{p_0}\right)+\alpha_{0r}\left.\int_{\mathcal M'} q_r\partial_tq_{b0r}d\mathcal M'\right|_{p_0}\nonumber\\
&\geq&\frac{d}{dt}\left(\frac{\|\partial_pq_r\|^2}{2}+\alpha_{0r}
\left.\int_{\mathcal M'}\left(\frac{q_r^2}{2}-q_rq_{b0r}\right) d\mathcal M'\right|_{p_0}\right)-C(\|\partial_pq_r\|+1),
\label{tpqr}
\end{eqnarray}
where in the last step we have used Lemma \ref{lem} (see Appendix), Proposition \ref{bddq} and the H\"older inequality to estimate
\begin{eqnarray*}
  \left|\left.\int_{\mathcal M'} q_r\partial_tq_{b0r}d\mathcal M'\right|_{p_0}\right|
\leq C(\|q_r\|_{L^1(\mathcal M)}+\|\partial_pq_r\|_{L^1(\mathcal M)})
\leq C(1+\|\partial_pq_r\|).
\end{eqnarray*}
We next bound  the integrals involving the diffusion terms. It should be noticed that the following derivations are formal, since the regularity of $q_r$ does not guarantee the validity of the integrals on the boundary $\Gamma_0=\M'\times\{p_0\}$ directly. However, due to the $C^2(\overline\M\times[0,T])$ functions, enjoying the boundary conditions (\ref{bound.0})--(\ref{bound.ll}) for $q_r$, being dense in the space $\{f|f\in L^2(0,\mathcal T; H^2(\M)), \partial_tf\in L^2(0,\mathcal T; L^2(\M))\}$, which $q_r$ belongs to, one can rigorously justify the desired estimates (\ref{df.npqr}) below for $q_r$ in the standard way ( i.e., choosing a sequence $\{q_{rn}\}_{n=1}^\infty$ in $C^2(\overline\M\times[0,\mathcal T])$ satisfying the corresponding boundary conditions (\ref{bound.0})--(\ref{bound.ll}), proving that (\ref{df.npqr}) holds for each $q_{rn}$, and passing the limit $n\rightarrow\infty$).

Using the boundary conditions (\ref{bound.1}), it follows from integration by parts that
\begin{eqnarray}
&&\int_\mathcal M\mathcal D^{q_r}q_r\partial_p^2q_rd\mathcal M\nonumber\\
&=&\int_\mathcal M\left[\mu_{q_r}\Delta_hq_r+\nu_{q_r}\partial_p\left(\left(
\frac{gp}{R_d\bar T}\right)^2\partial_pq_r\right)\right]\partial_p^2 q_rd\mathcal M\nonumber\\
&=&\mu_{q_r}\left.\int_{\mathcal M'}\Delta_hq_r\partial_pq_rd\mathcal M'\right|_{p_0}-\mu_{q_r}\int_\mathcal M\partial_p\Delta_hq_r\partial_p q_rd\mathcal M\nonumber\\
&&+\nu_{q_r}\|\partial_p^2q_r\|_w^2+\nu_{q_r}\int_\mathcal M\partial_p \left(\frac{gp}{R_d\bar T}\right)^2\partial_pq_r\partial_p^2q_rd\mathcal M\nonumber\\
&=&\mu_{q_r}\left(\left.\int_{\mathcal M'}\Delta_hq_r\partial_pq_rd\mathcal M'
\right|_{p_0}- \int_{\Gamma_\ell}\partial_p\partial_{\textbf{n}}q_r
\partial_pq_rd\Gamma_\ell + \|\nabla_h\partial_pq_r\|^2\right)\nonumber\\
&&+\nu_{q_r}\|\partial_p^2q_r\|_w^2+\nu_{q_r}\int_\mathcal M\partial_p
\left(\frac{gp}{R_d\bar T}\right)^2\partial_pq_r\partial_p^2q_rd\mathcal M\nonumber\\
&\geq&\mu_{q_r}\left(\left.\int_{\mathcal M'}\Delta_hq_r\partial_pq_rd\mathcal M'
\right|_{p_0}- \int_{\Gamma_\ell}\partial_p\partial_{\textbf{n}}q_r
\partial_pq_rd\Gamma_\ell \right)\nonumber\\
&&+\mu_{q_r}\|\nabla_h\partial_pq_r\|^2+\frac{3\nu_{q_r}}{4}
\|\partial_p^2q_r\|_w^2-C\|\partial_pq_r\|^2. \label{df.npqr0}
\end{eqnarray}
Integrating by parts and using the boundary conditions (\ref{bound.0}) and (\ref{bound.ll}),
we deduce
\begin{eqnarray}
&&\left.\int_{\mathcal M'}\Delta_hq_r\partial_pq_rd\mathcal M'
\right|_{p_0}=\left.\int_{\partial\mathcal M'}\partial_\textbf{n}q_r\partial_p
q_rd\partial\mathcal M'\right|_{p_0}-\left.\int_{\mathcal M'}\nabla_hq_r\cdot\nabla_h\partial_pq_rd\mathcal M'\right|_{p_0}\nonumber\\
&=&\left.\alpha_{\ell r}\alpha_{0r}\int_{\partial\mathcal M'}
(q_{b\ell r}-q_r)(q_{b0r}-q_r)d\partial\mathcal M'\right|_{p_0}
+\alpha_{0r}\left.\int_{\mathcal M'}\nabla_hq_r\cdot\nabla_h(q_r -q_{b0r})d\mathcal M'\right|_{p_0}
\nonumber\\
&\geq&-\frac{\alpha_{\ell r}\alpha_{0r}}{4}
\left.\int_{\partial\mathcal M'}
(q_{b\ell r}-q_{b0r})^2d\partial\mathcal M'\right|_{p_0}-\frac{\alpha_{0r}}{4}\left.\int_{\mathcal M'}|\nabla_hq_{b0r}|^2d\mathcal M'\right|_{p_0}\geq-C,\label{df.npqr1}
\end{eqnarray}
where we have used
\begin{eqnarray}
\label{est.qbl}&(q_{b\ell r}-q_r)(q_{b0r}-q_r)=\left(q_r-\frac{q_{b\ell r}+q_{b0r}}{2}
\right)^2-\frac{(q_{b\ell r}-q_{b0r})^2}{4}\geq -\frac{(q_{b\ell r}-q_{b0r})^2}{4}, \\
\label{est.qblgrad}&\nabla_hq_r\cdot\nabla_h(q_r -q_{b0r})=\left|\nabla_hq_r-\frac{\nabla_h q_{b0r}}{2}\right|^2-\frac{|\nabla_hq_{b0r}|^2}{4} \geq-\frac{|\nabla_hq_{b0r}|^2}{4}.
\end{eqnarray}
Using the boundary condition (\ref{bound.ll}), one has
\begin{eqnarray}
-\int_{\Gamma_\ell}\partial_p\partial_{\textbf{n}}q_r\partial_pq_rd
\Gamma_\ell&=&\alpha_{\ell r}\int_{\Gamma_\ell}\partial_p(q_r-q_{b\ell r})\partial_pq_rd\Gamma_\ell\nonumber\\
&=&\alpha_{\ell r}\int_{\Gamma_\ell}\left[\left(\partial_pq_r-\frac{\partial_pq_{b\ell r}}{2}\right)^2-\frac{|\partial_pq_{b\ell r}|^2}{4}\right] d\Gamma_\ell
\nonumber\\
&\geq&-\frac{\alpha_{\ell r}}{4}\int_{\Gamma_\ell}|\partial_pq_{b\ell r}|^2 d\Gamma_\ell\geq-C.\label{df.npqr2}
\end{eqnarray}
Substituting (\ref{df.npqr1}) and (\ref{df.npqr2}) into (\ref{df.npqr0})
yields
\begin{equation}
  \int_\mathcal M\mathcal D^{q_r}q_r\partial_p^2q_rd\mathcal M
\geq \frac34\left(\mu_{q_r}\|\nabla_h\partial_pq_r\|^2+\nu_{q_r}
\|\partial_p^2q_r\|_w^2\right)-C(\|\partial_pq_r\|^2+1).
\label{df.npqr}
\end{equation}

For the term involving the convection, it follows from integration by parts and the boundary condition (\ref{bound.1}) that
\begin{eqnarray}
  &&\int_\mathcal M(\mathbf{u}\cdot\nabla_hq_r+\omega\partial_p q_r)
\partial_p^2q_rd\mathcal M\nonumber\\
&=&\left.\int_{\mathcal M'}(\mathbf{u}\cdot\nabla_hq_r+\omega\partial_pq_r)\partial_pq_rd\mathcal M'\right|_{p_0}-\int_\mathcal M(\partial_p\mathbf{u}\cdot\nabla_hq_r-\nabla_h\cdot\mathbf{u}\partial_pq_r)
\partial_pq_r d\mathcal M\nonumber\\
&&-\int_\mathcal M(\mathbf{u}\cdot\nabla_h\partial_pq_r+\omega\partial_p^2q_r)\partial_pq_r d\mathcal M\nonumber\\
&=&\alpha_{0r}\left.\int_{\mathcal M'}\mathbf{u}\cdot\nabla_hq_r(q_{b0r}-q_r)d\mathcal M'\right|_{p_0}            -\int_\mathcal M(\partial_p\mathbf{u}\cdot\nabla_hq_r-\nabla_h\cdot\mathbf{u}\partial_pq_r)
\partial_pq_r d\mathcal M.\label{covpq.0}
\end{eqnarray}
By Proposition \ref{bddq}, Lemma \ref{lem} (see Appendix), and Young's inequality, we obtain
\begin{eqnarray}
&&\alpha_{0r}\left|\left.\int_{\mathcal M'}\mathbf{u}\cdot\nabla_hq_r(q_{b0r}-q_r)d\mathcal M'\right|_{p_0}\right|\nonumber\\
&\leq& C\left.\int_{\mathcal M'}|\mathbf{u}\cdot\nabla_hq_r|d\mathcal M'\right|_{p_0}
\leq C(\|\mathbf{u}\cdot\nabla_hq_r\|_{L^1(\mathcal M)}+\|\partial_p
(\mathbf{u}\cdot\nabla_hq_r)\|_{L^1(\mathcal M)})\nonumber\\
&\leq& C(\|\mathbf{u}\cdot\nabla_hq_r\|_{L^1(\mathcal M)}+\|\partial_p
\mathbf{u}\cdot\nabla_hq_r\|_{L^1(\mathcal M)}+\|\mathbf{u}\cdot\nabla_h
\partial_pq_r\|_{L^1(\mathcal M)})\nonumber\\
&\leq& \frac{\mu_{q_r}}{12}\|\nabla_h\partial_pq_r\|_w^2+ C(\|\mathbf{u}\|^2+\|\nabla_hq_r\|^2+\|\partial_p\mathbf{u}\|^2)
\nonumber\\
&\leq& \frac{\mu_{q_r}}{12}\|\nabla_h\partial_pq_r\|_w^2+ C(\|\nabla_hq_r\|^2+1),
\label{covpq.1}
\end{eqnarray}
where in the last step we have used Proposition \ref{basic} and Proposition \ref{est.u.pu}.
Note that the boundary condition (\ref{bound.ll}) for $\mathbf{u}$
implies $\partial_p\mathbf{u}\cdot\mathbf{n}=0$ on $\Gamma_\ell$. Thanks
to this, by Proposition \ref{bddq} and Proposition \ref{est.u.pu},
it follows from integration by parts and the Young inequality that
\begin{eqnarray}
-\int_\mathcal M\partial_p\mathbf{u}\cdot\nabla_hq_r\partial_pq_rd\mathcal M
&=&\int_\mathcal M(\partial_p\nabla_h\cdot\mathbf{u}\partial_pq_r +\partial_p\mathbf{u}\cdot\nabla_h\partial_pq_r)q_rd\mathcal M
\nonumber\\
&\leq&\frac{\mu_{q_r}}{12}\|\nabla_h\partial_pq_r\|^2+C(\|\partial_p\nabla_h
\mathbf{u}\|^2+\|\partial_pq_r\|^2+1). \label{covpq.2}
\end{eqnarray}
Noticing  $\int_{p_1}^{p_0}\nabla_h\cdot\mathbf{u}dp=0$, we have
\begin{eqnarray}
|\nabla_h\cdot\mathbf{u}(x,y,p)|&=&\left|\frac{1}{p_0-p_1}\int_{p_0}^{p_1}
\nabla_h\cdot\mathbf udq
+\frac{1}{p_0-p_1}\int_{p_1}^{p_0}\int_q^p\nabla_h\cdot\partial_p\mathbf{u}
d\xi dq\right|\nonumber\\
&=&\left|\frac{1}{p_0-p_1}\int_{p_1}^{p_0}\int_q^p\nabla_h\cdot\partial_p\mathbf{u}
d\xi dq\right|\leq\int_{p_1}^{p_0}|\nabla_h\partial_p\mathbf{u}|dq,
\end{eqnarray}
for any $p\in[p_1,p_0]$. Based upon this fact, it follows from Lemma \ref{lemlad} (see Appendix) that
\begin{eqnarray}
  \left|\int_\mathcal M\nabla_h\cdot\mathbf{u}(\partial_pq_r)^2d\mathcal M\right|
&\leq&\int_{\mathcal M'}\int_{p_1}^{p_0}|\nabla_h\partial_p\mathbf{u}|dp
\int_{p_1}^{p_0}|\partial_pq_r|^2dpd\mathcal M'\nonumber\\
&\leq& C\|\nabla_h\partial_p\mathbf{u}\|\|\partial_pq_r\|(\|\partial_pq_r\|+
\|\nabla_h\partial_pq_r\|)\nonumber\\
&\leq&\frac{\mu_{q_r}}{12}\|\nabla_h\partial_pq_r\|^2+C(1+\|\nabla_h\partial_p \mathbf{u}\|^2)\|\partial_pq_r\|^2.\label{covpq.3}
\end{eqnarray}
Substituting (\ref{covpq.1}), (\ref{covpq.2}), and (\ref{covpq.3}) into (\ref{covpq.0}), one obtains the estimate
\begin{eqnarray}
  &&\int_\mathcal M(\mathbf{u}\cdot\nabla_hq_r+\omega\partial_pq_r)
\partial_p^2q_rd\mathcal M\nonumber\\
&\leq&\frac{\mu_{q_r}}{4}\|\nabla_h\partial_pq_r\|^2+C(
\|\nabla_hq_r\|^2+\|\nabla_h\partial_p\mathbf{u}\|^2+1)
(1+\|\partial_pq_r\|^2).\label{covpq}
\end{eqnarray}

Substituting (\ref{soupq}), (\ref{tpqr}), (\ref{df.npqr}), and (\ref{covpq}) into (\ref{est.pqr}) leads to
\begin{eqnarray}
  \frac{d}{dt}\left(\frac{\|\partial_pq_r\|^2}{2}+\alpha_{0r}\int_{\mathcal M'}\left.\left(\frac{q_r^2}{2}-q_rq_{b0r}\right)d\mathcal M'\right|_{p_0}\right)+\frac12(\mu_{q_r}\|\nabla_h\partial_pq_r\|^2 +\nu_{q_r}\|\partial_p^2q_r\|_w^2)\nonumber\\
\leq C(\|\nabla_h\partial_p\mathbf{u}\|^2+\|\nabla_hq_r\|^2+1)
(1+\|\partial_pq_r\|^2).\nonumber
\end{eqnarray}
from which, noticing that
\begin{eqnarray*}
\left.\int_{\mathcal M'}(q_r^2-2q_rq_{b0r})d\mathcal M'\right|_{p_0}
&=&\left.\int_{\mathcal M'}\left[(q_r-q_{b0r})^2-q_{b0r}^2\right]d\mathcal M'\right|_{p_0}\\
&\geq&-\left.\int_{\mathcal M'}q_{b0r}^2 d\mathcal M'\right|_{p_0}\geq-C,
\end{eqnarray*}
the conclusion follows, by Proposition \ref{H1q}, Proposition \ref{est.u.pu}, and the Gronwall inequality.
\end{proof}

Based on the estimates on the vertical derivatives of the velocity and
moisture components, we can now bound the corresponding horizontal derivatives.

\begin{prop}
\label{prop.est.hu}
Let Assumption \ref{ass} hold, then there exists a function $K_5(t)$, which depends on the initial and boundary data, and is continuous for all $t\geq 0$, such that
\begin{eqnarray*}
\sup_{0\leq t\leq\mathcal T}\|\nabla_h\mathbf{u}\|^2
+\int_0^\mathcal T\left(
\|\Delta_h\mathbf{u}\|^2 + \|\pa_p\nabla_h
\mathbf{u}\|^2\right)dt\leq K_5(\mathcal T),
\end{eqnarray*}
and
\begin{eqnarray*}
\sup_{0\leq t\leq\mathcal T}
\|\nabla_hq_j\|^2+\int_0^\mathcal T\left(\|\Delta_hq_j\|^2
+\|\nabla_h\partial_pq_j\|_w^2\right)dt\leq K_5(\mathcal T),\quad j\in\{v,c,r\},
\end{eqnarray*}
for any $\mathcal T\in[0,\mathcal T_{\text{max}})$.
\end{prop}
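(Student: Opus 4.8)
The plan is to run two $H^1$-type energy estimates in succession: first for the horizontal gradient of $\mathbf{u}$, and then, using the resulting bound, for the horizontal gradients of $q_v,q_c,q_r$. For the velocity I would test the momentum equation \eqref{Au.1} with $-\Delta_h\mathbf{u}$ and integrate over $\mathcal M$. After integration by parts and use of \eqref{bound.0}--\eqref{bound.ll}, the diffusion term yields the coercive contribution $\mu_\mathbf{u}\|\Delta_h\mathbf{u}\|^2+\nu_\mathbf{u}\|\pa_p\nabla_h\mathbf{u}\|_w^2$, modulo boundary integrals on $\Gamma_\ell$ controlled by the trace inequality as in the proof of Proposition \ref{prop.est.pq}; the interior part of the Coriolis term vanishes since $\int_{\mathcal M}\nabla_h\mathbf{u}^\perp:\nabla_h\mathbf{u}\,d\mathcal M=0$ by antisymmetry, its $\Gamma_\ell$ boundary remainder being absorbed by trace estimates. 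For the geopotential I would use \eqref{eq.Phi}: the surface part is treated as in \cite{CT} and drops after integration by parts thanks to the constraint $\nabla_h\cdot\bar{\mathbf{u}}=0$ (equivalently $\int_{p_1}^{p_0}\nabla_h\cdot\mathbf{u}\,dp=0$), while the temperature integral is absorbed as $\tfrac14\mu_\mathbf{u}\|\Delta_h\mathbf{u}\|^2+C\|\nabla_hT\|^2$, invoking Proposition \ref{L2T}.

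The crux is the pair of convective terms $\int_{\mathcal M}(\mathbf{u}\cdot\nabla_h)\mathbf{u}\cdot\Delta_h\mathbf{u}\,d\mathcal M$ and $\int_{\mathcal M}\omega\,\pa_p\mathbf{u}\cdot\Delta_h\mathbf{u}\,d\mathcal M$. Here I would follow the anisotropic scheme of Cao and Titi: distribute one derivative by integration by parts, exploit the bounds $\sup_t\|\mathbf{u}\|_{L^6}\le K_3$ and $\sup_t\|\pa_p\mathbf{u}\|\le K_3$ from Proposition \ref{est.u.pu}, write $\omega(\cdot,p)=\int_p^{p_0}\nabla_h\cdot\mathbf{u}\,dp'$ with $|\nabla_h\cdot\mathbf{u}(\cdot,p)|\le\int_{p_1}^{p_0}|\nabla_h\pa_p\mathbf{u}|\,dp'$, and apply the Ladyzhenskaya-type inequality of Lemma \ref{lemlad} together with Young's inequality so that every top-order factor is absorbed into $\mu_\mathbf{u}\|\Delta_h\mathbf{u}\|^2+\nu_\mathbf{u}\|\pa_p\nabla_h\mathbf{u}\|_w^2$. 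This produces a differential inequality of the form $\frac{d}{dt}\|\nabla_h\mathbf{u}\|^2+\mu_\mathbf{u}\|\Delta_h\mathbf{u}\|^2+\nu_\mathbf{u}\|\pa_p\nabla_h\mathbf{u}\|_w^2\le C(1+\|\pa_p\nabla_h\mathbf{u}\|^2+\|\nabla_hT\|^2)(1+\|\nabla_h\mathbf{u}\|^2)$, whose Gronwall coefficient is integrable in time by Proposition \ref{est.u.pu} and Proposition \ref{L2T}; the Gronwall inequality gives the first estimate.

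For the moisture components I would test \eqref{Aqv}--\eqref{Aqr} with $-\Delta_hq_j$, $j\in\{v,c,r\}$. The diffusion term again supplies $\mu_{q_j}\|\Delta_hq_j\|^2+\nu_{q_j}\|\nabla_h\pa_pq_j\|_w^2$ modulo boundary integrals on $\Gamma_0$ and $\Gamma_\ell$, which are treated exactly as in Proposition \ref{prop.est.pq} using the identities \eqref{est.qbl}--\eqref{est.qblgrad}, Lemma \ref{lem} and the uniform bounds of Proposition \ref{bddq}, the same density argument justifying these formal boundary manipulations. The source terms are controlled directly by \eqref{bddsource} and Young's inequality; for $q_r$ the sedimentation term $V\pa_p\big(\tfrac{p}{R_d\bar T}q_r\big)$ is bounded by $\tfrac14\mu_{q_r}\|\Delta_hq_r\|^2+C(1+\|\pa_pq_r\|^2+\|\nabla_hq_r\|^2)$ after integrating by parts in the horizontal variables and using Proposition \ref{prop.est.pq}. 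The transport terms $\int_{\mathcal M}(\mathbf{u}\cdot\nabla_hq_j+\omega\,\pa_pq_j)\Delta_hq_j\,d\mathcal M$ are handled in the same anisotropic way as for the velocity, now using $\sup_t\|\mathbf{u}\|_{L^6}\le K_3$, $\sup_t\|\pa_pq_j\|\le K_4$ from Proposition \ref{prop.est.pq}, the bound on $\|\nabla_h\mathbf{u}\|$ just obtained, and Lemma \ref{lemlad}. This yields $\frac{d}{dt}\|\nabla_hq_j\|^2+\mu_{q_j}\|\Delta_hq_j\|^2+\nu_{q_j}\|\nabla_h\pa_pq_j\|_w^2\le C(1+\|\Delta_h\mathbf{u}\|^2+\|\nabla_h\pa_pq_j\|^2)(1+\|\nabla_hq_j\|^2)+C$, and Gronwall together with the first part and Proposition \ref{prop.est.pq} closes the estimate. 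The main obstacle throughout is the careful anisotropic bookkeeping of the quadratic transport terms, so that each top-order factor ($\Delta_h\mathbf{u}$, $\pa_p\nabla_h\mathbf{u}$, $\Delta_hq_j$, $\nabla_h\pa_pq_j$) is absorbed by the dissipation while the remaining Gronwall coefficient stays time-integrable; the numerous boundary terms generated by the Robin conditions are routine but require the trace inequality and Lemma \ref{lem} exactly as in Proposition \ref{prop.est.pq}.
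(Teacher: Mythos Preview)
Your proposal is correct and follows the same scheme as the paper --- test the equations with $-\Delta_h$, use the Cao--Titi anisotropic convection estimates and Lemma~\ref{lemlad}, handle the Robin boundary contributions via the completed-square identities \eqref{est.qbl}--\eqref{est.qblgrad}, and close by Gronwall. The one genuine organizational difference is that you run the two estimates \emph{sequentially} (first close the velocity inequality by Gronwall, then feed $\int_0^{\mathcal T}\|\Delta_h\mathbf{u}\|^2\,dt<\infty$ into a separate Gronwall for the moisture), whereas the paper keeps the velocity inequality \eqref{est.u.hu} open, derives the moisture inequality \eqref{est.hq} with a residual $\tfrac{\mu_{\mathbf u}}{12}\|\Delta_h\mathbf u\|^2$ on the right, \emph{sums} the four inequalities so that this residual is absorbed by the velocity dissipation, and applies Gronwall once to the combined quantity. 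Your decoupled route is legitimate precisely because the momentum equation \eqref{Au.1} couples only to $T$ (via $\Phi$) and not to $q_j$, so the velocity estimate closes using only Propositions~\ref{basic}, \ref{L2T}, \ref{est.u.pu}; the paper's coupled route avoids having to choose between placing $\|\Delta_h\mathbf u\|^2$ as a Gronwall coefficient or an additive forcing. One small correction: in your final moisture differential inequality the Gronwall coefficient should also contain $\|\nabla_h\partial_p\mathbf u\|^2$ (from the $\mathbf u\cdot\nabla_hq_j$ contribution, cf.\ \eqref{covhq.0}), which is integrable by Proposition~\ref{est.u.pu}; this does not affect the argument.
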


\begin{proof}
Following the arguments in Section 3.3.3 of \cite{CT}, we have for the estimate of the horizontal gradient,
\begin{eqnarray*}
&&\frac{d}{dt}\|\nabla_h\mathbf{u}\|^2 + \mu_\mathbf{u}
\|\Delta_h\mathbf{u}\|^2 + \nu_\mathbf{u}\|\pa_p\nabla_h
\mathbf{u}\|^2\nonumber\\
&& \quad \leq C (\|\mathbf{u}\|_{L^6}^4 +  \|\nabla_h \mathbf{u}
\|^2 \|\pa_p \mathbf{u}\|^2)\|\nabla_h \mathbf{u}\|^2
+C\int_\mathcal M \left(\int_{p_1}^p|\nabla_h T
|d\sigma\right)|\Delta_h\mathbf{u}|d\mathcal M,
\end{eqnarray*}
from which, by Proposition \ref{est.u.pu}  and Young's inequality, one obtains
\begin{eqnarray}
&&\frac{d}{dt}\|\nabla_h\mathbf{u}\|^2 + \mu_\mathbf{u}
\|\Delta_h\mathbf{u}\|^2 + \nu_\mathbf{u}\|\pa_p\nabla_h
\mathbf{u}\|^2\nonumber\\
&&\quad \leq  C (1 +  \|\nabla_h \mathbf{u}
\|^2 )\|\nabla_h \mathbf{u}\|^2
+C\|\nabla_hT\|\|\Delta_h\mathbf{u}\|\nonumber\\
&&\quad \leq  C (1 +  \|\nabla_h \mathbf{u}
\|^2 )\|\nabla_h \mathbf{u}\|^2
+C\|\nabla_hT\|^2 +\frac{\mu_{\mathbf{u}}}{4}\|\Delta_h\mathbf{u}\|^2,\label{est.u.hu.0}
\end{eqnarray}
leading to
\begin{eqnarray}
\frac{d}{dt}\|\nabla_h\mathbf{u}\|^2 + \frac{3\mu_\mathbf{u}}{4}
\|\Delta_h\mathbf{u}\|^2 + \nu_\mathbf{u}\|\pa_p\nabla_h
\mathbf{u}\|^2\leq  C (1+  \|\nabla_h \mathbf{u}
\|^2 )\|\nabla_h \mathbf{u}\|^2+ C \|\nabla_hT\|^2.\label{est.u.hu}
\end{eqnarray}
Next, we estimate the horizontal gradient of $q_r$.
Multiplying equation (\ref{Aqr}) by $-\Delta_hq_r$ and integrating over
$\mathcal M$ yield
\begin{eqnarray}
&&\int_\mathcal M\left(-\partial_tq_r+
\label{est.hqr.0} D^{q_r}q_r\right)\Delta_hq_rd\mathcal M\\
&&=\int_\mathcal M \left[V\partial_p\left(\frac{pq_r}{R_d\bar T}\right)+
S_{ac}+S_{cr}-S_{ev,\varepsilon}\right]\Delta_hq_rd\mathcal M+\int_\mathcal M(\mathbf{u}\cdot\nabla_hq_r+\omega\partial_pq_r)\Delta_h
q_rd\mathcal M.
\nonumber
\end{eqnarray}
Recalling (\ref{bddsource}), it follows from the Young
inequality, Proposition \ref{bddq} and Proposition \ref{prop.est.pq} that
\begin{eqnarray}
&&\int_\mathcal M \left[V\partial_p\left(\frac{pq_r}{R_d\bar T}\right)
+S_{ac}+S_{cr}-S_{ev,\varepsilon}\right]\Delta_hq_rd\mathcal M\nonumber\\
&&\leq\frac{\mu_{q_r}}{4}\|\Delta_hq_r\|^2+C(\|q_r\|^2+\|\partial_p
q_r\|^2) \leq\frac{\mu_{q_r}}{8}\|\Delta_hq_r\|^2+C.
\label{est.hqr.1}
\end{eqnarray}
Integrating by parts and using the boundary condition (\ref{bound.ll}), we obtain
\begin{eqnarray}
&&-\int_\mathcal M\partial_tq_r\Delta_hq_rd\mathcal M\nonumber\\
&=&
-\int_{\Gamma_\ell}\partial_tq_r\partial_{\mathbf{n}}q_rd\Gamma_\ell +
\int_\mathcal M\partial_t\nabla_hq_r\cdot\nabla_hq_rd\mathcal M\nonumber\\
&=&\frac12\frac{d}{dt}\|\nabla_hq_r\|^2-\alpha_{\ell v}
\int_{\Gamma_\ell}\partial_tq_r(q_{b\ell r}-q_r)d\Gamma_\ell\nonumber \\
&=&\frac{d}{dt}\left(\frac{\|\nabla_hq_r\|^2}{2}+\alpha_{\ell r}\int_{
\Gamma_\ell}\left(\frac{q_r^2}{2}-q_rq_{b\ell r}\right)d\Gamma_\ell\right)
 +\alpha_{\ell r}\int_{\Gamma_\ell}q_r\partial_tq_{b\ell r}d\Gamma_\ell
\nonumber\\
&\geq&\frac{d}{dt}\left(\frac{\|\nabla_hq_r\|^2}{2}+\alpha_{\ell r}
\int_{\Gamma_\ell}\left(\frac{q_r^2}{2}-q_rq_{b\ell r}\right)d\Gamma_\ell
\right)-C(1+\|\nabla_hq_r\|),
\label{thqr}
\end{eqnarray}
where in the last step we have used
\begin{eqnarray*}
\left|\int_{\Gamma_\ell}q_r\partial_tq_{b\ell r}d\Gamma_\ell\right|
\leq C\|q_r\|_{L^1(\Gamma_\ell)}\leq C(\|q_r\|
+\|\nabla_hq_r\|)\leq C(1+\|\nabla_hq_r\|),
\end{eqnarray*}
which is guaranteed by the trace inequality and Proposition \ref{bddq}.

Recalling the expression of $\mathcal D^{q_r}q_r$, we have
\begin{equation}
  \int_\mathcal M\mathcal D^{q_r}q_r\Delta_hq_vd\mathcal M
=\mu_{q_r}\|\Delta_hq_r\|^2+\nu_{q_r}\int_\mathcal M\partial_p\left(
\left(\frac{gp}{R_d\bar T}\right)^2\partial_pq_r\right)\Delta_hq_vd\mathcal M.
\label{df.hqr.0}
\end{equation}
Moreover integration by parts yields
\begin{eqnarray}
&&\int_\mathcal M\partial_p\left(
\left(\frac{gp}{R_d\bar T}\right)^2\partial_pq_r\right)\Delta_hq_vd\mathcal M\nonumber\\
&=&\left.\int_{\mathcal M'}\left(\frac{gp}{R_d\bar T}\right)^2\partial_p q_r
\Delta_hq_rd\mathcal M'\right|_{p_0}
-\int_\mathcal M\left(\frac{gp}{R_d\bar T}\right)^2\partial_pq_r\partial_p\Delta_hq_rd\mathcal M\nonumber\\
&=&\left.\int_{\mathcal M'}\left(\frac{gp}{R_d\bar T}\right)^2\partial_p q_r
\Delta_hq_rd\mathcal M'\right|_{p_0}
+\|\nabla_h\partial_pq_r\|_w^2\nonumber\\
&&-\int_{\Gamma_\ell}\left(\frac{gp}{R_d\bar T}\right)^2\partial_pq_r\partial_p\partial_{\mathbf{n}}q_rd\Gamma_\ell.
\label{df.hqr.1}
\end{eqnarray}
We denote hereafter $\delta_0=\alpha_{0r}\left(\frac{gp_0}{R_d\bar T(p_0)}\right)^2$. Then, using the boundary condition (\ref{bound.0}) and \eqref{est.qbl}--\eqref{est.qblgrad}, it follows from integration by parts that
\begin{eqnarray}
  &&\left.\int_{\mathcal M'}\left(\frac{gp}{R_d\bar T}\right)^2\partial_p q_r
\Delta_hq_rd\mathcal M'\right|_{p_0}=\delta_0
\left.\int_{\mathcal M'}(q_{b0r}-q_r)\Delta_hq_rd\mathcal M'\right|_{p_0}\nonumber\\
&=&\delta_0\left.\left(\int_{\partial\mathcal M'}(q_{b0r}-q_r)\partial_{\mathbf{n}}q_rd\partial\mathcal M'+\int_{\mathcal M'}\nabla_h(q_r-q_{b0r})\cdot \nabla_hq_r d\mathcal M'\right)\right|_{p_0}\nonumber\\
&=&\delta_0
\left.\left(\alpha_{\ell r}\int_{\partial\mathcal M'}(q_{b\ell r}-q_r)(q_{b0r}-q_r)d\partial\mathcal M'+
\int_{\mathcal M'}\nabla_hq_r\cdot\nabla_h(q_r-q_{b0r})d\mathcal M\right)\right|_{p_0}\nonumber\\
&\geq&-\frac{\delta_0}{4}
\left.\left(\alpha_{\ell r}\int_{\partial\mathcal M'}(q_{b\ell r}-q_{b0r})^2d\partial\mathcal M'+\int_{\mathcal M'}|\nabla_hq_{b0r}|^2d\mathcal M'\right)\right|_{p_0}\geq-C,
\label{df.hqr.2}
\end{eqnarray}
where we note that here and in the subsequent estimates the constant $C$ depends on the initial and boundary data.
Using the boundary condition (\ref{bound.ll}), we have moreover
\begin{eqnarray}
&&-\int_{\Gamma_\ell}\left(\frac{gp}{R_d\bar T}\right)^2\partial_pq_r\partial_p\partial_{\mathbf{n}}q_rd\Gamma_\ell
\nonumber\\
&=&\alpha_{\ell r}\int_{\Gamma_\ell}\left(\frac{gp}{R_d\bar T}\right)^2\partial_pq_r (\partial_pq_r-\partial_pq_{b\ell r})d\Gamma_\ell\nonumber\\
&=&\alpha_{\ell r}\int_{\Gamma_\ell}\left(\frac{gp}{R_d\bar T}\right)^2\left[\left(\partial_pq_r-\frac{\partial_pq_{b\ell r}}{2}\right)^2-\frac{|\partial_pq_{b\ell r}|^2}{4}\right]d\Gamma_\ell
\nonumber\\
&\geq&-\frac{\alpha_{\ell r}}{4}\int_{\Gamma_\ell} \left(\frac{gp}{R_d\bar T}\right)^2(\partial_pq_{b\ell r})^2d\Gamma_\ell\geq-C.\label{df.hqr.3}
\end{eqnarray}
Substituting (\ref{df.hqr.2}) and (\ref{df.hqr.3}) into (\ref{df.hqr.1}) yields
\begin{equation*}
  \int_\mathcal M\partial_p\left(
\left(\frac{gp}{R_d\bar T}\right)^2\partial_pq_r\right)\Delta_hq_vd\mathcal M\geq
\|\nabla_h\partial_pq_r\|_w^2-C,
\end{equation*}
and, consequently, it follows from (\ref{df.hqr.0}) that
\begin{equation}
  \label{df.hqr}
\int_\mathcal M\mathcal D^{q_r}q_r\Delta_hq_vd\mathcal M
\geq\mu_{q_r}\|\Delta_hq_r\|^2+\nu_{q_r}\|\nabla_h\partial_pq_r\|_w^2
-C.
\end{equation}
Note that
\begin{eqnarray*}
|u(x,y,p)|&=&\left|\frac{1}{p_0-p_1}\int_{p_1}^{p_0}ud\sigma+ \frac{1}{p_0-p_1}\int_{p_1}^{p_0}\left(\int_\sigma^p\partial_pud\xi\right) d\sigma\right|\\
&\leq&\frac{1}{p_0-p_1}\int_{p_1}^{p_0}|u|d\sigma+\int_{p_1}^{p_0}|\partial_p u|d\sigma,
\end{eqnarray*}
for any $p\in[p_1,p_0]$. Thanks to this, it follows from Lemma \ref{lemlad}
and \corr{Young's inequality} that
\begin{eqnarray}
  &&\int_\mathcal M(\mathbf{u}\cdot\nabla_hq_r+\omega\partial_pq_r)
\Delta_hq_rd\mathcal M\nonumber\\
&\leq&C\int_{\mathcal M'}\int_{p_1}^{p_0}(|u|+|\partial_pu|)dp\int_{p_1}^{p_0}|\nabla_hq_r|
|\Delta_hq_r|dpd\mathcal M'\nonumber\\
&&+\int_{\mathcal M'}\int_{p_1}^{p_0}|\nabla_h\mathbf{u}|dp\int_{p_1}^{p_0}|\partial_pq_r| |\Delta_hq_r|dpd\mathcal M'\nonumber\\
&\leq&C\left[\|\mathbf{u}\|^{\frac12}\left(\|\mathbf{u}\|^{\frac12}
+\|\nabla_h\mathbf{u}\|^{\frac12}\right)+\|\partial_p\mathbf{u}\|^{\frac12}
\left(\|\partial_p\mathbf{u}\|^{\frac12}+\|\nabla_h\partial_p\mathbf{u}\|
^{\frac12}\right)\right]\|\nabla_hq_r\|^{\frac12}\nonumber\\
&&\times\left(\|\nabla_hq_r\|^{\frac12} +\|\nabla_h^2q_r\|^{\frac12}\right)\|\Delta_hq_r\| +C\|\nabla_h\mathbf{u}\|^{\frac12}\left(\|\nabla_h^2\mathbf{u}
\|^{\frac12}+\|\nabla_h\mathbf{u}\|^{\frac12}\right)\nonumber\\
&&\times\|\partial_pq_r
\|^{\frac12}\left(\|\partial_pq_r\|^{\frac12}+\|\nabla_h\partial_pq_r\|
^{\frac12}\right)\|\Delta_hq_r\|\nonumber\\
&\leq&C\left[\left(1
+\|\nabla_h\mathbf{u}\|^{\frac12}\right)+
\left(1+\|\nabla_h\partial_p\mathbf{u}\|
^{\frac12}\right)\right]\|\nabla_hq_r\|^{\frac12} \left(\|\nabla_hq_r\|^{\frac12}
+\|\nabla_h^2q_r\|^{\frac12}\right)\nonumber\\
&&\times
\|\Delta_hq_r\|+C\|\nabla_h\mathbf{u}\|^{\frac12}\left(\|\nabla_h^2\mathbf{u}
\|^{\frac12}+\|\nabla_h\mathbf{u}\|^{\frac12}\right) \left(1+\|\nabla_h\partial_pq_r\|
^{\frac12}\right)\|\Delta_hq_r\|,\label{covhq.0}
\end{eqnarray}
where, we have used Proposition \ref{bddq}, Proposition \ref{est.u.pu}, and Proposition \ref{prop.est.pq}.
By \corr{standard} elliptic estimates and Proposition \ref{bddq}, we have
\begin{equation}\label{elliptic}
\|\nabla^2_h\mathbf{u}\|\leq C\|\Delta_h\mathbf{u}\|,\quad \|\nabla_h^2 q_i\|\leq C(\|\Delta_hq_i\|+\|q_i\|+1)\leq C(\|\Delta_hq_i\|+1),
\end{equation}
for $i\in\{v,c,r\}$.
Then it follows from (\ref{covhq.0}) and \corr{Young's inequality} that
\begin{eqnarray}
  &&\int_\mathcal M(\mathbf{u}\cdot\nabla_hq_r+\omega\partial_pq_r)
\Delta_hq_rd\mathcal M\nonumber\\
&\leq&C\left[\left(1
+\|\nabla_h\mathbf{u}\|^{\frac12}\right)+
\left(1+\|\nabla_h\partial_p\mathbf{u}\|
^{\frac12}\right)\right]\|\nabla_hq_r\|^{\frac12} \left(1+\|\nabla_hq_r\|^{\frac12}
+\|\Delta_hq_r\|^{\frac12}\right)\nonumber\\
&&
\times\|\Delta_hq_r\|+C\|\nabla_h\mathbf{u}\|^{\frac12}\left(1+\|\Delta_h\mathbf{u}
\|^{\frac12}+\|\nabla_h\mathbf{u}\|^{\frac12}\right) \left(1+\|\nabla_h\partial_pq_r\|
^{\frac12}\right)\|\Delta_hq_r\|\nonumber\\
&\leq&\frac{\mu_{q_r}}{8}\|\Delta_hq_r\|^2+\frac{\mu_{\mathbf{u}}}{12}
\|\Delta_h\mathbf{u}\|^2+C(1+\|\nabla_h\mathbf{u}\|^2+\|\nabla_h\partial_p
\mathbf{u}\|^2)(1+\|\nabla_hq_r\|^2)\nonumber\\
&&+C(1+\|\nabla_h\partial_pq_r\|^2)(1+\|\nabla_h\mathbf{u}\|^2).
\label{covhq}
\end{eqnarray}

Thanks to (\ref{est.hqr.1}), (\ref{thqr}), (\ref{df.hqr}), and (\ref{covhq}), it follows from (\ref{est.hqr.0}) that
\begin{align}
  \frac{d}{dt}\left(\frac{\|\nabla_hq_r\|^2}{2}\right.
+\alpha_{\ell r}&\left.\int_{\Gamma_\ell}
\left(\frac{q_r^2}{2}-q_rq_{b\ell r}\right)d
\Gamma_\ell\right)+\frac{3\mu_{q_r}}{4}\|\Delta_hq_r\|^2
+\nu_{q_r}\|\nabla_h\partial_pq_r\|_w^2\nonumber\\
\leq& \ \ \frac{\mu_{\mathbf{u}}}{12}
\|\Delta_h\mathbf{u}\|^2+C(1+\|\nabla_h\mathbf{u}\|^2
+\|\nabla_h\partial_p
\mathbf{u}\|^2)(1+\|\nabla_hq_r\|^2)\nonumber\\
&\ \ +C(1+\|\nabla_h\partial_pq_r\|^2)(1
+\|\nabla_h\mathbf{u}\|^2). \nonumber
\end{align}
The same estimate as above also holds for $q_v, q_c$, such that
\begin{align}
  \frac{d}{dt}\left(\frac{\|\nabla_hq_j\|^2}{2}\right.+\alpha_{\ell j}
&\left.\int_{\Gamma_\ell}\left(\frac{q_j^2}{2}-q_jq_{b\ell j}\right)d
\Gamma_\ell\right)+\frac{3\mu_{q_j}}{4}\|\Delta_hq_j\|^2
+\nu_{q_j}\|\nabla_h\partial_pq_j\|_w^2\nonumber\\
\leq& \ \ \frac{\mu_{\mathbf{u}}}{12}
\|\Delta_h\mathbf{u}\|^2+C(1+\|\nabla_h\mathbf{u}\|^2+\|\nabla_h
\partial_p
\mathbf{u}\|^2)(1+\|\nabla_hq_j\|^2)\nonumber\\
&\ \ +C(1+\|\nabla_h\partial_pq_j\|^2)(1+\|\nabla_h\mathbf{u}\|^2),
\label{est.hq}
\end{align}
for $j\in\{v,c,r\}$.

Summing (\ref{est.u.hu}) with (\ref{est.hq}) for $j\in\{v,c,r\}$ yields
\begin{eqnarray}
&&\frac{d}{dt}
\left[\|\nabla_h\mathbf{u}\|^2+\sum_{j\in\{v,c,r\}}
\left(\frac{\|\nabla_hq_j\|^2}{2}
+\alpha_{\ell j}
\int_{\Gamma_\ell}\left(\frac{q_j^2}{2}-q_jq_{b\ell j}
\right)d
\Gamma_\ell\right)\right]\nonumber\\
&&+\sum_{j\in\{v,c,r\}}\left(\frac{\mu_{q_j}}{2}\|\Delta_hq_j\|^2
+\nu_{q_j}\|\nabla_h\partial_pq_j\|_w^2\right)+\frac{\mu_\mathbf{u}}{2}
\|\Delta_h\mathbf{u}\|^2 + \nu_\mathbf{u}\|\pa_p\nabla_h
\mathbf{u}\|^2
\nonumber\\
&\leq&C\left(1+\|\nabla_h\mathbf{u}\|^2+\|\nabla_hT\|^2+\|\nabla_h
\partial_p
\mathbf{u}\|^2+\sum_{j\in\{v,c,r\}}\|\nabla_h\partial_pq_j\|^2\right)
\nonumber\\
&&\times\left(1+\|\nabla_h\mathbf{u}\|^2+\sum_{j\in\{v,c,r\}}
\|\nabla_hq_j\|^2\right),\nonumber
\end{eqnarray}
from which, noticing that
$$
\int_{\Gamma_\ell}\left(\frac{q_j^2}{2}-q_jq_{b\ell j}
\right)d
\Gamma_\ell=\frac12\int_{\Gamma_\ell}\left((q_j-q_{b\ell j})^2-q_{b\ell j}^2
\right)d
\Gamma_\ell\geq
-\frac12\int_{\Gamma_\ell}q_{b\ell j}^2
d\Gamma_\ell\geq-C,
$$
the conclusion follows by the Gronwall inequality and
Propositions \ref{basic}--\ref{prop.est.pq}.
\end{proof}

Finally, we have the control of the gradient of the temperature.

\begin{prop}
  \label{prop.est.dt}
Let Assumption \ref{ass} hold, then there exists a function $K_6(t)$, which depends on the initial and boundary data, and is continuous for all $t\geq 0$, such that
\begin{equation*}
  \sup_{0\leq t\leq\mathcal T}\|\nabla_h T\|^2+\int_0^\mathcal T(\|\nabla_h^2T\|^2+\|\nabla_h \pa_p T\|_w^2)dt\leq K_6(\mathcal T),
\end{equation*}
and
\begin{equation*}
  \sup_{0\leq t\leq\mathcal T}\|\pa_p T\|^2+\int_0^\mathcal T(\|\nabla_h\pa_pT\|^2+\| \pa^2_p T\|_w^2)dt\leq K_6(\mathcal T),
\end{equation*}
for any $\mathcal T\in[0,\mathcal T_{\text{max}})$.
\end{prop}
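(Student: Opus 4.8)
The plan is to run two coupled energy estimates in parallel, one for $\nabla_h T$ obtained by testing \eqref{AT} with $-\Delta_h T$, and one for $\partial_p T$ obtained by testing \eqref{AT} with $-\partial_p^2 T$, exactly in the spirit of the corresponding estimates for the moisture components in Propositions \ref{prop.est.pq} and \ref{prop.est.hu}. First I would treat the dissipation term $-\int_\M T\,\mathcal D^T T\,d\M$ (resp.\ its $\partial_p^2$ analogue): integrating by parts and using the Robin boundary conditions \eqref{bound.0}--\eqref{bound.ll} for $T$, one gets the good terms $\mu_T\|\nabla_h^2 T\|^2+\nu_T\|\nabla_h\partial_pT\|_w^2$ (resp.\ $\mu_T\|\nabla_h\partial_pT\|^2+\nu_T\|\partial_p^2T\|_w^2$), together with boundary integrals on $\Gamma_0$ and $\Gamma_\ell$ that are handled exactly as in \eqref{df.npqr1}--\eqref{df.npqr2} and \eqref{df.hqr.2}--\eqref{df.hqr.3} by completing the square and using the trace inequality, the boundedness of $T$ from Proposition \ref{bddq}, and the smoothness of the boundary data. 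The time-derivative term likewise produces, after integration by parts against the boundary condition, a term $\frac{d}{dt}$ of $\frac12\|\nabla_hT\|^2$ (resp.\ $\frac12\|\partial_pT\|^2$) plus a modification supported on $\Gamma_\ell$ (resp.\ $\Gamma_0$) involving $T_{b\ell}$ (resp.\ $T_{b0}$), controlled as in \eqref{thqr} and \eqref{tpqr} using $\partial_t T_{b\ell}\in L^\infty$, Lemma \ref{lem}, and Proposition \ref{bddq}.

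The transport terms $(\mathbf{u}\cdot\nabla_h T+\omega\partial_p T)$ tested against $-\Delta_h T$ or $-\partial_p^2 T$ are estimated by the same anisotropic Ladyzhenskaya-type argument used in \eqref{covhq.0}--\eqref{covhq} and \eqref{covpq.1}--\eqref{covpq.3}: writing $\omega=\int_p^{p_0}\nabla_h\cdot\mathbf{u}$, bounding pointwise $|\mathbf{u}|$ and $|\omega|$ by vertical integrals of $|\mathbf{u}|,|\partial_p\mathbf{u}|$ and $|\nabla_h\mathbf{u}|$, applying Lemma \ref{lemlad}, and absorbing the top-order factors $\|\nabla_h^2T\|$, $\|\partial_p^2T\|_w$ (via the elliptic estimate \eqref{elliptic}) into the dissipation at the cost of lower-order factors multiplied by $\|\nabla_h\mathbf{u}\|,\|\partial_p\mathbf{u}\|,\|\nabla_h\partial_p\mathbf{u}\|$, all of which are already controlled by Propositions \ref{basic}, \ref{est.u.pu}, \ref{prop.est.hu}. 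The source term $\frac{L}{c_p}(S_{cd}-S_{ev,\varepsilon})$ is the easiest: by \eqref{bddsource} it is bounded, and since $S_{cd}$ depends on $q_v,q_{vs}(p,T),q_c$ through Lipschitz nonlinearities, $\nabla_h S_{cd}$ and $\partial_p S_{cd}$ are controlled in $L^2$ by $\nabla q_v,\nabla q_c,\nabla_h T,\partial_p T$ (the derivative of $q_{vs}$ in $T$ contributing exactly a $\|\nabla_h T\|$ or $\|\partial_p T\|$ factor), while $\nabla S_{ev,\varepsilon}$ brings a factor $q_r(q_r+\varepsilon)^{\beta-1}\le q_r^\beta$ which is bounded uniformly in $\varepsilon$ by Proposition \ref{bddq}; hence these contribute $C(1+\|\nabla_hT\|^2+\|\partial_pT\|^2+\|\nabla(q_v,q_c)\|^2)$ plus an absorbable multiple of the dissipation.

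The term that requires genuine care is the antidissipative contribution $\frac{R}{c_p}\frac{T}{p}\omega$. Tested against $-\Delta_h T$ it gives $\frac{R}{c_p}\int_\M \frac{1}{p}\nabla_h(T\omega)\cdot\nabla_h T\,d\M$, which splits into a term with $\omega|\nabla_h T|^2$ and a term with $T\nabla_h\omega\cdot\nabla_h T$; since $\nabla_h\omega=\int_p^{p_0}\nabla_h(\nabla_h\cdot\mathbf{u})$ involves $\nabla_h^2\mathbf{u}$, I would control this by Lemma \ref{lemlad} together with the elliptic bound $\|\nabla_h^2\mathbf{u}\|\le C\|\Delta_h\mathbf{u}\|$, absorbing a small multiple of $\|\Delta_h\mathbf{u}\|^2$ into the bound from Proposition \ref{prop.est.hu} (which has spare $\Delta_h\mathbf{u}$ dissipation — this is why the present proposition must come after Proposition \ref{prop.est.hu}), and absorbing $\|\nabla_h^2 T\|$ into the $T$-dissipation, leaving lower-order factors times already-controlled quantities. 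Tested against $-\partial_p^2 T$ it produces $\frac{R}{c_p}\int_\M \partial_p(\frac{T\omega}{p})\partial_p T\,d\M$, where $\partial_p\omega=-\nabla_h\cdot\mathbf{u}$ so no new top-order term appears and the estimate is comparatively mild. After these reductions both differential inequalities take the form $\frac{d}{dt}(\text{energy})+\tfrac12(\text{dissipation})\le \Theta(t)\,(1+\text{energy})$ with $\Theta\in L^1(0,\mathcal T)$ by the previous propositions, plus a bounded multiple of $\|\Delta_h\mathbf{u}\|^2$ on the right which integrates finitely; summing the two inequalities and invoking Gronwall, together with Propositions \ref{basic}--\ref{prop.est.hu}, yields both asserted bounds. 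The main obstacle, as indicated, is ensuring that the two highest-order couplings — $\nabla_h T$ against $\nabla_h\omega\sim\nabla_h^2\mathbf{u}$, and $\nabla_h\mathbf{u}$ against $\nabla_h T$ already appearing in \eqref{est.u.hu} — are simultaneously closable; this is exactly why the temperature-gradient estimate is placed last, after the $L^\infty L^6$ bound on $\mathbf u$ and the horizontal-gradient bounds have made the requisite $\Delta_h\mathbf u$ and $\nabla_h\partial_p\mathbf u$ dissipation available.
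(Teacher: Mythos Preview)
Your overall plan matches the paper's proof: test \eqref{AT} with $-\partial_p^2 T$ and with $-\Delta_h T$, handle the time-derivative, diffusion, and convection contributions exactly as in Propositions \ref{prop.est.pq} and \ref{prop.est.hu}, and close by Gronwall using Propositions \ref{basic}--\ref{prop.est.hu}. Two points, however, deserve correction or simplification.

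First, your treatment of the source term contains a genuine gap. You propose to integrate by parts and control $\nabla S_{ev,\varepsilon}$, asserting that ``$\nabla S_{ev,\varepsilon}$ brings a factor $q_r(q_r+\varepsilon)^{\beta-1}\le q_r^\beta$ which is bounded uniformly in $\varepsilon$''. But by the chain rule one of the terms in $\nabla S_{ev,\varepsilon}$ carries the factor $\partial_{q_r}\big[q_r(q_r+\varepsilon)^{\beta-1}\big]=(q_r+\varepsilon)^{\beta-2}(\beta q_r+\varepsilon)$, which near $q_r=0$ is of order $\varepsilon^{\beta-1}$ and hence blows up as $\varepsilon\to 0$ when $\beta<1$; your bound would therefore not be uniform in $\varepsilon$, which is precisely what is needed for Section \ref{sec.pf}. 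The paper sidesteps this entirely: since $|S_{cd}-S_{ev,\varepsilon}|\le C$ by \eqref{bddsource}, it estimates directly
\[
\int_\M\frac{L}{c_p}(S_{cd}-S_{ev,\varepsilon})\Delta_h T\,d\M \le \frac{\mu_T}{6}\|\Delta_h T\|^2+C,
\]
and likewise with $\partial_p^2 T$, with no integration by parts and no differentiation of the source.

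Second, for the antidissipative term $\frac{R}{c_p}\frac{T}{p}\omega$ the paper takes a shorter route than you suggest. Rather than integrating by parts to produce $\nabla_h(T\omega)$ (which forces you to control $\nabla_h\omega$ and a boundary integral on $\Gamma_\ell$), it bounds directly
\[
\Big|\int_\M\frac{RT}{c_p p}\,\omega\,\partial_p^2 T\,d\M\Big|
\le C\int_{\M'}\int_{p_1}^{p_0}|\nabla_h\mathbf u|\,dp\int_{p_1}^{p_0}|T||\partial_p^2 T|\,dp\,d\M'
\]
and applies Lemma \ref{lemlad}, obtaining $\tfrac{\mu_T}{12}\|\partial_p^2 T\|^2+C(\|\Delta_h\mathbf u\|^2+\|\nabla_h T\|^2+1)$; the $\Delta_h T$ test is handled identically. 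With this simplification the two differential inequalities actually decouple: the paper first closes the $\partial_p T$ estimate via Gronwall (using $\|\nabla_h T\|\in L^2_t$ from Proposition \ref{L2T} and $\|\Delta_h\mathbf u\|,\|\nabla_h\partial_p\mathbf u\|\in L^2_t$ from Proposition \ref{prop.est.hu}), and only then closes the $\nabla_h T$ estimate (now using $\|\nabla_h\partial_p T\|\in L^2_t$ from the vertical bound just obtained). Summing the two inequalities as you propose also works, but is unnecessary.
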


\begin{proof}
We first estimate the vertical derivative $\partial_pT$. Multiplying the thermodynamic equation \corr{\eqref{AT}} by $-\partial_p^2T$ and integrating the resultant over $\mathcal M$ yields
\begin{eqnarray}
&&\int_\mathcal M(-\partial_tT+\mathcal D^TT)\partial_p^2Td\mathcal M
\nonumber\\
&=&\int_\mathcal M\left[\mathbf{u}\cdot\nabla_hT+\omega\partial_pT
  -\frac{R}{c_p}\frac{T}{p}\omega+\frac{L}{c_p}(S_{cd}-S_{ev,\varepsilon})\right]
  \partial_p^2Td\mathcal M.\label{est.pt.0}
\end{eqnarray}
Following the derivation of (\ref{tpqr}), (\ref{df.npqr}), and (\ref{covpq}),
we obtain
\begin{equation}
-\int_\mathcal M\partial_tT\partial_p^2Td\mathcal M
\geq\frac{d}{dt}\left(\frac{\|\partial_pT\|^2}{2}+\alpha_{0T}
\left.\int_{\mathcal M'}\left(\frac{T^2}{2}-TT_{b0}\right) d\mathcal M'\right|_{p_0}\right)-C(\|\partial_pT\|+1), \label{est.pt.1}
\end{equation}
\begin{equation}
\int_\mathcal M\mathcal D^{T}T\partial_p^2Td\mathcal M
\geq \frac34\left(\mu_{T}\|\nabla_h\partial_pT\|^2+\nu_{T}
\|\partial_p^2T\|_w^2\right)-C(\|\partial_pT\|^2+1),\label{est.pt.2}
\end{equation}
and
\begin{eqnarray}
  &&\int_\mathcal M(\mathbf{u}\cdot\nabla_hT+\omega\partial_pT)
\partial_p^2Td\mathcal M\nonumber\\
&\leq&\frac{\mu_{T}}{12}\|\nabla_h\partial_pT\|^2+C(\|\nabla_hT\|^2+\|\nabla_h\partial_p\mathbf{u}\|^2+1)
(1+\|\partial_pT\|^2).\label{est.pt.3}
\end{eqnarray}
By Lemma \ref{lemlad}, Proposition \ref{basic}, Proposition \ref{L2T}, Proposition \ref{prop.est.hu}, \corr{Young's inequality}, and recalling (\ref{elliptic}), we deduce
\begin{eqnarray}
-\int_\mathcal M\frac{R T}{c_pp}\omega\partial_p^2Td\mathcal M
&\leq&C\int_{\mathcal M'}\int_{p_1}^{p_0}|\nabla_h\mathbf{u}|dp
\int_{p_1}^{p_0}|T||\partial_p^2T|dpd\mathcal M\nonumber\\
&\leq&C\|\nabla_h\mathbf{u}\|^{\frac12}\|\nabla_h^2\mathbf{u}\|^{\frac12}
\|T\|^{\frac12}\left(\|T\|^{\frac12}+\|\nabla_hT\|^{\frac12}\right)
\|\partial_p^2T\|\nonumber\\
&\leq&\frac{\mu_T}{12}\|\partial_p^2T\|^2+C(\|\Delta_h\mathbf{u}\|^2 +\|\nabla_hT\|^2+1).\label{est.pt.4}
\end{eqnarray}
Recalling  (\ref{bddsource}), it follows from \corr{Young's inequality} and Proposition \ref{L2T} that
\begin{equation}
  \int_\mathcal M\frac{L}{c_p}(S_{cd}-S_{ev,\varepsilon})\partial_p^2Td\mathcal M
  \leq C\int_\mathcal M|\partial_p^2T|d\mathcal M
  \leq\frac{\mu_T}{12}\|\partial_p^2T\|^2+C.\label{est.pt.5}
\end{equation}
Substituting (\ref{est.pt.1})--(\ref{est.pt.5}) into (\ref{est.pt.0}), and recalling Proposition \ref{basic} and Proposition \ref{L2T}, one obtains
\begin{eqnarray*}
\frac{d}{dt}\left(\frac{\|\partial_pT\|^2}{2}+\alpha_{0T}
\left.\int_{\mathcal M'}\left(\frac{T^2}{2}-TT_{b0}\right) d\mathcal M'\right|_{p_0}\right)+\left(\frac{\mu_{T}}{2}\|\nabla_h\partial_pT\|^2+
\frac{\nu_{T}}{2}
\|\partial_p^2T\|_w^2\right)\nonumber\\
\leq C(\|\nabla_hT\|^2+\|\Delta_h\mathbf{u}\|^2+\|\nabla_h\partial_p\mathbf{u}\|^2+1)
(1+\|\partial_pT\|^2),
\end{eqnarray*}
from which, noticing that
\begin{eqnarray*}
\left.\int_{\mathcal M'}(T^2-2TT_{b0})d\mathcal M'\right|_{p_0}
=\left.\int_{\mathcal M'}\left[(T-T_{b0})^2-T_{b0}^2\right]d\mathcal M'\right|_{p_0}
\geq-\left.\int_{\mathcal M'}T_{b0}^2 d\mathcal M'\right|_{p_0}\geq-C,
\end{eqnarray*}
by the Gronwall inequality,
Proposition \ref{L2T}, Proposition \ref{est.u.pu}, and Proposition \ref{prop.est.hu}, we obtain
\begin{equation}
  \sup_{0\leq t\leq \mathcal T}\|\partial_pT\|^2+\frac12\int_0^\mathcal T(\mu_T\|\nabla_h\partial_pT\|^2+\nu_T\|\partial_p^2T\|_w^2)dt\leq K_6'(\mathcal T). \label{est.pt}
\end{equation}

Next, we estimate the horizontal gradient $\nabla_hT$. Multiplying equation (\ref{AT}) by $-\Delta_hT$ and integrating the resultant over $\mathcal M$ yield
\begin{eqnarray}
  &&\int_\mathcal M(-\partial_tT+\mathcal D^TT)\Delta_hTd\mathcal M
\nonumber\\
&=&\int_\mathcal M\left[\mathbf{u}\cdot\nabla_HT+\omega\partial_pT
  -\frac{ R}{c_p}\frac{T}{p}\omega+\frac{L}{c_p}(S_{cd}-S_{ev,\varepsilon})\right]
  \Delta_hTd\mathcal M.\label{est.ht.0}
\end{eqnarray}
Following the derivations in (\ref{thqr}) and (\ref{df.hqr}), we obtain
\begin{eqnarray}
-\int_\mathcal M\partial_tT\Delta_hTd\mathcal M&\geq&\frac{d}{dt}\left(\frac{\|\nabla_hT\|^2}{2}+\alpha_{\ell T}
\int_{\Gamma_\ell}\left(\frac{T^2}{2}-TT_{b\ell}\right)d\Gamma_\ell
\right)\nonumber\\
&&-C(1+\|\nabla_hT\|),\label{est.ht.1}
\end{eqnarray}
and
\begin{equation}
\int_\mathcal M\mathcal D^TT\Delta_hTd\mathcal M
\geq\mu_T\|\Delta_hT\|^2+\nu_T\|\nabla_h\partial_pT\|_w^2
-C.\label{est.ht.2}
\end{equation}
Similar to (\ref{covhq}), we get
\begin{eqnarray}
&&\int_\mathcal M(\mathbf{u}\cdot\nabla_hT+\omega\partial_pT)
\Delta_hTd\mathcal M\nonumber\\
&\leq&\frac{\mu_T}{6}\|\Delta_hT\|^2+\frac{\mu_{\mathbf{u}}}{12}
\|\Delta_h\mathbf{u}\|^2+C(1+\|\nabla_h\mathbf{u}\|^2+\|\nabla_h\partial_p
\mathbf{u}\|^2)(1+\|\nabla_hT\|^2)\nonumber\\
&&+C(1+\|\nabla_h\partial_pT\|^2)(1+\|\nabla_h\mathbf{u}\|^2),\nonumber
\end{eqnarray}
from which, by Proposition \ref{prop.est.hu}, one obtains
\begin{eqnarray}
\int_\mathcal M(\mathbf{u}\cdot\nabla_hT+\omega\partial_pT)
\Delta_hTd\mathcal M
&\leq&C(1+\|\nabla_h\partial_pT\|^2+
\|\nabla_h\partial_p
\mathbf{u}\|^2)(1+\|\nabla_hT\|^2) \nonumber\\
&&+\frac{\mu_T}{6}\|\Delta_hT\|^2+\frac{\mu_{\mathbf{u}}}{12}
\|\Delta_h\mathbf{u}\|^2.
\label{est.ht.3}
\end{eqnarray}
Following the derivations in (\ref{est.pt.4}) and (\ref{est.pt.5}), we get
\begin{equation}
-\int_\mathcal M\frac{R T}{c_pp}\omega\Delta_hTd\mathcal M
\leq\frac{\mu_T}{6}\|\Delta_hT\|^2+C(\|\Delta_h\mathbf{u}\|^2 +\|\nabla_hT\|^2+1).\label{est.ht.4}
\end{equation}
and
\begin{equation}
  \int_\mathcal M\frac{L}{c_p}(S_{cd}-S_{ev,\varepsilon})\Delta_hTd\mathcal M
  \leq  \frac{\mu_T}{6}\|\Delta_hT\|^2+C.\label{est.ht.5}
\end{equation}
Substituting (\ref{est.ht.1})--(\ref{est.ht.5}) into (\ref{est.ht.0}) gives
\begin{eqnarray*}
  \frac{d}{dt}\left(\frac{\|\nabla_hT\|^2}{2}+\alpha_{\ell T}
\int_{\Gamma_\ell}\left(\frac{T^2}{2}-TT_{b\ell}\right)d\Gamma_\ell
\right)+\frac12(\mu_T\|\Delta_hT\|^2+\nu_T\|\nabla_h\partial_pT\|_w^2)\\
\leq C(1+\|\nabla_h\partial_pT\|^2+\|\Delta_h\mathbf{u}\|^2+
\|\nabla_h\partial_p
\mathbf{u}\|^2)(1+\|\nabla_hT\|^2),
\end{eqnarray*}
from which, noticing that
\begin{eqnarray*}
\int_{\Gamma_\ell}\left(\frac{T^2}{2}-TT_{b\ell}
\right)d
\Gamma_\ell&=&\frac12\int_{\Gamma_\ell}\left((T-T_{b\ell})^2-T_{b\ell}^2
\right)d
\Gamma_\ell\geq
-\frac12\int_{\Gamma_\ell}T_{b\ell}^2
d\Gamma_\ell\geq-C,
\end{eqnarray*}
it follows from the Gronwall inequality, Proposition \ref{est.u.pu}, Proposition \ref{prop.est.hu}, (\ref{est.pt}), and (\ref{elliptic}) that
\begin{equation*}
  \sup_{0\leq t\leq\mathcal T}\|\nabla_hT\|^2+\int_0^\mathcal T
  (\|\nabla_h^2T\|^2+\|\nabla_h\partial_pT\|_w^2)dt\leq K_6''(\mathcal T),
\end{equation*}
where $K_6''(t)$ depends on the initial and boundary data, and is continuous for all $t\geq 0$. Combining this with (\ref{est.pt}) yields the conclusion.
\end{proof}

As a corollary of Propositions \ref{Aloc}--\ref{prop.est.dt}, we have the following global existence
and a priori estimates for system (\ref{Au.1})--(\ref{Aqr}), subject to (\ref{bound.0})--(\ref{IC}).

\begin{cor}
  \label{COR}
Assume that $\mathbf{u}_0, T_0, q_{v0}, q_{c0}, q_{r0}\in H^1
(\mathcal M)$ and $T_0,q_{v0}, q_{c0}, q_{r0} \in L^\infty(\mathcal M)$, with $T_0$, $q_{v0}$, $q_{c0}$,
$q_{r0}\geq0$ on $\mathcal M$ and
$\int_{p_0}^{p_1}\nabla_h\cdot\mathbf{u}_0dp=0$ on $\mathcal M'$. Then,
system (\ref{Au.1})--(\ref{Aqr}), subject to (\ref{bound.0})--(\ref{IC}) has a unique global strong solution $(\mathbf{u}, T, q_v, q_c, q_r)$, satisfying
\begin{eqnarray*}
&T, q_v, q_c, q_r\geq0, \quad \textnormal{and}\quad T,q_v,q_c,q_r \in L^\infty(0,\mathcal T; L^\infty),\\
&\mathbf{u}, T, q_v, q_c, q_r  \in C([0,\mathcal T]; H^1(\mathcal M))\cap L^2(0,\mathcal T; H^2(\mathcal M)),
\\
&\partial_t\mathbf{u},\partial_tT, \partial_tq_v, \partial_tq_c, \partial_tq_r \in L^2(0,\mathcal
T; L^2(\mathcal M)),
\end{eqnarray*}
and the a priori estimate
\begin{equation*}
  \sup_{0\leq t\leq\mathcal T}(\|(T,q_v, q_c, q_r)\|_{L^\infty(\mathcal M)}+\|(\mathbf{u}, T, q_v, q_c, q_r)\|_{H^1(\mathcal M)})
  \leq
  K(\mathcal T),
\end{equation*}
and
\begin{equation*}
\int_0^\mathcal T(\|(\mathbf{u}, q_v, q_c, q_r)\|_{H^2(\mathcal
  M)}^2+\|(\partial_t\mathbf{u},\partial_tT,\partial_tq_v,
  \partial_tq_c,\partial_tq_r)\|^2)dt\leq
  K(\mathcal T),
\end{equation*}
for any $\mathcal T\in(0,\infty)$, where $K$ is a continuous function on $[0,\infty)$, depending
only on the initial and boundary data,  which is
independent of $\varepsilon\in(0,1)$.
\end{cor}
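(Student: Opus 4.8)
The plan is to assemble the corollary from the local existence result (Proposition \ref{Aloc}) together with the chain of uniform-in-$\varepsilon$ a priori estimates established in Propositions \ref{bddq}--\ref{prop.est.dt}. First I would invoke Proposition \ref{Aloc} to produce, for the given initial data, a unique strong solution on a maximal interval of existence $(0,\mathcal T_{\text{max}})$, with the blow-up characterization \eqref{T_max}: if $\mathcal T_{\text{max}}<\infty$ then $\limsup_{\mathcal T\to\mathcal T_{\text{max}}^-}\|(\mathbf{u},T,q_v,q_c,q_r)\|_{H^1(\mathcal M)}=\infty$. The entire content of the corollary is then the claim that this scenario cannot occur, i.e.\ $\mathcal T_{\text{max}}=\infty$, and that all the listed norms are controlled on every finite time interval by a constant independent of $\varepsilon\in(0,1)$.

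Next I would argue by contradiction: suppose $\mathcal T_{\text{max}}<\infty$. Under Assumption \ref{ass} the propositions of this section apply on $[0,\mathcal T_{\text{max}})$. From Proposition \ref{bddq} we have nonnegativity and the $L^\infty$ bounds on $(T,q_v,q_c,q_r)$, hence the source-term bound \eqref{bddsource}. Proposition \ref{H1q} and Proposition \ref{basic} give the basic energy and $L^2(H^1)$ estimates on the moisture components and on $\mathbf u$; Proposition \ref{L2T} upgrades this to the $L^\infty(L^2)\cap L^2(H^1)$ bound on $T$. Then Proposition \ref{est.u.pu} supplies $\sup_t(\|\mathbf u\|_{L^6}^6+\|\partial_p\mathbf u\|^2)$ together with the $L^2$-in-time control of $\partial_p\nabla_h\mathbf u$ and $\partial_p^2\mathbf u$; Proposition \ref{prop.est.pq} gives the same for $\partial_p q_j$; Proposition \ref{prop.est.hu} for $\nabla_h\mathbf u$ and $\nabla_h q_j$ (with $\Delta_h\mathbf u$, $\Delta_h q_j$, $\nabla_h\partial_p q_j$ in $L^2_t L^2_x$); and Proposition \ref{prop.est.dt} for $\nabla_h T$ and $\partial_p T$ (with $\nabla_h^2 T$, $\nabla_h\partial_p T$, $\partial_p^2 T$ in $L^2_tL^2_x$). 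Every one of the bounding functions $K_0,\dots,K_6$ is continuous on $[0,\infty)$ and independent of $\varepsilon$, so in particular each is finite on the compact interval $[0,\mathcal T_{\text{max}}]$. Summing these, $\sup_{0\leq t<\mathcal T_{\text{max}}}\|(\mathbf u,T,q_v,q_c,q_r)\|_{H^1(\mathcal M)}^2\leq K_1(\mathcal T_{\text{max}})+\cdots+K_6(\mathcal T_{\text{max}})<\infty$, which contradicts \eqref{T_max}. Hence $\mathcal T_{\text{max}}=\infty$, the solution is global, and the stated a priori estimates hold on every $[0,\mathcal T]$, $\mathcal T<\infty$.

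It then remains to record the claimed regularity. The $L^\infty(0,\mathcal T;L^\infty)$ statement is Proposition \ref{bddq}. The bound $\mathbf u,T,q_v,q_c,q_r\in L^\infty(0,\mathcal T;H^1(\mathcal M))\cap L^2(0,\mathcal T;H^2(\mathcal M))$ follows by combining the $\sup_t H^1$ controls with the $L^2_t$ controls of all second-order spatial derivatives from Propositions \ref{prop.est.pq}--\ref{prop.est.dt}, together with the elliptic estimates \eqref{elliptic} to pass from $\Delta_h$ to full second horizontal derivatives; the continuity in time, $C([0,\mathcal T];H^1(\mathcal M))$, then comes from the Aubin--Lions lemma applied with these bounds plus the $L^2_t L^2_x$ bound on the time derivatives. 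Finally $\partial_t\mathbf u,\partial_t T,\partial_t q_j\in L^2(0,\mathcal T;L^2(\mathcal M))$ is obtained by reading off $\partial_t$ from the equations \eqref{Au.1}, \eqref{AT}, \eqref{Aqv}--\eqref{Aqr}: each right-hand side is, by the estimates just collected and the source bound \eqref{bddsource}, controlled in $L^2_t L^2_x$ — the transport terms $\mathbf u\cdot\nabla_h(\cdot)$ and $\omega\partial_p(\cdot)$ by an $L^\infty_t L^6_x\times L^2_t L^3_x$ (or Ladyzhenskaya-type) splitting using Proposition \ref{est.u.pu} and the $H^2$-in-time-$L^2$ bounds, the diffusion terms directly from the $L^2_t H^2_x$ bounds, and the antidissipative and geopotential terms from the $L^\infty$ and $L^2_t H^1_x$ bounds. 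The main (and essentially only) obstacle is bookkeeping: one must verify that the propositions are genuinely chained in a non-circular order — $L^\infty$ bounds $\Rightarrow$ energy/$L^2H^1$ $\Rightarrow$ $L^2$ bound on $T$ $\Rightarrow$ $L^6$ and $\partial_p$ bounds on $\mathbf u$ $\Rightarrow$ $\partial_p$ bounds on $q_j$ $\Rightarrow$ horizontal bounds on $\mathbf u,q_j$ $\Rightarrow$ gradient bounds on $T$ — and that at no stage is the $\varepsilon$-independence lost, which is exactly what the "continuous, independent of $\varepsilon$" phrasing in each proposition guarantees.
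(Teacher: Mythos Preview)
Your proposal is correct and follows essentially the same route as the paper's own proof: assume $\mathcal T_{\text{max}}<\infty$, combine the $\varepsilon$-independent continuous bounds $K_0,\dots,K_6$ from Propositions \ref{bddq}--\ref{prop.est.dt} to get a uniform $H^1$ bound contradicting \eqref{T_max}, and then read off the remaining regularity and a priori estimates (the paper simply remarks that the time-derivative bounds follow from the equations (\ref{Au.1}), (\ref{AT})--(\ref{Aqr}) by a ``lengthy but standard'' argument, for which you have sketched the relevant $L^\infty_tL^6_x\times L^2_tL^3_x$ splitting). The only minor difference is that you invoke Aubin--Lions for $C([0,\mathcal T];H^1)$, whereas in the paper this continuity is already inherited from the local solution in Proposition \ref{Aloc} and its continuation.
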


\begin{proof}
We need to prove $\mathcal T_{\text{max}}=\infty$. Assume, by contradiction, that $\mathcal T_{\text{max}}<\infty$. By Propositions \ref{bddq}--\ref{prop.est.dt}, we have the estimate
\begin{align*}
  \sup_{0\leq t\leq\mathcal T}
  \|(\mathbf{u}, T,q_v, q_c, q_r)\|_{H^1(\mathcal M)}\leq C_0,
\end{align*}
for any $\mathcal T\in(0,\mathcal T_{\text{max}})$, and $C_0$ is a positive constant depending on the initial and boundary data, but which is independent of
$\mathcal T\in(0,\mathcal T_{\text{max}})$. This contradicts (\ref{T_max}) and, thus,
$\T_{\text{max}}=\infty$. The a priori estimates except those involving the time derivatives follow
directly from Propositions \ref{bddq}--\ref{prop.est.dt}; while the desired estimates for the time
derivative follow from those in Propositions \ref{bddq}--\ref{prop.est.dt}, using equations
(\ref{Au.1}), (\ref{AT})--(\ref{Aqr}). The proof is lengthy but standard, and therefore omitted. \end{proof}

\section{Global existence and uniqueness}
\label{sec.pf}

We are now ready to prove the global existence and uniqueness result, i.e., Theorem \ref{thm}.

\begin{proof}[Proof of Theorem \ref{thm}]
\textbf{(i) Existence. }By Corollary \ref{COR} for any $\varepsilon\in(0,1)$, there is a unique
global solution $(\mathbf{u}_\varepsilon, T_\varepsilon, q_{v\varepsilon}, q_{c\varepsilon}, q_{r\varepsilon})$ satisfying
\begin{equation}
  T_\varepsilon, q_{v\varepsilon}, q_{c\varepsilon}, q_{r\varepsilon}\geq0, \label{AST0}
\end{equation}
and the
a priori estimates
 \begin{equation}
  \sup_{0\leq t\leq\mathcal T}(\|(T_\varepsilon,q_{v\varepsilon}, q_{c\varepsilon}, q_{r\varepsilon})\|_{L^\infty(\mathcal M)}+
  \|(\mathbf{u}_\varepsilon, T_\varepsilon, q_{v\varepsilon}, q_{c\varepsilon}, q_{r\varepsilon})\|_{H^1(\mathcal M)})\leq
  K(\mathcal T),\label{AST1}
 \end{equation}
 and
 \begin{equation}
  \int_0^\mathcal T(\|(\mathbf{u}_\varepsilon, T_\varepsilon, q_{v\varepsilon}, q_{c\varepsilon}, q_{r\varepsilon})
  \|_{H^2(\mathcal
  M)}^2+\|(\partial_t\mathbf{u_\varepsilon},
  \partial_tT_\varepsilon,\partial_tq_{v\varepsilon},
  \partial_tq_{c\varepsilon},\partial_tq_{r\varepsilon})\|^2)dt\leq
  K(\mathcal T),\label{AST2}
\end{equation}
for any $\mathcal T\in(0,\infty)$, for a continuous function $K$ on $[0,\infty)$ independent of $\varepsilon$. Thanks to the a priori estimates (\ref{AST1}) and (\ref{AST2}), by the Banach-Alaoglu theorem,
and using Cantor's diagonal argument to $\varepsilon$, there is a subsequence (still denoted by $(\mathbf{u}_\varepsilon, T_\varepsilon, q_{v\varepsilon}, q_{c\varepsilon}, q_{r\varepsilon})$), and $(\mathbf{u}, T, q_v, q_c, q_r)$, such that
\begin{eqnarray}
  &&(\mathbf{u}_\varepsilon, T_\varepsilon, q_{v\varepsilon}, q_{c\varepsilon}, q_{r\varepsilon})\rightharpoonup^*(\mathbf{u}, T, q_v, q_c, q_r)\quad\mbox{in }L^\infty(0,\mathcal T; H^1(\mathcal M)), \label{lim1}\\
  &&(\mathbf{u}_\varepsilon, T_\varepsilon, q_{v\varepsilon}, q_{c\varepsilon}, q_{r\varepsilon})\rightharpoonup(\mathbf{u}, T, q_v, q_c, q_r)\quad\mbox{in }L^2(0,\mathcal T; H^2(\mathcal M)),\label{lim2}
\end{eqnarray}
and
\begin{eqnarray}
  (\partial_t\mathbf{u}_\varepsilon, \partial_tT_\varepsilon, q_{v\varepsilon}, \partial_tq_{c\varepsilon}, \partial_tq_{r\varepsilon})\rightharpoonup(\partial_t\mathbf{u},\partial_t
   T,\partial_t q_v,\partial_t q_c,\partial_t q_r)\quad\mbox{in }L^2(0,\mathcal T; L^2(\mathcal M)),\label{lim3}
\end{eqnarray}
where $\rightharpoonup$ and $\rightharpoonup^*$, respectively, denote the weak and weak-* convergences in the corresponding spaces. By the Aubin-Lions Compactness Lemma (see e.g. \cite{Si}), it follows from (\ref{lim1})--(\ref{lim3}) that
\begin{equation}
  (\mathbf{u}_\varepsilon, T_\varepsilon, q_{v\varepsilon}, q_{c\varepsilon}, q_{r\varepsilon})\rightarrow(\mathbf{u}, T, q_v, q_c, q_r)\quad\mbox{in }C([0,\mathcal T]; L^2(\mathcal M))\cap L^2(0,\mathcal T; H^1(\mathcal M)), \label{lim4}
\end{equation}
from which, recalling (\ref{AST0}) and (\ref{AST1}), one has
\begin{equation}
  (T,q_v, q_c, q_r)\in L^\infty(0,\T; L^\infty(\mathcal M)),\quad T, q_v, q_c, q_r\geq0.
\end{equation}
Thanks to the convergences (\ref{lim1})--(\ref{lim4}), we can take the limit as $\varepsilon\rightarrow0$ to show that $(\mathbf{u}, T, q_v, q_c, q_r)$ is a solution to system \eqref{eq.qv}--\eqref{eq.T}, \eqref{eq.u}--\eqref{eq.Phi}, subject to (\ref{bound.0})--(\ref{IC}).

\textbf{(ii) Uniqueness. }Let $(\mathbf{u}_i, T_i, q_{vi}, q_{ci}, q_{ri})$, $i=1,2$, be two solutions, and denote
by $(\mathbf{u}, T, q_v, q_c, q_r)$ their difference. Then $\mathbf{u}$ satisfies
\begin{eqnarray*}
  \partial_t\mathbf{u}+(\mathbf{u_1}\cdot\nabla_h)\mathbf{u}+\omega_1\partial_p\mathbf{u}
  +f(k\times\mathbf{v})_h-\mathcal D^{\mathbf{u}}\mathbf{u}
  =-(\mathbf{u}\cdot\nabla_h)\mathbf{u}_2-\omega\partial_p\mathbf{u}_2-\nabla_h \Phi,
\end{eqnarray*}
where $\Phi=\Phi_1-\Phi_2$.
Multiplying the above equation by $\mathbf{u}$ and integrating over $\mathcal M$ yields
\begin{eqnarray}
  \frac12\frac{d}{dt}\|\mathbf{u}\|^2-\int_\mathcal M\mathcal D^{\mathbf{u}}\mathbf{u}\cdot\mathbf{u}d\mathcal M=-\int_\mathcal M[(\mathbf{u}\cdot\nabla_h)\mathbf{u}_2+\omega\partial_p\mathbf{u}_2+\nabla_h\Phi]
  \cdot\mathbf{u}d\mathcal M.\label{uni.u.0}
\end{eqnarray}
As in (\ref{egyu.2}), we have
\begin{equation}
  -\int_\mathcal M\mathcal D^{\mathbf{u}}\mathbf{u}\cdot\mathbf{u}d\mathcal M
  \geq\mu_{\mathbf{u}}\|\nabla_h\mathbf{u}\|^2+\nu_{\mathbf{u}}\|\partial_pu\|_w^2.
  \label{uni.u.1}
\end{equation}
By (\ref{eq.Phi}),  the boundary condition (\ref{bound.ll}) for $\mathbf{u}$,
and noticing that $\int_{p_1}^{p_0}\nabla_h\cdot\nabla\mathbf{u}dp=0$, we obtain from integration by parts
\beq
-\int_\mathcal M\nabla_h\Phi\cdot\mathbf{u}d\mathcal M&=&\int_\mathcal M\Phi\nabla_h\cdot\mathbf{u}d\mathcal M
  =\int_p^{p_0}\frac{ R T}{\sigma}d\sigma\nabla_h\cdot\mathbf{u}d\mathcal M\nonumber\\
  &\leq& C\|\nabla_h\mathbf{u}\|\|T\| \leq \frac{\mu_{\mathbf{u}}}{6}\|\nabla_h\mathbf{u}\|^2+C
  \|T\|^2.\label{uni.u.2}
\eeq
Noticing that $|f|\leq\frac{1}{p_0-p_1}\int_{p_1}^{p_0}|f|dp+\int_{p_1}^{p_0}|\partial_pf|dp$, it follows from Lemma \ref{lemlad} (see Appendix) that
\begin{eqnarray}
  \int_\mathcal M(\mathbf{u}\cdot\nabla_h)\mathbf{u}_2\cdot\mathbf{u}d\mathcal M
  &\leq& C\int_{\mathcal M'}\int_{p_1}^{p_0}(|\nabla_h\mathbf{u}_2|+|\nabla_h\partial_p\mathbf{u}_2|)
  dp\int_{p_1}^{p_0}|\mathbf{u}|^2d\mathcal M'\nonumber\\
  &\leq& C\|(\nabla_h\mathbf{u}_2,\nabla_h\partial_p\mathbf{u}_2)\|
  \|\mathbf{u}\|\|(\mathbf{u},\nabla_h\mathbf{u})\|\nonumber\\
  &\leq&\frac{\mu_{\mathbf{u}}}{6}\|\nabla_h\mathbf{u}\|^2
  +C(1+\|(\nabla_h\mathbf{u}_2,\nabla_h\partial_p\mathbf{u}_2)\|^2)
  \|\mathbf{u}\|^2.\label{uni.u.3}
\end{eqnarray}
Again by Lemma \ref{lemlad} and Young's inequality, we have moreover
\begin{eqnarray}
  -\int_\mathcal M\omega\partial_p\mathbf{u}_2\mathbf{u}d\mathcal M&\leq&\int_{\mathcal M'}
  \int_{p_1}^{p_0}|\nabla_h\mathbf{u}|dp\int_{p_1}^{p_0}|
  \partial_p\mathbf{u}_2||\mathbf{u}|dpd\mathcal M'\nonumber\\
  &\leq& C\|\nabla_h\mathbf{u}\|\|\partial_p\mathbf{u}_2\|^{\frac12}
  \|(\partial_p\mathbf{u_2},\nabla_h\partial_p\mathbf{u}_2)\|^{\frac12}
  \|\mathbf{u}\|^{\frac12}
  \|(\mathbf{u},\nabla_h\mathbf{u})
  \|^{\frac12} \nonumber\\
  &\leq&\frac{\mu_{\mathbf{u}}}{6}\|\nabla_h\mathbf{u}\|^2
  +C\left(1+\|\partial_p\mathbf{u}_2\|^2
  \|\left(\partial_p\mathbf{u}_2,\nabla_h\partial_p
  \mathbf{u}_2\right)\|^2\right)\|\mathbf{u}\|^2.
  \label{uni.u.4}
\end{eqnarray}
Substituting (\ref{uni.u.1})--(\ref{uni.u.4}) into (\ref{uni.u.0}) and
using the regularities of $(\mathbf{u}_i, T_i, q_{vi}, q_{ci}, q_{ri})$, we obtain
\begin{eqnarray}
  \frac{d}{dt}\|\mathbf{u}\|^2 +\mu_{\mathbf{u}}\|\nabla_h\mathbf{u}\|^2+\nu_{\mathbf{u}}
  \|\partial_p\mathbf{u}\|_w^2\leq C(1+\|
  \nabla_h\partial_p\mathbf{u}_2\|^2) \left(\|\mathbf{u}\|^2+\|T\|^2\right).\label{uni.u}
\end{eqnarray}

For the estimates of the differences for the temperature and moisture  components, we proceed in a similar fashion to \cite{HKLT} by introducing the new unknowns as
\[Q_i=q_{vi}+q_{ri}, \quad H_i=T_i-\frac{L}{c_p}(q_{ci}+q_{ri}),\]
$i=1,2$, and let $Q=Q_1-Q_2, H=H_1-H_2$ and $q_j=q_{j1}-q_{j2}$ for $j\in \{v,c,r\}$ be the corresponding differences. The source terms for these quantities reveal helpful cancellation properties allowing to prove uniqueness of $Q,q_c,q_r,H$, which then further implies the uniqueness of the solution in terms of the original unknowns  $T, q_v, q_c, q_r$.  Following the
argument of Proposition 4.1 in \cite{HKLT}, we have the estimates
\begin{eqnarray}
&&\frac12\frac{d}{dt}\|Q\|^2+\frac{\mu_{q_v}}{4}
\|\nabla_h Q\|^2+\frac{\nu_{q_{v}}}{4}\|\pa_pQ\|_w^2\nonumber \\
&&\quad\leq\int_{\mathcal M}(\mathbf{u}\cdot\nabla_h Q_2+\omega\partial_pQ_2)Qd\mathcal M+C\|(Q,q_{r},q_{c},H)\|^2
\nonumber\\
&&\qquad +C_Q \left(\mu_{q_r}\|\nabla_h  q_{r}\|^2 + \nu_{q_r}
\|\pa_p q_r\|_w^2\right),\label{est.Q.uni}\\
&&\frac12\frac{d}{dt}\|q_{c}\|^2 +\mu_{q_c}\|\nabla_h
q_{c}\|^2+\nu_{q_c}\|\pa_pq_{c}\|_w^2\nonumber\\
&&\quad\leq \int_{\mathcal M}(\mathbf{u}\cdot\nabla_h q_{c2}+\omega\partial_pq_{c2})q_cd\mathcal M+C\|(Q,q_{r},q_{c},H)\|^2,\label{est.qc.uni}\\
&&\frac12\frac{d}{dt}\|q_{r}\|^2 + \frac{\mu_{q_r}}
{4}\|\nabla_h q_{r}\|^2+\frac{\nu_{q_r}}{4}\|\pa_pq_{r}\|_w^2\nonumber \\
&&\quad\leq \int_{\mathcal M}(\mathbf{u}\cdot\nabla_h q_{r2}+\omega\partial_pq_{r2})q_rd\mathcal M+C\|(Q,q_{r},q_{c},H)\|^2,\label{est.qr.uni}\\
&&\frac12\frac{d}{dt}\|H\|^2+\frac{\mu_T}{2}
\|\nabla_hH\|^2+\frac{\nu_T}{2}\|\partial_pH\|^2_w\nonumber\\
&&\quad \leq\int_{\mathcal M}(\mathbf{u}\cdot\nabla_h H_2+\omega\partial_pH_2)Hd\mathcal M+C_H(\mu_{q_c}\|\nabla_hq_{c}\|^2+\mu_{q_r}\|\nabla_hq_{r}\|^2)\nonumber\\
&&\qquad+C_H(\nu_{q_c}\|\partial_pq_c\|^2+\nu_{q_r}\|\partial_pq_r\|^2)
+C\|(H,q_r,q_c)\|^2.\label{est.H.uni}
\end{eqnarray}
For the integrals involving the convection terms in the above inequalities, thanks to the boundary conditions (\ref{bound.0})--(\ref{bound.ll}) for $\mathbf{u}$ and $\omega$, it follows from integration by parts that
\begin{align}
  \int_\mathcal M(\mathbf{u}\cdot\nabla\varphi_2&+\omega\partial_p\varphi_2)\varphi d\mathcal M
  =-\int_\mathcal M(\mathbf{u}\cdot\nabla_h\varphi+\omega\varphi)\varphi_2d\mathcal M
  \nonumber\\
  \leq&\ \eta(\|\nabla_h\mathbf{u}\|^2+\|\nabla_h\varphi\|^2)+C_\eta
  \|\varphi_2\|_{L^\infty}^2(\|\mathbf{u}\|^2+\|\varphi\|^2)\nonumber\\
  \leq&\ \eta(\|\nabla_h\mathbf{u}\|^2+\|\nabla_h\varphi\|^2)+C_\eta
 (\|\mathbf{u}\|^2+\|\varphi\|^2),
  \label{con.uni}
\end{align}
for $\varphi\in\{Q, q_c, q_r, H\}$, and for any positive $\eta$, where
in the last step we have used the uniform boundedness of the moisture components.
Multiplying (\ref{est.qc.uni}) and (\ref{est.qr.uni}) by a sufficient large positive number $A$, adding the resultants with (\ref{est.qc.uni}) and (\ref{est.qr.uni}), and using (\ref{con.uni}), with $\eta$ sufficiently small, we obtain
\begin{eqnarray}
  \frac{d}{dt}\left(\|(Q,H)\|^2+A\|(q_r,q_c)\|^2\right)
  \leq4\eta \|\nabla_h\mathbf{u}\|^2+C_\eta \|(\mathbf{u},Q,q_{r},q_{c},H)\|^2, \label{uni.Q}
\end{eqnarray}
for a sufficiently small positive $\eta$.

Combining (\ref{uni.u}) with (\ref{uni.Q}) and choosing $\eta$ sufficiently small, one obtains
\begin{eqnarray*}
 &&\frac{d}{dt}\left(\|(Q,H)\|^2+A\|(q_r,q_c)\|^2+\|\mathbf{u}\|^2\right)\\
 &\leq& C (1+\|
  \nabla_h\partial_p\mathbf{u}_2\|^2) \|(\mathbf{u},Q,q_{r},q_{c},H)\|^2\,.
\end{eqnarray*}
Since due to the regularities of the solution we have $
 \| \nabla_h\partial_p\mathbf{u}_2\|^2\in L^1((0,\mathcal T)) $
for any $\mathcal T\in(0,\infty)$,
 the conclusion follows using Gronwall's inequality.
\end{proof}

\noindent\textbf{Acknowledgments.} S.H. acknowledges support by the Austrian Science Fund via the Hertha-Firnberg project T-764. R.K.\ acknowledges support by Deutsche Forschungsgemeinschaft through
Grant CRC 1114 ``Scaling Cascades in Complex Systems'', projects A02 and C06.
J.L. was supported in part by the National Natural Science Foundation of China grants 11971009, 11871005, and 11771156, by the Natural Science Foundation of Guangdong Province
grant 2019A1515011621, by the South China Normal University start-up grant 550-8S0315, and by the Hong Kong RGC grant CUHK 14302917. E.S.T. was supported in part by the Einstein Stiftung/Foundation - Berlin, through the Einstein Visiting Fellow Program,
and by the John Simon Guggenheim Memorial Foundation.

\section{Appendix}

\begin{lemma}\label{lem}
For any measurable function $f$ satisfying $f,\pa_p f \in L^1(\mathcal M)$ the following estimate holds
$$
\sup_{p_1\leq p\leq p_0}\|f\|_{L^1(\mathcal M')}\leq\frac{\|f\|_{L^1(\mathcal M)}}{p_0-p_1}+\|\partial_pf\|_{L^1(\mathcal M)}.
$$
\end{lemma}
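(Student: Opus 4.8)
The plan is to use the one–dimensional fundamental theorem of calculus in the vertical variable together with an averaging trick. Since $\partial_p f\in L^1(\M)$, Fubini's theorem gives that for almost every $(x,y)\in\M'$ the slice $f(x,y,\cdot)$ belongs to $W^{1,1}(p_1,p_0)$ and hence admits an absolutely continuous representative on $[p_1,p_0]$; in particular, for every $p,q\in[p_1,p_0]$ one has the identity
$$
f(x,y,p)=f(x,y,q)+\int_q^p\partial_pf(x,y,\xi)\,d\xi .
$$

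Next I would take absolute values, use the trivial bound $\left|\int_q^p\partial_pf(x,y,\xi)\,d\xi\right|\le\int_{p_1}^{p_0}|\partial_pf(x,y,\xi)|\,d\xi$, and integrate over $(x,y)\in\M'$, obtaining for all $p,q\in[p_1,p_0]$
$$
\int_{\M'}|f(x,y,p)|\,d\M'\le\int_{\M'}|f(x,y,q)|\,d\M'+\|\partial_pf\|_{L^1(\M)} .
$$
Now I average this inequality in $q$ over $[p_1,p_0]$: the left-hand side is independent of $q$, while $\frac{1}{p_0-p_1}\int_{p_1}^{p_0}\!\!\int_{\M'}|f(x,y,q)|\,d\M'\,dq=\frac{\|f\|_{L^1(\M)}}{p_0-p_1}$ by Fubini. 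This yields
$$
\int_{\M'}|f(x,y,p)|\,d\M'\le\frac{\|f\|_{L^1(\M)}}{p_0-p_1}+\|\partial_pf\|_{L^1(\M)}
$$
for every $p\in[p_1,p_0]$, and taking the supremum over $p$ finishes the proof.

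There is essentially no obstacle here; the only point requiring a word of care is the justification that $f$ has an absolutely continuous vertical representative so that the fundamental theorem of calculus is legitimate, which follows from $f,\partial_pf\in L^1(\M)$ via Fubini, and the interchange of the order of integration when averaging in $q$, which is again Fubini applied to the nonnegative integrand $|\partial_pf|$ and to $|f|$. All remaining steps are elementary estimates.
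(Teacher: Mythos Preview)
Your proof is correct and follows essentially the same approach as the paper: use the fundamental theorem of calculus in the vertical variable to relate $f(x,y,p)$ to $f(x,y,q)$, average in $q$ over $[p_1,p_0]$, and take the supremum in $p$. The only cosmetic difference is that the paper applies the identity to $|f|$ directly (using $\partial_p|f|$), whereas you apply it to $f$ and then take absolute values; both routes lead to the same estimate.
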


\begin{proof}
 For any $p, q\in[p_1,p_0]$
\begin{equation*}
  \int_{\mathcal M'}|f(x,y,p)|d\mathcal M'=\int_{\mathcal M'}|f(x,y,q)|d\mathcal M'+\int_q^p \int_{\mathcal M'}\partial_p|f(x,y,\xi)|d\mathcal M'd\xi.
\end{equation*}
Integrating the above equality with respect to $q$ over $(p_1, p_0)$ yields
\begin{eqnarray*}
\int_{\mathcal M'}|f(x,y,p)|d\mathcal M'&=&\frac{1}{p_0-p_1}
\int_{p_1}^{p_0}\left(\int_{\mathcal M'}|f|d\mathcal M'+\int_q^p\int_{\mathcal M'}\partial_p|f(x,y,\xi)|d\mathcal M'd\xi\right)dq\nonumber\\
&\leq&\frac{\|f\|_{L^1(\mathcal M)}}{p_0-p_1}+\|\partial_pf\|_{L^1(\mathcal M)},
\end{eqnarray*}
from which, by taking the superium in $p$, the conclusion follows.
\end{proof}

We will also use the following lemma from \cite{CLT}, where we also refer to \cite{CT03,CT13} for some similar inequalities.

\begin{lemma}[See Lemma 2.1 in \cite{CLT}]
\label{lemlad}
The following inequalities hold
  \begin{eqnarray*}
    &&\int_{\mathcal M'}\left(\int_{p_1}^{p_0}|\phi|dp\right)
    \left(\int_{p_1}^{p_0}|\varphi\psi|dp\right)d\mathcal M'\\
    &\leq& C\|\phi\|\|\varphi\|^{\frac12}\left(\|\varphi\|^{\frac12}+\|\nabla_h\varphi
    \|^{\frac12}\right)
    \|\psi\|^{\frac12}\left(\|\psi\|^{\frac12}+\|\nabla_h\psi\|^{\frac12}\right),
    \end{eqnarray*}
    and
    \begin{eqnarray*}
    &&\int_{\mathcal M'}\left(\int_{p_1}^{p_0}|\phi|dp\right)\left(\int_{p_1}^{p_0}|\varphi\psi|dp\right)
    d\mathcal M'\\
    &\leq& C\|\phi\|^{\frac12}\left(\|\phi\|^{\frac12}+\|\nabla_h\phi
    \|^{\frac12}\right)\|\varphi\|^{\frac12}\left(\|\varphi\|^{\frac12}+\|\nabla_h\varphi
    \|^{\frac12}\right)\|\psi\|,
  \end{eqnarray*}
for any measurable functions $\phi,\varphi$, and $\psi$ such the quantities on the right-hand
sides are finite, where $C$ is a positive constant depending only on $p_1, p_0$, and $\mathcal M'$.
\end{lemma}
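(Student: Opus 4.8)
The plan is to prove both inequalities by the same two-step reduction: first collapse the vertical variable $p$ so as to pass from the three-dimensional functions on $\mathcal M$ to two-dimensional functions on $\mathcal M'$, and then apply the two-dimensional Ladyzhenskaya (Gagliardo--Nirenberg) inequality on the planar domain $\mathcal M'$. To this end I introduce the vertical $L^2$-profiles
\[
\tilde\varphi(x,y)=\Big(\int_{p_1}^{p_0}|\varphi|^2dp\Big)^{1/2},\qquad \tilde\psi(x,y)=\Big(\int_{p_1}^{p_0}|\psi|^2dp\Big)^{1/2},
\]
and similarly $\tilde\phi$. By the Cauchy--Schwarz inequality in $p$ one has the pointwise bounds $\int_{p_1}^{p_0}|\varphi\psi|dp\le \tilde\varphi\,\tilde\psi$ and $\int_{p_1}^{p_0}|\phi|dp\le (p_0-p_1)^{1/2}\tilde\phi$ on $\mathcal M'$. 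Moreover, by Fubini $\|\tilde\varphi\|_{L^2(\mathcal M')}=\|\varphi\|$, and from $|\nabla_h\tilde\varphi|\le\big(\int_{p_1}^{p_0}|\nabla_h\varphi|^2dp\big)^{1/2}$ (which follows by differentiating the square root and applying Cauchy--Schwarz in $p$ componentwise) one gets $\|\nabla_h\tilde\varphi\|_{L^2(\mathcal M')}\le\|\nabla_h\varphi\|$; the same bounds hold for $\tilde\psi$ and $\tilde\phi$.

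The key analytic ingredient is the two-dimensional interpolation inequality $\|g\|_{L^4(\mathcal M')}^2\le C\|g\|_{L^2(\mathcal M')}\big(\|g\|_{L^2(\mathcal M')}+\|\nabla_h g\|_{L^2(\mathcal M')}\big)$, valid on the smooth bounded planar domain $\mathcal M'$. Combined with the preceding bounds and the elementary estimate $\sqrt{x+y}\le\sqrt x+\sqrt y$ it yields $\|\tilde\varphi\|_{L^4(\mathcal M')}\le C\|\varphi\|^{1/2}\big(\|\varphi\|^{1/2}+\|\nabla_h\varphi\|^{1/2}\big)$, and likewise for $\tilde\psi$ and $\tilde\phi$. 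Setting $F:=\int_{p_1}^{p_0}|\phi|dp$, the same two bounds give $\|F\|_{L^2(\mathcal M')}\le C\|\phi\|$ (via $F\le(p_0-p_1)^{1/2}\tilde\phi$) and $\|F\|_{L^4(\mathcal M')}\le C\|\phi\|^{1/2}\big(\|\phi\|^{1/2}+\|\nabla_h\phi\|^{1/2}\big)$, where for the latter I use $\|\nabla_h F\|_{L^2(\mathcal M')}\le(p_0-p_1)^{1/2}\|\nabla_h\phi\|$.

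With these estimates both inequalities reduce to a single Hölder application on $\mathcal M'$ applied to $\int_{\mathcal M'}F\,\tilde\varphi\,\tilde\psi\,d\mathcal M'$, which dominates both left-hand sides. For the first inequality I use Hölder with exponents $(2,4,4)$, estimating $\|F\|_{L^2(\mathcal M')}\le C\|\phi\|$ and bounding the two $L^4$-factors as above; this reproduces the factor $\|\phi\|$ with no gradient and the full interpolation factors in $\varphi$ and $\psi$. For the second inequality I instead use Hölder with exponents $(4,4,2)$, now invoking $\|F\|_{L^4(\mathcal M')}$ and $\|\tilde\varphi\|_{L^4(\mathcal M')}$ together with $\|\tilde\psi\|_{L^2(\mathcal M')}=\|\psi\|$, which places the interpolation on $\phi$ and $\varphi$ and leaves $\psi$ with only its $L^2$-norm. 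In both cases the resulting constant depends only on $p_0-p_1$ and on the Ladyzhenskaya constant of $\mathcal M'$, as asserted.

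The routine parts are the Fubini identities and the Hölder bookkeeping; the only point requiring genuine care is the chain-rule estimate $\|\nabla_h\tilde\varphi\|_{L^2(\mathcal M')}\le\|\nabla_h\varphi\|$ for the non-smooth function $\tilde\varphi=(\int_{p_1}^{p_0}|\varphi|^2dp)^{1/2}$, which I would justify either by a density argument or by regularizing $\tilde\varphi$ to $(\int_{p_1}^{p_0}|\varphi|^2dp+\delta)^{1/2}$ and letting $\delta\downarrow0$. This, together with the availability of the sharp two-dimensional $L^4$--$H^1$ interpolation on $\mathcal M'$, is the crux of the argument; everything else is elementary.
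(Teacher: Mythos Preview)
Your argument is correct: reducing to two-dimensional functions on $\mathcal M'$ via the vertical $L^2$-profiles $\tilde\varphi,\tilde\psi,\tilde\phi$, invoking the 2D Ladyzhenskaya inequality $\|g\|_{L^4(\mathcal M')}^2\le C\|g\|_{L^2}\big(\|g\|_{L^2}+\|\nabla_h g\|_{L^2}\big)$, and then applying H\"older with exponents $(2,4,4)$ respectively $(4,4,2)$ is exactly the intended route, and your handling of $\|\nabla_h\tilde\varphi\|_{L^2(\mathcal M')}\le\|\nabla_h\varphi\|$ by regularization is the standard justification.

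Note, however, that the present paper does \emph{not} prove this lemma at all: it is quoted verbatim from Cao--Li--Titi \cite{CLT} (Lemma~2.1 there) and stated without proof in the Appendix. Your write-up reproduces essentially the argument of that reference, so there is nothing to compare against within this paper itself.
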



\begin{thebibliography}{99}
\bibitem{A}
Alikakos, N.~D.: \emph{An application of the invariance principle to to reaction diffusion equations,} J. Diff. Equ., \bf33 \rm(1979), 201--225.  (1979)

\bibitem{B}
Bannon, P.~R.: \emph{Theoretical Foundations for Models of Moist Convection,} JAS, \bf59 \rm(2002), 1967--1982.

\bibitem{BCZ}
Bousquet, A.; Coti~Zelati, M.; Temam, R.: \emph{Phase transition models in atmospheric dynamics}, Milan Journal of Mathematics,
\bf82 \rm(2014), 99--128.

\bibitem{CLT}
Cao, C.; Li, J.; Titi, E. S.: \emph{Global well posedness for the 3D primitive equations with only horizontal viscosity and diffusion,} Communications in Pure and Applied Mathematics,
\bf69 \rm(2016), 1492--1531.

\bibitem{CLT1}
Cao, C.; Li, J.; Titi, E. S.: \emph{Global well-posedness of strong solutions to the 3D primitive equations with horizontal eddy diffusivity}, Journal of Differential Equations,
\bf257 \rm(2014), 4108--4132.

\bibitem{CLT2}
Cao, C.; Li, J.; Titi, E. S.: \emph{Local and global well-posedness of strong solutions to the 3D primitive equations with vertical eddy diffusivity,} Archive of Analysis and
Rational Mechanics, \bf214 \rm(2014), 35--76.

\bibitem{CT03}
Cao, C.; Titi, E. S.: \emph{Global well-posedness and finite-dimensional global attractor for a 3-D
planetary geostrophic viscous model}, Comm. Pure Appl. Math., \bf56 \rm(2003), 198--233.

\bibitem{CT}
Cao, C.; Titi, E. S.: \emph{Global well-posedness of the three-dimensional viscous primitive equations of large scale ocean and atmosphere dynamics}, Annals of Mathematics, \bf166 \rm(2007), 245--267.

\bibitem{CT13}
Cao, C.; Titi, E. S.: \emph{Global well-posedness of the 3D primitive equations with partial vertical turbulence mixing heat diffusion}, Comm. Math. Phys., \bf310 \rm(2012), 537--568.

\bibitem{CZF}
Coti~Zelati, M.; Fr\'emond, M.; ~Temam, R.; Tribbia, J.: \emph{The equations of the atmosphere with humidity and saturation: uniqueness and physical bounds}, Physica D, \bf264 \rm(2013), 49--65.

\bibitem{CZH}
Coti~Zelati, M.; Huang, A.; Kukavica, I.; Temam, R.; Ziane, M.: \emph{The primitive equations of
the atmosphere in presence of vapor saturation,} Nonlinearity, \bf28 \rm(2015), 625--668.

\bibitem{CZT}
Coti~Zelati, M.; Temam, R.: \emph{The atmospheric equation of water vapor with saturation},
Bollettino dell'Unione Matematica Italiana, \bf5 \rm(2012), 309--336.

\bibitem{C}
Cotton, W. R.; Bryan, G.; van den Heever, S. C.:
\emph{Storm and Cloud Dynamics,} Second edition (International Geophysics), Academic Press, (2011)

%
%
%

\bibitem{GS}
Grabowski, W. W.; Smolarkiewicz, P.~K.: \emph{Two-Time-Level Semi-Lagrangian Modeling of Precipitating Clouds.} Monthly Wea. Reviews, \bf124 \rm(1996), 487--497.


\bibitem{HK}
Hittmeir, S.; Klein, R.: \emph{Asymptotics for moist deep convection I: Refined scalings and self-sustaining updrafts}, Theoretical and Computational Fluid Dynamics \bf32(2) \rm(2018), 137--164.

\bibitem{HKLT}
Hittmeir, S.; Klein, R.; Li, J.; Titi, E. S.: \emph{Global well-posedness for passively transported nonlinear moisture dynamics with phase changes}, Nonlinearity, \bf30 \rm(2017), 3676--3718.

\bibitem{HW}
Haltiner, G.~J.; Williams, R.~T.: \emph{Numerical prediction and dynamic meteorology.} John Wiley and Sons, 2nd edition, (1980)



\bibitem{Ke}
Kessler, E.: \emph{On the distribution and continuity of water substance in atmospheric circulations,} Meteorol. Monogr., \bf10(32), \rm(1969)


\bibitem{KM}
Klein, R.; Majda, A.~J.: \emph{Systematic multiscale models for deep convection on mesoscales,} Theoretical and Computational Fluid Dynamics, \bf20 \rm(2006), 525--551.


\bibitem{K}
Kowalczyk, R.: \emph{Preventing blow-up in a chemotaxis model},
J. Math. Anal. Appl., \bf305 \rm(2005), 566--588.



\bibitem{LTW}
Lions, J.L.; Temam, R.; Wang, S.: \emph{New formulations fo the primitive equations of atmosphere and applications,} Nonlinearity \bf5 \rm(1992), 237--288.


\bibitem{PTZ}
Petcu, M.; Temam, R.~M.; Ziane, M.: \emph{Some mathematical problems in geophysical fluid dynamics.} Handbook of Numerical Analysis \bf14 \rm(2009), 577--750

\bibitem{Si} Simon, J.: \emph{Compact Sets in $L^p(0,T;B)$.} Annali di Matematica Pura ed Applicata \bf146(1) \rm(1986), 65--96

%
%
%
\end{thebibliography}
\end{document}